\renewcommand\eqref[1]{(\ref{#1})} 
\numberwithin{equation}{section}
\theoremstyle{plain}
\newtheorem{thm}{Theorem}[section]
\newtheorem{lem}[thm]{Lemma}
\newtheorem{prop}[thm]{Proposition}
\newtheorem{cor}[thm]{Corollary}
\newtheorem{remark}[thm]{Remark}
\newcommand{\mb}[1]{\ensuremath{\mathbb{#1}}}
\newcommand{\R}{\mb{R}}
\newcommand{\C}{\mb{C}}
\newcommand{\Z}{\mb{Z}}
\newcommand{\T}{\mb{T}}
\def\B{{\mathcal B}}
\def\Hcal{{\mathcal H}}
\def\Dcal{{\mathcal D}}
\newcommand{\dd}{ {\rm d}}
\newcommand{\Gh}{ {\widehat{G}}}
\newcommand{\Lap}{\mathcal{L}}
\newcommand\Rn{{\mathbb R}^n}
\newcommand\Tn{{\mathbb T}^n}
\def\sumxi{\sum_{\xi\in\Gh}}
\def\dxi{{d_\xi}}
\def\HS{{\mathtt{HS}}}
\def\whf{\widehat{f}}
\def\whfhs{\|\widehat{f}(\xi)\|_{\HS}}
\def\sumxip{{\sum_{2^{s}\le\jp{\xi}<2^{s+1}}}}
\newcommand{\wt}[1]{\widetilde{#1}}
\newcommand{\jp}[1]{\langle{#1}\rangle}
\newcommand{\p}[1]{\left({#1}\right)}
\newcommand{\Tr}{{\mathrm{Tr}}}
 \newcommand{\supp}{\operatorname{supp}}
 \newcommand{\conv}{\operatorname{conv}}
\title[Besov, Wiener and Beurling spaces]{
Nikolskii inequality and Besov, Triebel-Lizorkin,
Wiener and Beurling spaces on compact homogeneous manifolds}
\author[Erlan Nursultanov]{Erlan Nursultanov}
\address{Erlan Nurlustanov:
\endgraf
Department of Mathematics
\endgraf
Lomonosovs Moscow State University, Kazakh Branch
\endgraf
and
Gumilyov Eurasian National University,
\endgraf
Astana
\endgraf
Kazakhstan
  \endgraf
  {\it E-mail address} {\rm er-nurs@yandex.ru}
}
\author[Michael Ruzhansky]{Michael Ruzhansky}
\address{
  Michael Ruzhansky:
 \endgraf
  Department of Mathematics
  \endgraf
  Imperial College London
  \endgraf
  180 Queen's Gate, London SW7 2AZ
  \endgraf
  United Kingdom
  \endgraf
  {\it E-mail address} {\rm m.ruzhansky@imperial.ac.uk}
  }
 \author[Sergey Tikhonov]{Sergey Tikhonov}
\address{Sergey Tikhonov:
\endgraf
ICREA
and Centre de Recerca Matem\`{a}tica (CRM)  \endgraf
 E-08193, Bellaterra  \endgraf
 Barcelona
  \endgraf
  {\it E-mail address} {\rm stikhonov@crm.cat}
}
\thanks{E.N. was supported by the Ministry of Education and Science of the Republic of Kazakhstan Grants 1412/GF and 1080/GF.
M.R. was supported by the
EPSRC Leadership Fellowship EP/G007233/1 and by EPSRC Grant EP/K039407/1.
S.T. was
partially supported by the MTM2011-27637/MTM, 2009 SGR 1303, RFFI 12-01-00169, NSH 979.2012.1.
}
\date{}
\subjclass[2010]{Primary 35G10; 35L30; Secondary 46F05;}
\keywords{Nikolskii's inequality, Besov spaces,
Triebel-Lizorkin spaces, Wiener spaces, Beurling spaces,
embeddings, interpolation}
\begin{document}

\begin{abstract}
In this paper we prove Nikolskii's inequality on general compact Lie groups
and on compact homogeneous spaces with the constant interpreted in terms
of the eigenvalue counting function of the Laplacian on the space, giving the
best constant for certain indices, attained on the Dirichlet kernel.
Consequently, we establish embedding theorems between Besov spaces
on compact homogeneous spaces, as well as embeddings between Besov
spaces and Wiener and Beurling spaces. We also analyse Triebel--Lizorkin spaces and
$\beta$-versions of Wiener and Beurling spaces and their embeddings,
and interpolation properties of all these spaces.
\end{abstract}

\maketitle

\section{Introduction}

In this paper we analyse the families of Besov, Triebel--Lizorkin, Wiener and Beurling spaces on compact
Lie groups $G$ and on compact homogeneous manifolds $G/K$. To a large extent, the analysis
is based on establishing an
appropriate version of Nikolskii's inequality in this setting and on working with
discrete Lebesgue spaces on the unitary dual of the group and its
class I representations.

The classical Nikolskii inequality for trigonometric polynomials $T_L$ of degree at
most $L$ on the circle was given by (\cite{nikol})
$$
 \|T_L\|_{L^q(\T)} \le C L^{1/p-1/q} \|T_L\|_{L^p(\T)},
$$
where $1\le p<q\le \infty$ and the constant $C$ can be taken as $2$. A similar result is also known (\cite{nikol}) on the Euclidean space for entire functions of exponential type.
Moreover, for $ f\in L^p(\R^n)$ such that $\supp (\widehat{f})$ is compact we have (see \cite{nessel})
\begin{equation}\label{nik-first}
 \|f\|_{L^q(\R^n)} \le \left(C(p) \mu(\conv [\supp (\widehat{f})]) \right)^{1/p-1/q}  \|f\|_{L^p(\R^n)},
\end{equation}
where $1\le p\le q\le \infty$, $\mu(E)$ denotes the Lebesgue measure of $E$, and
$\conv [E]$  denotes the convex hull of $E$.
Inequalities of type (\ref{nik-first}) are sometimes called the Plancherel-Polya-Nikolskii inequality.

The Nikolskii inequality plays a key role in the investigations of properties of different
function spaces (see, e.g., \cite{nikol1, tribel}), in approximation theory
(see, e.g., \cite{devore}), or to obtain embedding theorems (see, e.g., \cite{tribel, ditzian}).

The Nikolskii inequality for the spherical polynomials on $\mathbb{S}^{n}$ was proved in
\cite[Th. 1]{Kamzolov:sphere-1984}, see also \cite{dai, mhaskar}.
Moreover,  Pesenson in \cite{pes} obtained
a very general version of Bernstein--Nikolskii type  inequalities on non-compact
symmetric spaces, and in \cite{Pesenson:Besov-2008} on compact homogeneous spaces.

In this paper, by a very different method of proof,
we extend Pesenson's result \cite{Pesenson:Besov-2008} to a wider range of indices
$0<p<q\leq\infty$ as well as give an interpretation of the constant that we obtain in terms
of the eigenvalue counting function for the Laplace operator on the group
acting on the homogeneous
manifold. For certain indices this gives the best constant in the Nikolskii
inequality, and this constant is attained on the Dirichlet kernel.

Consequently, we use this to establish embedding properties between different
families of function spaces. Besov spaces on Lie groups have been recently
actively analysed, e.g., from the point of view of the heat kernel
\cite{Skr} and the Littlewood-Paley theory \cite{FMV},
see also \cite{Gallagher-Sire:Besov-algebras-groups-polynomial-growth-SM-2012},
but apart from general
definitions and certain properties no embedding properties for these spaces have been given.
For functions on $\mathbb{S}^{n}$, the Besov spaces were studied in, e.g., \cite{hesse}.
In that paper the Nikolskii inequality was applied to obtain the Sobolev-type embedding
theorems. For more general results see \cite{DW, DX}.

Apart from embeddings between Besov spaces that can be obtained by trivial arguments,
the Nikolskii inequality allows us to derive  a rather complete list of
embeddings with respect to all three indices of the space. The norms we use for
proofs depend on the
number $k_{\xi}$ of invariant vector fields in the representation space
(for a representation $[\xi]\in\widehat{G}$ of dimension $d_{\xi}$) with respect to
the subgroup $K$. In Section \ref{SEC:localisations}, as a consequence
of the interpolation theorems, we show that for certain ranges of indices the
Besov spaces defined in terms of the global Littlewood-Paley theory
agree with the Besov spaces defined in local coordinates.

The setting of this paper provides a general environment when ideas resembling the
classical analysis dealing with the Fourier series can still be carried out.

The overall analysis of this paper relies on working with discrete Lebesgue spaces
$\ell^{p}(\Gh)$ on the unitary dual $\Gh$ of the compact Lie group $G$.
Such spaces have been introduced and developed recently in
\cite{rt:book} and they allow one to quantify the Fourier transforms of functions on
$G$ by fixing the Hilbert-Schmidt norms of the Fourier coefficients and working
with weights expressed in terms of the dimensions of the representations of the
group. These spaces $\ell^{p}(\Gh)$ have been already effective and
were used in \cite{dr:gevrey} to give the characterisation of Gevrey spaces
(of Roumeau and Beurling types)
and the corresponding spaces of ultradistributions on compact Lie groups
and homogeneous spaces.

In this paper we modify this construction and extend it further to enable
one to work with class I representations 
 thus
facilitating the analysis of functions which are
constant on right cosets leading to the quantification of
the Fourier coefficients on the compact homogeneous spaces $G/K$.

Typical examples of homogeneous spaces are
the spheres ${\mathbb S}^{n}={\rm SO}(n+1)/{\rm SO}(n)$
 in which case we can take $k_{\xi}=1$.
Similarly, we can consider  complex spheres
(or complex projective spaces) $\mathbb P\mathbb C^{n}={\rm SU}(n+1)/{\rm SU}(n)$,
or quaternionic projective spaces
$\mathbb P\mathbb H^{n}={\rm Sp}(n+1)/{\rm Sp}(n)\times {\rm Sp}(1)$.
In the case of the trivial subgroup $K=\{e\}$ the
homogenous space is the compact Lie group $G$ itself, and we recover the original spaces in \cite{rt:book} by taking $k_{\xi}=d_{\xi}.$

Consequently, we look at the Wiener algebra $A$ of functions with summable
Fourier transforms and its $\ell^{\beta}$-versions, the $\beta$-Wiener spaces
$A^{\beta}$. In particular, we prove the embedding
$B_{2,1}^{n/2}\hookrightarrow A$ as well as its
$\beta$-version as the embeddings between $A^{\beta}$ and $B_{p,\beta}^{r}$
(and their inverses
depending on whether $1<p\leq 2$ or $2\leq p<\infty$). We note that our version of
these spaces is based on the scale of $\ell^{p}$ spaces described above
(given in \eqref{EQ:norm}) and not on the Schatten norms. Spaces with Schatten
norms have been considered as well as their weighted versions, see
\cite{Ludwig-Spronk-Turowska:Beurling-Fourier-JFA-2012}, and also
\cite{Dales-Lau:Beurling-MAMS-2005} and \cite{Lee-Samei:Beurling-JFA-2012}
for rather extensive analysis. Such spaces go under the name of Beurling--Fourier
spaces in the literature. To distinguish with Beurling spaces described below
(which appear to be almost new in the recent literature especially since
they are based on the different scale of $\ell^{p}$ spaces developed
in \cite{rt:book}), we use the name of
Wiener spaces for the spaces $A^\beta$ referring to the original studies of Wiener
of what is also known as the Wiener algebra.

The Beurling space $A^*$ was introduced by Beurling \cite{Beurling:AM-synthesis}
for establishing contraction
properties of functions.
In \cite{bel} it was shown that $A^*(\T)$ is an algebra and its properties
were investigated.
The definition of the Beurling space in multi-dimensions, even on $\Tn$, is
not straightforward since we would need to take into account the sums in different
directions which can be done in different ways.
For this, we first reformulate the norm of the space $A^{*}$ in the
(Littlewood--Paley) way which
allows extension to spaces when the unitary dual $\Gh$ is discrete but
is different from $\Z^{n}.$
Consequently, we analyse the space $A^*$ as well as its
$\beta$-version $A^{*,\beta}$ in the setting of general compact homogeneous spaces.
These function spaces play an important role in the summability theory and
in the Fourier synthesis (see, e.g.,
\cite[Theorems 1.25 and 1.16]{stein-weiss} and \cite[Theorem 8.1.3]{trigub}). 
On the circle,
the spaces $A^{*,\beta}$ were studied in \cite[Ch. 6]{trigub}.
Here, we analyse an analogue of these spaces on general compact homogeneous groups
and prove two-sided embedding properties between these spaces
and appropriate Besov spaces. The analysis is again based on the scale
of $\ell^{p}$ spaces in \eqref{EQ:norm} which allow us to also
establish their interpolation properties.

We note that the questions of estimating projectors to individual eigenspaces
(rather than to eigenspaces corresponding to eigenvalues $\leq L$ as it arises
in the Nikolskii inequality) have appear naturally
in different problems in harmonic analysis and have been also studied.
For example, in the analysis related to the Carleson-Sj\"olin theorem
for spherical harmonics on the 2-sphere,
Sogge \cite[Theorem 4.1]{Sogge:Oscillatory-intergrals-spherical-harmonics-DMJ-1986}
obtained $L^{2}$-$L^{p}$ estimates for harmonic projections on spheres.
The same $L^{2}$-$L^{p}$ estimates but on compact Lie groups have been obtained
by Giacalone and Ricci
\cite{Giacalone-Ricci:harmonic-projectors-cpt-Lie-gps-MA-1988}.
We note that on the one hand, estimates for the projection to the eigenspace
corresponding to an eigenvalue $L$ are better
than those appearing when projecting to the span of eigenspaces corresponding
to eigenvalues $\leq L$. But on the other hand, the Nikolskii inequality
provides a better estimate compared to the one that can be obtained by
summing up the individual ones. In
Corollary \ref{THM3-2} we show that for certain indices the power in the projection
to to the span of eigenspaces corresponding
to eigenvalues $\leq L$ on a compact Lie group can be improved compared
to the one in the Nikolskii's inequality (but which, in turn, is also sharp for certain indices,
see Theorem \ref{THM2}).

The paper is organised as follows.
Section \ref{SEC:Fourier} is devoted to introducing the spaces $\ell^{p}(\Gh_{0})$
for the class I representations of the compact Lie group $G$ and for the
corresponding Fourier analysis.
Sections \ref{SEC:Nikolskii} and \ref{SEC:Nikolskii1} are devoted to Nikolskii's inequality.
There, in Section \ref{SEC:Nikolskii} we establish the Nikolskii inequality on
general compact homogeneous spaces, and in Section \ref{SEC:Nikolskii1}
we give its refinement of compact Lie groups taking into account the number
of non-zero Fourier coefficients in the constant.
In Section \ref{SEC:Besov} we analyse embedding properties between various smooth function spaces including Sobolev, Besov, and Triebel--Lizorkin spaces. Section \ref{SEC:Wiener}
and Section \ref{SEC:Beurling} deal with Wiener  and Beurling spaces,
respectively. In Section \ref{interp} we establish interpolation properties
of the introduced spaces. In Section \ref{SEC:localisations} we show that
for certain ranges of indices, the Besov spaces introduced in this paper by
the global Littlewood-Paley theory agree with the Besov spaces in local
coordinates.

For an index $1\leq p\leq\infty$ we will always write $p'$ for its dual index
defined by $\frac 1p +\frac{1}{p'}=1$.
For $a,b\geq 0$, we write $a\lesssim b$ is there is a constant $C>0$ such that
$a\lesssim Cb$, and we write $a\asymp b$ if
$a\lesssim b$ and $b\lesssim a$.

\section{Fourier analysis on homogeneous spaces}
\label{SEC:Fourier}

In this section we collect the necessary facts concerning the Fourier analysis on
compact homogeneous spaces. We start with compact Lie groups.

Let $G$ be a compact Lie group of dimension $\dim G$ with its unitary dual
denoted by $\Gh$.
Fixing the basis in representation spaces, we can work with
matrix representations $\xi:G\to\C^{d_{\xi}\times d_{\xi}}$ of degree $d_{\xi}$.
We recall that by the Peter--Weyl theorem the collection
$\{\sqrt{d_{\xi}}\xi_{ij}: [\xi]\in\Gh, 1\leq i,j\leq d_{\xi}\}$ forms an orthonormal
basis in $L^{2}(G)$ with respect to the normalised Haar measure on $G$.
The integrals and the spaces $L^{p}(G)$ are always taken
with respect to the normalised bi-invariant Haar measure on $G$.
For $f\in C^{\infty}(G)$, its Fourier coefficient at $\xi\in [\xi]\in\Gh$ is defined as
$$
\widehat{f}(\xi)=\int_{G} f(x) \xi(x)^{*} \dd  x.
$$
Consequently, we have $\widehat{f}(\xi)\in\C^{d_{\xi}\times d_{\xi}}.$
The Fourier series of $f$ becomes
\begin{equation}\label{EQ:FS}
f(x)=\sum_{[\xi]\in\Gh} d_{\xi} \Tr (\widehat{f}(\xi) \xi(x)).
\end{equation}
In \cite{rt:book}, the Lebesgue spaces $\ell^{p}(\Gh)$  have been introduced
on $\Gh$
by the following norms, which we now write for the Fourier coefficients of $f$, as
\begin{equation}\label{EQ:norm}
\|\widehat{f}\|_{\ell^{p}(\Gh)}=\left(\sum_{[\xi]\in\Gh} d_{\xi}^{p(\frac{2}{p}-\frac12)}
\|\widehat{f}(\xi)\|_{\HS}^{p}\right)^{1/p},\; 1\leq p<\infty,
\end{equation}
and
\begin{equation}\label{EQ:norm-linfty}
\|\widehat{f}\|_{\ell^{\infty}(\Gh)}=
\sup_{[\xi]\in\Gh} d_{\xi}^{-\frac12} \|\widehat{f}(\xi)\|_{\HS},
\end{equation}
with
$\|\widehat{f}(\xi)\|_{\HS}=\Tr(\widehat{f}(\xi) \widehat{f}(\xi)^{*})^{1/2}.$
These are interpolation spaces, and the Hausdorff-Young inequality holds
in them in both directions depending on $p$. We refer to \cite[Section 10.3.3]{rt:book}
for these and other properties of these spaces, and to
\cite{rt:book,rt:groups} for further operator analysis.

For $[\xi]\in\Gh$, we denote by $\jp{\xi}$ the eigenvalue of the operator
$(I-\Lap_{G})^{1/2}$ corresponding to the representation class
$[\xi]\in\Gh$ where $\Lap_{G}$ is the Laplace-Beltrami operator
(Casimir element) on $G$.

We now give modifications of this construction for compact homogeneous
spaces $M$. Let $G$ be a compact motion group of $M$ and $K$ a
stationary subgroup at some point, so that we can identify $M=G/K$.
Typical examples are the spheres ${\mathbb S}^{n}={\rm SO}(n+1)/{\rm SO}(n)$
or complex spheres $\mathbb P\mathbb C^{n}={\rm SU}(n+1)/{\rm SU}(n)$,
or quaternionic projective spaces
$\mathbb P\mathbb H^{n}$.
We normalise measures so
that the measure on $K$ is a probability one.

We denote by $\Gh_{0}$ the subset of $\Gh$ of representations that are
class I with respect to the subgroup  $K$. This means that
$[\xi]\in\Gh_{0}$ if $\xi$ has at least one non-zero invariant vector $a$ with respect
to $K$, i.e., that
$\xi(k)a=a$ for all $k\in K.$
Let $\Hcal_{\xi}\simeq \C^{d_{\xi}}$
denote the representation space of $\xi(x):\Hcal_{\xi}\to\Hcal_{\xi}$
and let $\B_{\xi}$ be the space of these invariant vectors.
Let $k_{\xi}:=\dim\B_{\xi}.$
We fix an orthonormal basis of $\Hcal_{\xi}$ so that
its first $k_{\xi}$ vectors are the basis of $B_{\xi}.$
The matrix elements $\xi_{ij}(x)$, $1\leq j\leq k_{\xi}$,
are invariant under the right shifts by $K$.

We note that if $K=\{e\}$ so that $M=G/K=G$ is the Lie group, we have
$\Gh=\Gh_{0}$ and $k_{\xi}=d_{\xi}$ for all $\xi$. As the other extreme, if
$K$ is a massive subgroup of $G$, i.e., if for every $\xi$ there is precisely one invariant vector
with respect to $K$, we have $k_{\xi}=1$ for all $[\xi]\in\Gh_{0}.$
This is, for example, the case for the spheres $M={\mathbb S}^{n}.$
Other examples can be found in Vilenkin \cite{Vilenkin:BK-eng-1968}.

For a function $f\in C^{\infty}(G/K)$ we can
write the Fourier series of its canonical
lifting $\wt{f}(g):=f(gK)$ to $G$,
$\wt{f}\in C^{\infty}(G)$, so that the Fourier
coefficients satisfy  $\widehat{\wt{f}}(\xi)=0$ for all representations
with $[\xi]\not\in\Gh_{0}$.
Moreover, for class I representations we have $\widehat{\wt{f}}(\xi)_{{jk}}=0$
for $j>k_{\xi}$. We will often drop writing tilde for simplicity, and agree that
for a distribution $f\in\Dcal'(G/K)$ we have
$\widehat{f}(\xi)=0$ for $[\xi]\not\in\Gh_{0}$ and
$\widehat{f}(\xi)_{ij}=0$ if $i>k_{\xi}$.

With this, we can write the Fourier series of $f$ (or of $\wt{f}$) in terms of
the spherical functions $\xi_{ij}$, $1\leq j\leq k_{\xi}$, of the representations
$\xi$, $[\xi]\in\Gh_{0}$, with respect to the subgroup
$K$. Namely, the Fourier series \eqref{EQ:FS} becomes
\begin{equation}\label{EQ:FSh}
f(x)=\sum_{[\xi]\in\Gh_{0}} d_{\xi} \sum_{i=1}^{d_{\xi}} \sum_{j=1}^{k_{\xi}}
\widehat{f}(\xi)_{ji}\xi_{ij}(x).
\end{equation}
For the details of this construction we refer to \cite{Klimyk-Vilenkin:Vol-1}.

In the case we work on the homogeneous space $G/K$, in order to shorten the
notation, for $[\xi]\in\Gh_{0}$,
it makes sense to set $\xi(x)_{ij}:=0$ for all $j>k_{\xi}$. Indeed, this will
not change the Fourier series expression \eqref{EQ:FSh} since these entires
do not appear in the sum \eqref{EQ:FSh} for $f\in\Dcal'(G/K).$ With this convention
we can still write \eqref{EQ:FSh} in the compact form
\begin{equation}\label{EQ:FSh-mat}
f(x)=\sum_{[\xi]\in\Gh_{0}} d_{\xi} \Tr (\widehat{f}(\xi) \xi(x)),\quad
f\in C^{\infty}(G/K).
\end{equation}
For future use, we note that with these conventions the matrix
$\xi(x)\xi(x)^{*}$ is diagonal with the first $k_{\xi}$ diagonal entries equal to one and
others equal to zero, so that we have
\begin{equation}\label{EQ:xi-HS}
\|\xi(x)\|_{\HS}=k_{\xi}^{1/2} \textrm{ for all } [\xi]\in\Gh_{0}, \; x\in G/K.
\end{equation}
For the space of Fourier coefficients of functions on $G/K$ we define
\begin{equation}\label{EQ:Sigma}
\Sigma(G/K):=\{\sigma:\xi\mapsto\sigma(\xi): \;
[\xi]\in\Gh_{0},\; \sigma(\xi)\in\C^{d_{\xi}\times d_{\xi}}, \; \sigma(\xi)_{ij}=0
\textrm{ for } i>k_{\xi}\}.
\end{equation}
In analogy to \eqref{EQ:norm},
we can define the Lebesgue spaces $\ell^{p}(\Gh_{0})$
by the following norms which we will apply to Fourier coefficients
$\widehat{f}\in\Sigma(G/K)$ of $f\in\Dcal'(G/K)$.
Thus, for $\sigma\in\Sigma(G/K)$ we set
\begin{equation}\label{EQ:Lp-sigmaGK}
\|\sigma\|_{\ell^{p}(\Gh_{0})}:=\left(\sum_{[\xi]\in\Gh_{0}} d_{\xi}
k_{\xi}^{p(\frac{1}{p}-\frac12)}
\|\sigma(\xi)\|_{\HS}^{p}\right)^{1/p},\; 1\leq p<\infty,
\end{equation}
and
$$
\|\sigma\|_{\ell^{\infty}(\Gh_{0})}:=
\sup_{[\xi]\in\Gh_{0}} k_{\xi}^{-\frac12} \|\sigma(\xi)\|_{\HS}.
$$
In the case $K=\{e\}$, so that $G/K=G$, these spaces coincide with
those defined by \eqref{EQ:norm} since $k_{\xi}=d_{\xi}$ in this case.
Again, by the same argument as that in \cite{rt:book},
these spaces are interpolation spaces and the Hausdorff-Young
inequality holds:
\begin{equation}\label{EQ:HY}
\|\widehat{f}\|_{\ell^{p'}(\Gh_{0})}\leq \|f\|_{L^{p}(G/K)},\;
\|f\|_{L^{p'}(G/K)}\leq \|\widehat{f}\|_{\ell^{p}(\Gh_{0})},
\quad
1\leq p\leq 2,
\end{equation}
where here and in the sequel we define the dual index $p'$ by $\frac{1}{p'}+\frac1p=1.$

We will need the following embedding property of these spaces:
\begin{equation}\label{EQ:cpt-lp-lq}
\ell^p(\Gh_{0})\hookrightarrow \ell^q(\Gh_{0})
\textrm{ and } \|\sigma\|_{\ell^q(\Gh_{0})}\leq \|\sigma\|_{\ell^p(\Gh_{0})},
\qquad  1\leq p\leq q\leq\infty.
\end{equation}
Indeed, we can assume $p<q$. Then, in the case $1\leq p<\infty$ and $q=\infty$, we can estimate
$$
\|\sigma\|_{\ell^\infty(\Gh_{0})}^p
 =  \p{\sup_{[\xi]\in\Gh_{0}} k_\xi^{-\frac12} \|\sigma(\xi)\|_\HS}^p
 \leq
\sum_{[\xi]\in\Gh_{0}} d_{\xi }k_\xi^{1-\frac p2} \|\sigma(\xi)\|_\HS^p = \|\sigma\|_{\ell^p(\Gh_{0})}^p.
$$
Let now $1\leq p<q<\infty$. Denoting $a_\xi:=d_{\xi}^{\frac1q}
k_\xi^{\frac 1q-\frac12}\|\sigma(\xi)\|_\HS$, we get
\begin{eqnarray*}
\|\sigma\|_{\ell^q(\Gh_{0})}
=
\p{\sum_{{[\xi]\in\Gh_{0}}} a_\xi^{q}}^{\frac 1q}
\leq
\p{\sum_{{[\xi]\in\Gh_{0}}} a_\xi^{p}}^{\frac 1p} =
\p{\sum_{{[\xi]\in\Gh_{0}}}  d_\xi^{\frac pq} k_{\xi}^{\frac pq-\frac p2}\|\sigma(\pi)\|_\HS^p}^{\frac1p}
\leq
\|\sigma\|_{\ell^p(\Gh_{0})},
\end{eqnarray*}
completing the proof.

Let $\Lap_{G/K}$ be the differential operator on $G/K$ obtained by $\Lap_{G}$
acting on functions that are constant on right cosets of $G$, i.e.,
such that $\widetilde{\Lap_{G/K}f}=\Lap_{G}\widetilde{f}$ for $f\in C^{\infty}(G/K)$.
For $[\xi]\in\Gh_{0}$, we denote by $\jp{\xi}$ the eigenvalue of
the operator $(1-\Lap_{G/K})^{1/2}$ corresponding to $\xi(x)_{ij}, 1\leq i\leq d_{\xi},
1\leq j\leq k_{\xi}$.
The operator $(1-\Lap_{G/K})^{1/2}$ is
a classical first order elliptic pseudo-differential operator
(see, e.g., Seeley \cite{Seeley}), with the same eigenspaces as the
operator $-\Lap_{G/K}$.
We refer to \cite{rt:book,rt:groups} for further details of the Fourier and
operator analysis on
compact homogeneous spaces.

Let $N(L)$ be the Weyl counting function for the elliptic
pseudo-differential operator
$(1-\Lap_{G/K})^{1/2}$, denoting the number of eigenvalues
$\leq L$,
counted with multiplicity.
From the above, we have
\begin{equation}\label{EQ:NL-sum}
N(L)=\mathop{\sum_{\jp{\xi}\leq L}}_{[\xi]\in\Gh_{0}} d_{\xi} k_{\xi}.
\end{equation}
For sufficiently
large $L$,
the Weyl asymptotic formula asserts that
\begin{equation}\label{EQ:NL}
N(L)\sim C_{0} L^{n}, \quad C_{0}=(2\pi)^{-n}\int_{\sigma_{1}(x,\omega)< 1} \dd x\dd\omega,
\end{equation}
where $n=\dim G/K$, and the integral is taken with respect to the
measure on the cotangent bundle $T^{*}(G/K)$ induced by the
canonical symplectic structure, with $\sigma_{1}$
being the principle symbol of the operator $(1-\Lap_{G/K})^{1/2},$
see, e.g., Shubin \cite{shubin:r}.

In the sequel we will often use the counting function for eigenvalues in the
dyadic annuli, and in these cases we can always assume that $\jp{\xi}$ is sufficiently
large, so that \eqref{EQ:NL-sum}--\eqref{EQ:NL} imply
\begin{equation}\label{EQ:NL1}
\sumxip d_{\xi}k_{\xi} \asymp 2^{sn}
\end{equation}
with a constant independent of $s$, in view of the estimates for the remainder
in Weyl spectral asymptotics, see, e.g.,
\cite{Duistermaat-Guillemin:IM-1975}.
This property will be often used in the sequel, and we may often write $\leq$ instead of
$\lesssim$ in the corresponding estimates to emphasise that the
constant is independent of $s$.

\section{General Nikolskii inequality}
\label{SEC:Nikolskii}

In this section we establish the Nikolskii inequality for trigonometric
functions on $G/K$. These are functions $T\in C^{\infty}(G/K)$ for which
only finitely many Fourier coefficients are non-zero, so that the
Fourier series \eqref{EQ:FSh-mat} is finite.
We will also use the notation $T_{L}$ for trigonometric polynomials
for which $\widehat{T_{L}}(\xi)=0$ for $\jp{\xi}>L$.
Thus, given the coefficients $\sigma=(\sigma({\xi}))_{[\xi]\in\Gh_{0}}\in\Sigma(G/K)$,
we can define the trigonometric polynomial with these Fourier coefficients by
\begin{equation}\label{EQ:TL}
T_{L }(x)=\sum_{\jp{\xi}\leq L} d_{\xi} \Tr (\sigma({\xi}) \xi(x)),
\end{equation}
where the sum is taken over all $[\xi]\in\Gh_{0}$ with $\jp{\xi}\leq L$.
In this case we have $\widehat{T_{L}}(\xi)=\sigma({\xi})$ for $\jp{\xi}\leq L$
and  $\widehat{T_{L}}(\xi)=0$ for $\jp{\xi}>L$.

Nikolskii's inequality in this setting has been analysed by
Pesenson \cite{Pesenson:Besov-2008}.
Here we give a more elementary proof of the Nikolskii inequality for $T_{L}$,
extending the range of indices $p,q$, and also
with the constant
interpreted in terms of the eigenvalue counting function
$N(L)$ in \eqref{EQ:NL}.

Let $D\in C^{\infty}(G/K)$ be a Dirichlet-type kernel, defined by setting its Fourier
coefficients to be
\begin{equation}\label{diri}
\widehat{D}(\xi):=\left(
\begin{matrix}
I_{k_{\xi}} & 0 \\
0 & 0
\end{matrix}
\right) \; \textrm{ for } [\xi]\in\Gh_{0} \textrm{ and } \jp{\xi}\leq L,
\end{equation}
and zero otherwise, where $I_{k_{\xi}}\in \mathbb{C}^{{k_{\xi}}\times {k_{\xi}}}$ is the
unit matrix.

\begin{thm}\label{THM2}
Let $M=G/K$ be a compact homogeneous space of dimension $n$.
Let $0<p<q\leq\infty$.
For $0<p\leq 2$ set $\rho:=1$, and for $2<p<\infty$, set
$\rho$ to be the smallest integer $\geq p/2$.
Then for any $L$ we have the estimate
\begin{equation}\label{EQ:N}
\|T_{L}\|_{L^{q}(G/K)}\leq N(\rho L)^{\frac1p-\frac1q}\|T_{L}\|_{L^{p}(G/K)},
\end{equation}
where $N(L)$ is the Weyl eigenvalue counting function for the elliptic
pseudo-differential operator
$(1-\Lap_{G/K})^{1/2}$.
Consequently, using \eqref{EQ:NL}, we have
\begin{equation}\label{EQ:N2}
\|T_{L}\|_{L^{q}(G/K)}\leq (C_{1}{\rho}^{n})^{\frac1p-\frac1q} L^{n(\frac1p-\frac1q)}
\|T_{L}\|_{L^{p}(G/K)},
\end{equation}
which holds for sufficiently
large $L$ for any constant
$C_{1}>C_{0}$, with $C_{0}$ given in  \eqref{EQ:NL}.

Moreover, the inequality \eqref{EQ:N}
is sharp for $p=2$ and $q=\infty$, and it becomes equality for $T=D$,
where $D$ is the Dirichlet-type kernel.
\end{thm}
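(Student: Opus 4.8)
The plan is to reduce the whole statement to the single endpoint estimate
\begin{equation*}
\|T_{L}\|_{L^{\infty}(G/K)}\le N(\rho L)^{1/p}\|T_{L}\|_{L^{p}(G/K)}.
\end{equation*}
Once this holds for the appropriate $\rho$, the case $q<\infty$ follows by one Hölder interpolation: since $G/K$ carries a probability measure, $\int|T_{L}|^{q}=\int|T_{L}|^{p}|T_{L}|^{q-p}\le\|T_{L}\|_{L^{p}}^{p}\|T_{L}\|_{L^{\infty}}^{q-p}$, and inserting the endpoint bound gives exactly $\|T_{L}\|_{L^{q}}\le N(\rho L)^{\frac1p-\frac1q}\|T_{L}\|_{L^{p}}$, which is \eqref{EQ:N}; then \eqref{EQ:N2} is immediate from the Weyl asymptotics \eqref{EQ:NL}.

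First I would settle the base case $p=2$, $q=\infty$, $\rho=1$ directly. Expanding $T_{L}(x)=\sum_{\jp{\xi}\le L}d_{\xi}\Tr(\widehat{T_{L}}(\xi)\xi(x))$ and applying Cauchy--Schwarz, first in the matrix entries (for the Hilbert--Schmidt inner product) and then in the sum over $[\xi]$ with weights $d_{\xi}$, yields
\begin{equation*}
|T_{L}(x)|\le\Bigl(\sum_{\jp{\xi}\le L}d_{\xi}\|\widehat{T_{L}}(\xi)\|_{\HS}^{2}\Bigr)^{1/2}\Bigl(\sum_{\jp{\xi}\le L}d_{\xi}\|\xi(x)\|_{\HS}^{2}\Bigr)^{1/2}.
\end{equation*}
By the Plancherel identity (the case $p=2$ of \eqref{EQ:HY}) the first factor is $\|T_{L}\|_{L^{2}}$, while \eqref{EQ:xi-HS} gives $\|\xi(x)\|_{\HS}^{2}=k_{\xi}$, so by \eqref{EQ:NL-sum} the second factor equals $N(L)^{1/2}$. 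This proves $\|T_{L}\|_{L^{\infty}}\le N(L)^{1/2}\|T_{L}\|_{L^{2}}$. For sharpness I would take $T=D$: at the base point $eK$ both Cauchy--Schwarz steps are equalities because $\xi(eK)^{*}$ coincides with $\widehat{D}(\xi)$ from \eqref{diri}, and a direct evaluation gives $D(eK)=N(L)=N(L)^{1/2}\|D\|_{L^{2}}$, so equality holds.

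For $0<p\le2$ (where $\rho=1$) I would bootstrap from the base case: the elementary splitting $\|T_{L}\|_{L^{2}}^{2}=\int|T_{L}|^{2-p}|T_{L}|^{p}\le\|T_{L}\|_{L^{\infty}}^{2-p}\|T_{L}\|_{L^{p}}^{p}$, combined with $\|T_{L}\|_{L^{\infty}}^{2}\le N(L)\|T_{L}\|_{L^{2}}^{2}$, gives the endpoint after cancelling $\|T_{L}\|_{L^{\infty}}^{2-p}$. For $2<p<\infty$ the idea is to pass to powers: with $\rho=\lceil p/2\rceil$ the function $g:=T_{L}^{\rho}$ should be band-limited to $\jp{\xi}\le\rho L$, so applying the base case to $g$ gives $\|T_{L}\|_{L^{\infty}}^{\rho}=\|g\|_{L^{\infty}}\le N(\rho L)^{1/2}\|g\|_{L^{2}}=N(\rho L)^{1/2}\|T_{L}\|_{L^{2\rho}}^{\rho}$, i.e.\ $\|T_{L}\|_{L^{\infty}}\le N(\rho L)^{1/(2\rho)}\|T_{L}\|_{L^{2\rho}}$; since $2\rho\ge p$, a further Hölder interpolation exactly as in the $p\le2$ case lowers the exponent to $\|T_{L}\|_{L^{\infty}}\le N(\rho L)^{1/p}\|T_{L}\|_{L^{p}}$, completing the reduction.

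The main obstacle is the band-limiting claim: \emph{if $\widehat{T_{L}}(\xi)=0$ for $\jp{\xi}>L$, then $T_{L}^{\rho}$ has Fourier coefficients supported in $\jp{\xi}\le\rho L$.} Products of right-$K$-invariant functions remain right-$K$-invariant and $\jp{\xi}$ is read off the same operator $(1-\Lap_{G})^{1/2}$ on the lifts, so it suffices to prove the two-factor statement on $G$: if $[\zeta]$ occurs in $[\xi_{\lambda}]\otimes[\xi_{\mu}]$ then $\jp{\zeta}\le\jp{\xi_{\lambda}}+\jp{\xi_{\mu}}$. Writing $c(\lambda)=|\lambda+\delta|^{2}-|\delta|^{2}$ for the Casimir eigenvalue, with $\delta$ the half-sum of positive roots, so that $\jp{\xi_{\lambda}}=(1+c(\lambda))^{1/2}$, I would argue in three steps: (i) every constituent has highest weight $\nu\preceq\lambda+\mu$ in the root order, so $\nu+\delta\preceq\lambda+\mu+\delta$ are dominant with $|\nu+\delta|\le|\lambda+\mu+\delta|$ by the standard fact that $|x|\le|y|$ for dominant $x\preceq y$, whence $c(\nu)\le c(\lambda+\mu)$; (ii) the identity $c(\lambda+\mu)=c(\lambda)+c(\mu)+2\langle\lambda,\mu\rangle$ together with $c(\lambda)\ge|\lambda|^{2}$ and Cauchy--Schwarz $\langle\lambda,\mu\rangle\le|\lambda||\mu|\le\sqrt{c(\lambda)c(\mu)}$ gives $\sqrt{c(\lambda+\mu)}\le\sqrt{c(\lambda)}+\sqrt{c(\mu)}$; and (iii) the elementary inequality $(1+(s+t)^{2})^{1/2}\le(1+s^{2})^{1/2}+(1+t^{2})^{1/2}$ upgrades this to $\jp{\zeta}\le\jp{\xi_{\lambda}}+\jp{\xi_{\mu}}$. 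Iterating over the $\rho$ factors produces the bandwidth $\rho L$, and this is the only point at which representation-theoretic input is genuinely needed.
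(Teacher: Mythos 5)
Your proposal is correct and follows essentially the same route as the paper: the Cauchy--Schwarz base case $\|T_{L}\|_{L^{\infty}}\le N(L)^{1/2}\|T_{L}\|_{L^{2}}$, the bootstrap $\int |T_L|^a|T_L|^b$ arguments to reach the endpoint $\|T_{L}\|_{L^{\infty}}\le N(\rho L)^{1/p}\|T_{L}\|_{L^{p}}$ for all $p$, the H\"older interpolation down to finite $q$, and the evaluation of the Dirichlet kernel at $eK$ for sharpness. The only substantive difference is that you supply a full proof (via highest weights $\nu\preceq\lambda+\mu$ and the Casimir eigenvalue identity) of the band-limiting claim that $T_{L}^{\rho}$ has degree at most $\rho L$, which the paper merely asserts by reference to characters on the maximal torus; your argument for that step is sound.
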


\begin{remark}
For example, in the case of the real spheres $M={\mathbb S}^{n}$,
for $0<p\leq 2$ and $p<q\leq\infty$, we have
\begin{equation}\label{EQ:NS}
\|T_{L}\|_{L^{q}(\mathbb S^{n})}\leq C_{1}^{\frac1p-\frac1q}
L^{n(\frac1p-\frac1q)}
\|T_{L}\|_{L^{p}(\mathbb S^{n})},
\end{equation}
for any
$C_{1}>\frac{2}{n!},$ for all sufficiently large $L$.
Here the value of $C_{0}$ in \eqref{EQ:NL} follows from the explicit
formulae for the Weyl counting function on $\mathbb S^{n}$, see, e.g.,
Shubin \cite[Sec. 22]{shubin:r}. In particular, considering $\mathbb S^{1}=\mathbb T^{1}$ we obtain the constant
$C_1^{\frac1p-\frac1q}$, where $C_1>2$.
\end{remark}

\begin{proof}[Proof of Theorem \ref{THM2}]
We first note that formula \eqref{EQ:N2} follows from \eqref{EQ:N} by the asymptotics of
$N(L)$ in \eqref{EQ:NL}, so it is sufficient to prove  \eqref{EQ:N}.
We will give the proof of \eqref{EQ:N} and its sharpness in five  steps.

\medskip
\noindent
{\underline{Step 1.}
 The case $p=2$ and $q=\infty$.}
From formula \eqref{EQ:TL} and using
$\|\xi(x)\|_{\HS}=k_{\xi}^{1/2}$ from \eqref{EQ:xi-HS},
we can estimate
\begin{equation}\label{EQ2inf}
\begin{split}
\|T_{L}\|_{L^{\infty}(G/K)}&\le \mathop{\sum_{\jp{\xi}\leq L}}_{[\xi]\in\Gh_{0}}
d_{\xi} \|\sigma({\xi})\|_{\HS}
\|\xi(x)\|_{\HS}\\
&=\mathop{\sum_{\jp{\xi}\leq L}}_{[\xi]\in\Gh_{0}}  d_{\xi}k_{\xi}^{1/2} \|\sigma({\xi})\|_{\HS}\\
& \le
\left( \mathop{\sum_{\jp{\xi}\leq L}}_{[\xi]\in\Gh_{0}} d_{\xi} k_{\xi}\right)^{1/2}
\left( \mathop{\sum_{\jp{\xi}\leq L}}_{[\xi]\in\Gh_{0}}
d_{\xi}\|\sigma({\xi})\|_{\HS}^{2}\right)^{1/2} \\
& =
N(L)^{1/2} \|T_{L}\|_{L^{2}(G/K)},
\end{split}
\end{equation}
where in the last inequality we used \eqref{EQ:NL-sum} and the Plancherel
identity.

\medskip
\noindent
{\underline{Step 2.}
 The case $p=2$ and $2<q\leq\infty$.}
 We take
$1\leq q'<2$ so that
$\frac1q+\frac{1}{q'}=1.$
We set $r:=\frac2{q'}$ so that its dual index is
satisfies $\frac{1}{r'}=1-\frac{q'}{2}.$
By the Hausdorff-Young inequality in \eqref{EQ:HY},
and using \eqref{EQ:Lp-sigmaGK} and H\"older's inequality, we obtain
\[
\begin{split}
\|T_{L}\|_{L^{q}(G/K)}&\le \|\sigma\|_{\ell^{q'}(\Gh_{0})}=
\left(\sum_{\jp{\xi}\leq L} d_{\xi}^{1-\frac{q'}{2}} k_{\xi}^{1-\frac{q'}{2}} d_{\xi}^{{\frac{q'}{2}}}
\|\sigma({\xi})\|_{\HS}^{q'}
\right)^{\frac{1}{q'}}\\
& \le
\left( \sum_{\jp{\xi}\leq L} (d_{\xi}k_{\xi})^{(1-\frac{q'}{2})r'}\right)^{\frac{1}{q'r'}}
\left( \sum_{\jp{\xi}\leq L} d_{\xi}^{\frac{q'r}{2}}\|\sigma({\xi})\|_{\HS}^{q'r}\right)^{\frac{1}{q'r}} \\
& =
\left( \sum_{\jp{\xi}\leq L} d_{\xi}k_{\xi}\right)^{\frac{1}{q'}-\frac12}
\left( \sum_{\jp{\xi}\leq L} d_{\xi}\|\sigma({\xi})\|_{\HS}^{2}\right)^{\frac{1}{2}} \\
& =
N(L)^{\frac12-\frac1q} \|T_{L}\|_{L^{2}(G/K)},
\end{split}
\]
where we have used that
$\frac{q'r}{2}=1$.

\medskip
\noindent
{\underline{Step 3.}
 The case $p>2$.}
For $2<p<q\le \infty$ and an integer $\rho\ge p/2$, we claim to have
$$
\|T_{L}\|_{L^{q}(G/K)}\le N( \rho L)^{(1/p-1/q)} \|T_{L}\|_{L^{p}(G/K)}.
$$
Indeed, if $q=\infty$, for $T_{L}\not\equiv 0$, we get
\begin{equation}\label{EQ:df}
\begin{split}
\|T_{L}^\rho\|_{L^{2}}
&=
\||T_{L}|^{\rho-p/2} |T_{L}|^{p/2}\|_{L^{2}}
\le
\||T_{L}|^{\rho-p/2}\|_{L^{\infty}}
\||T_{L}|^{p/2}\|_{L^{2}}
\\
&=
\|T_{L}\|_{L^{\infty}}^{\rho-p/2}
\||T_{L}|^{p/2}\|_{L^{2}}
=
\|T_{L}\|_{L^{\infty}}^{\rho} \|T_{L}\|_{L^{\infty}}^{-p/2}
\||T_{L}|^{p/2}\|_{L^{2}} \\
&
=
\|T_{L}^{\rho}\|_{L^{\infty}} \|T_{L}\|^{-p/2}_{L^{\infty}}
\|T_{L}\|_{L^{p}}^{p/2}
\\
& \le
N( \rho L)^{1/2}
\|T_{L}^{\rho}\|_{L^{2}} \|T_{L}\|^{-p/2}_{L^{\infty}} \|T_{L}\|_{L^{p}}^{p/2},
\end{split}
\end{equation}
where we have used \eqref{EQ2inf} in the last line. We have also
used the fact that $T_{L}^{\rho}$ is a trigonometric polynomial of degree
$\rho L$, which follows from the decomposition of (Kronecker's)
tensor products of representations
into irreducible components by looking at the corresponding characters
on the maximal torus of $G$.
Therefore, using that $T_{L}^{\rho}\not\equiv 0$, we have
\begin{equation}\label{EQ:auxpinf}
\begin{split}
\|T_{L}\|_{L^{\infty}}
& \le
N( \rho L)^{1/p}
\|T_{L}\|_{L^{p}}.
\end{split}
\end{equation}
For $p<q<\infty$ we obtain
\begin{equation}\label{EQ:df1}
\begin{split}
\|T_{L}\|_{L^{q}}&= \||T_{L}|^{1-p/q} |T_{L}|^{p/q}\|_{L^{q}}\le
\|T_{L}\|_{L^{\infty}}^{1-p/q} \|T_{L}\|_{L^{p}}^{p/q}
\\
& \le
N( \rho L)^{1/p(1-p/q)} \|T_{L}\|_{L^{p}}^{1-p/q}
\|T_{L}\|_{L^{p}}^{p/q},
\end{split}
\end{equation}
where we have used \eqref{EQ:auxpinf},
which implies \eqref{EQ:N} in this case.

\medskip
\noindent
{\underline{Step 4.}
The case $0<p<2$.}
For $p<q\le\infty$, $0<p\le 2$,
proceding similar to (\ref{EQ:df}) with $\rho=1$ (note that $p/2\le 1$), we get
\[
\begin{split}
\|T_{L}\|_{L^{\infty}}
& \le
N( L)^{1/p}
\|T_{L}\|_{L^{p}}.
\end{split}
\]
Then the estimate as in
(\ref{EQ:df1}) implies \eqref{EQ:N} also in the case $p<q\le\infty$, $0<p\le 2$.

\medskip
\noindent
{\underline{Step 5.}
Sharpness.}
If $D$ is the  Dirichlet-type kernel \eqref{diri},
then using Plancherel's identity and the definition of $N(L)$ we can calculate
\begin{equation}\label{EQ:Dirichlet-L2}
\|D\|_{L^{2}(G/K)}=\p{\sum_{[\xi]\in\Gh_{0}} d_{\xi}
\|\widehat{D}(\xi)\|_{\HS}^{2}}^{1/2}=
\p{\sum_{[\xi]\in\Gh_{0}} d_{\xi}k_{\xi}}^{1/2}=N(L)^{1/2}.
\end{equation}
On the other hand, writing the Fourier series
$D(x)=\sum_{[\xi]\in\Gh_{0}} d_{\xi}
\Tr\p{\xi(x)\widehat{D}(\xi)}$, and recalling our convention about zeros in the
last rows of $\xi(x)$, we have
$$
D(eK)=\sum_{[\xi]\in\Gh_{0}} d_{\xi}\Tr(\widehat{D}(\xi))=
\sum_{\jp{\xi}\leq L} d_{\xi} k_{\xi}=N(L).
$$
Combining this with \eqref{EQ:Dirichlet-L2} in the Nikolskii inequality \eqref{EQ:N} we obtain
$$
N(L)=D(eK)\leq \|D\|_{L^{\infty}}\leq N(L)^{1/2}\|D\|_{L^{2}}=N(L),
$$
showing the sharpness of the constant in \eqref{EQ:N} in the case
$p=2$ and $q=\infty$.
\end{proof}

\section{Nikolskii inequalities on groups}
\label{SEC:Nikolskii1}

In this section we prove that, in the case of the group $G$,
 for a given trigonometric polynomial $T$,
a constant in the Nikolskii inequality depends on the number of non-zero Fourier coefficients
of $T$, and this statement is sharp for $p=2$ and $q=\infty$.

\begin{lem}\label{PROP:DK}
Let $1\leq p<q\leq\infty$ be such that $\frac 1p \geq \frac 1q + \frac12.$
Let $T$ be a trigonometric polynomial on the compact Lie group $G$.
Then
\begin{equation}\label{EQ:Nik-refined}
\|T\|_{L^q(G)}\leq \p{\sum_{\widehat{T}(\xi)\not=0} d_\xi^2}^{\frac 1p-\frac 1q} \|T\|_{L^p(G)}.
\end{equation}
For $p=2$ and $q=\infty$, this estimate is sharp.
\end{lem}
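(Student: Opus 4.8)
The plan is to argue entirely on the Fourier side, sandwiching the weighted $\ell$-norms of $\widehat T$ between the two $L$-norms of $T$ by the Hausdorff--Young inequality \eqref{EQ:HY} (specialised to the group case $K=\{e\}$, where $\Gh_0=\Gh$ and $k_\xi=d_\xi$). The first point is that the hypothesis $\frac1p\ge\frac1q+\frac12$, together with $1\le p<q$, forces $p\le2$ and $q\ge2$: since $\frac1p\ge\frac12$ we get $p\le2$, and since $\frac1q\le\frac1p-\frac12\le\frac12$ we get $q\ge2$. Consequently both ends of \eqref{EQ:HY} are at our disposal: the second inequality there, applied with the exponent $q'\le2$, gives $\|T\|_{L^q(G)}\le\|\widehat T\|_{\ell^{q'}(\Gh)}$, while the first inequality, valid since $p\le2$, gives $\|\widehat T\|_{\ell^{p'}(\Gh)}\le\|T\|_{L^p(G)}$. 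Writing $M:=\sum_{\widehat T(\xi)\ne0}d_\xi^2$, the lemma will follow once I prove the reverse embedding estimate
\[
\|\widehat T\|_{\ell^{q'}(\Gh)}\le M^{\frac1p-\frac1q}\,\|\widehat T\|_{\ell^{p'}(\Gh)}.
\]

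This reverse estimate is the crux, and the main obstacle, since $p\le q$ gives $q'\le p'$ and hence the free embedding \eqref{EQ:cpt-lp-lq} runs the wrong way; the factor $M$ is exactly the price of reversing it, paid through Hölder's inequality over the finite set $S:=\{[\xi]\in\Gh:\widehat T(\xi)\ne0\}$. Recalling from \eqref{EQ:norm} that the $\ell^{r}(\Gh)$-weight is $d_\xi^{2-r/2}$, I would split the $\ell^{q'}$-summand as
\[
d_\xi^{2-\frac{q'}{2}}\|\widehat T(\xi)\|_{\HS}^{q'}
=\Big(d_\xi^{2-\frac{p'}{2}}\|\widehat T(\xi)\|_{\HS}^{p'}\Big)^{q'/p'}\,\big(d_\xi^{2}\big)^{1-q'/p'},
\]
and then sum using Hölder with conjugate exponents $\alpha=p'/q'\ge1$ and $\alpha'=p'/(p'-q')$. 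The exponents are chosen precisely so that raising the first factor to the power $\alpha$ reproduces the $\ell^{p'}$-summand and raising the second to the power $\alpha'$ reproduces $d_\xi^2$; summing then gives $\|\widehat T\|_{\ell^{q'}(\Gh)}^{q'}\le\|\widehat T\|_{\ell^{p'}(\Gh)}^{q'}\,M^{(p'-q')/p'}$, and since $\frac{p'-q'}{p'q'}=\frac1{q'}-\frac1{p'}=\frac1p-\frac1q$, taking $q'$-th roots yields the displayed estimate. The only case left out is $p=1$ (so $p'=\infty$), where the splitting degenerates; there I would argue directly from $\|\widehat T(\xi)\|_{\HS}\le d_\xi^{1/2}\|\widehat T\|_{\ell^{\infty}(\Gh)}$ exactly as in the proof of \eqref{EQ:cpt-lp-lq}, which gives the same bound with exponent $\frac1p-\frac1q=\frac1{q'}$.

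Chaining the three inequalities proves \eqref{EQ:Nik-refined}:
\[
\|T\|_{L^q(G)}\le\|\widehat T\|_{\ell^{q'}(\Gh)}\le M^{\frac1p-\frac1q}\|\widehat T\|_{\ell^{p'}(\Gh)}\le M^{\frac1p-\frac1q}\|T\|_{L^p(G)}.
\]
For sharpness when $p=2$, $q=\infty$, I would test on the Dirichlet-type kernel $D$ of \eqref{diri} on $G$, for which $\widehat D(\xi)=I_{d_\xi}$ when $\jp\xi\le L$, so that $M=\sum_{\jp\xi\le L}d_\xi^2=N(L)$; the computation in \eqref{EQ:Dirichlet-L2} gives $\|D\|_{L^2(G)}=N(L)^{1/2}=M^{1/2}$, while evaluating the Fourier series at the identity gives $D(e)=\sum_{\jp\xi\le L}d_\xi^2=M$. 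Hence $M=D(e)\le\|D\|_{L^\infty(G)}\le M^{1/2}\|D\|_{L^2(G)}=M$, forcing equality throughout. (Equivalently, one may test on a single character $T=d_{\xi_0}\chi_{\xi_0}$, for which $M=d_{\xi_0}^2=T(e)=\|T\|_{L^2(G)}^2$.)
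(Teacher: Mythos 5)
Your proof is correct, but it takes a genuinely different route from the paper's. The paper defines a Dirichlet-type kernel $D$ with $\widehat{D}(\xi)=I_{d_\xi}$ exactly on the spectrum of $T$, writes $T=T*D$, and applies Young's convolution inequality $\|T*D\|_{L^q}\le\|T\|_{L^p}\|D\|_{L^r}$ with $\frac1{r'}=\frac1p-\frac1q$, bounding $\|D\|_{L^r}\le\|\widehat D\|_{\ell^{r'}}=M^{1/r'}$ by Hausdorff--Young; the hypothesis $\frac1p\ge\frac1q+\frac12$ enters precisely to ensure $r\ge 2$ so that Hausdorff--Young applies to $D$. You instead stay entirely on the Fourier side: Hausdorff--Young in both directions sandwiches $\|T\|_{L^q}$ and $\|T\|_{L^p}$ by $\|\widehat T\|_{\ell^{q'}}$ and $\|\widehat T\|_{\ell^{p'}}$, and the factor $M^{\frac1p-\frac1q}$ is extracted by a H\"older interpolation over the support set reversing the embedding $\ell^{q'}\hookleftarrow\ell^{p'}$ (this is essentially the mechanism of Step 2 in the proof of Theorem \ref{THM2}, transplanted from the counting function $N(L)$ to the support count $M$). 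Your exponent bookkeeping checks out, including the degenerate case $p=1$, and the sharpness argument via $D(e)=M=\|D\|_{L^2}^2$ matches what the paper alludes to. One thing your approach buys: it only uses $p\le 2\le q$ (which is all that the two Hausdorff--Young applications and the H\"older step require), so it actually establishes \eqref{EQ:Nik-refined} on the larger range $1\le p\le 2\le q\le\infty$, $p<q$, whereas the paper's convolution route genuinely needs $\frac1p-\frac1q\ge\frac12$. What the paper's route buys is that the same kernel computation immediately yields the partial-sum estimate \eqref{EQ:SLest-1} for general $f\in L^p(G)$ via $S_Lf=f*D$, which the authors reuse right after the lemma.
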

\begin{proof}
Let us define the Dirichlet-type kernel $D$ by setting its Fourier coefficients
to be $\widehat{D}(\xi):=I_{\dxi}$ if $\widehat{T}(\xi)\not=0$, and
$\widehat{D}(\xi):=0$ if $\widehat{T}(\xi)=0$.
Then for any $1\leq r'<\infty$, we can calculate
$$
\|\widehat{D}\|_{\ell^{r'}(\Gh)}^{r'}=\sum_{\widehat{T}(\xi)\not=0} d_\xi^{r'(\frac{2}{r'}-\frac12)}
\|\widehat{D}(\xi)\|_\HS^{r'}=\sum_{\widehat{T}(\xi)\not=0} d_\xi^2.
$$
Consequently, for any $r\geq 2$, by the Hausdorff-Young inequality
\eqref{EQ:HY} with $k_{\xi}=d_{\xi}$, we get
$$
\|D\|_{L^r(G)}\leq \|\widehat{D}\|_{\ell^{r'}(\Gh)}
= \p{\sum_{\widehat{T}(\xi)\not=0} d_\xi^2}^{\frac{1}{r'}}.
$$
Now, we take $r$ so that $\frac1p-\frac1q=1-\frac1r=\frac{1}{r'}$, and observe that
the condition $\frac 1p \geq \frac 1q + \frac12$ implies $r\geq 2$.
On the other hand, we have $\widehat{T*D}=\widehat{D}\widehat{T}=\widehat{T}$, so that
$T=T*D$, see, e.g., \cite[Proposition 7.7.5]{rt:book}.
 Applying the Young inequality and using the estimates for $D$ as above, we obtain
\begin{equation}\label{EQ:Young-ineq}
\|T\|_{L^q(G)}=\|T*D\|_{L^q(G)}\leq \|T\|_{L^p(G)}\|D\|_{L^r(G)}\leq
\|T\|_{L^p(G)}\p{\sum_{\widehat{T}(\xi)\not=0} d_\xi^2}^{\frac{1}{r'}},
\end{equation}
implying \eqref{EQ:Nik-refined}. The sharpness follows by an argument similar to
that in the proof of Theorem \ref{THM2}.
\end{proof}
The main result of this section is the following

\begin{thm}\label{THM3}
Let $0<p<q\leq\infty$. For $0<p\leq 2$ set $\rho:=1$, and for $2<p<\infty$, set $\rho$ to be the smallest integer $\geq p/2$.
Let $T$ be a trigonometric polynomial on the compact Lie group $G$.
Then
\begin{equation}\label{EQ:Nik-refined-1}
\|T\|_{L^q(G)}\leq \p{\sum_{\widehat{T^\rho}(\xi)\not=0} d_\xi^2}^{\frac 1p-\frac 1q} \|T\|_{L^p(G)}.
\end{equation}
Moreover, this inequality is sharp for $p=2$ and $q=\infty$ and it becomes equality for $T=D$, where $D$ is the Dirichlet-type kernel.
\end{thm}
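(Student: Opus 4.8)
The plan is to repeat the five-step scheme from the proof of Theorem~\ref{THM2}, replacing the Weyl counting quantity $N(\rho L)$ throughout by
$$
A:=\sum_{\widehat{T^{\rho}}(\xi)\not=0} d_{\xi}^{2},
$$
and replacing the elementary $L^{2}$--$L^{\infty}$ bound \eqref{EQ2inf} by the endpoint case of Lemma~\ref{PROP:DK}. The point is that the hypothesis $\frac1p\geq\frac1q+\frac12$ of Lemma~\ref{PROP:DK} holds with equality when $p=2$ and $q=\infty$, so the Lemma applies to the trigonometric polynomial $T^{\rho}$ and yields the refined endpoint estimate $\|T^{\rho}\|_{L^{\infty}(G)}\leq A^{1/2}\|T^{\rho}\|_{L^{2}(G)}$.

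First I would establish the auxiliary $L^{\infty}$ estimate
$$
\|T\|_{L^{\infty}(G)}\leq A^{1/p}\|T\|_{L^{p}(G)}.
$$
In all cases $\rho\geq p/2$ (with $\rho=1$ when $0<p\leq 2$, where $p/2\leq 1$), so the exponent $\rho-p/2$ is nonnegative and the splitting $|T|^{\rho}=|T|^{\rho-p/2}|T|^{p/2}$ used in \eqref{EQ:df} gives $\|T^{\rho}\|_{L^{2}}\leq \|T\|_{L^{\infty}}^{\rho-p/2}\|T\|_{L^{p}}^{p/2}$. Combining $\|T^{\rho}\|_{L^{\infty}}=\|T\|_{L^{\infty}}^{\rho}$ with the endpoint bound from Lemma~\ref{PROP:DK} applied to $T^{\rho}$, I obtain $\|T\|_{L^{\infty}}^{\rho}\leq A^{1/2}\|T\|_{L^{\infty}}^{\rho-p/2}\|T\|_{L^{p}}^{p/2}$ and, cancelling $\|T\|_{L^{\infty}}^{\rho-p/2}$ when $T\not\equiv 0$, the desired $\|T\|_{L^{\infty}}\leq A^{1/p}\|T\|_{L^{p}}$. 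As in Theorem~\ref{THM2}, here I use that $T^{\rho}$ is again a trigonometric polynomial, which follows from decomposing Kronecker products of representations into their irreducible components.

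Next I would pass to $p<q<\infty$ by the H\"older interpolation of \eqref{EQ:df1}: writing $|T|=|T|^{1-p/q}|T|^{p/q}$ yields $\|T\|_{L^{q}}\leq\|T\|_{L^{\infty}}^{1-p/q}\|T\|_{L^{p}}^{p/q}$, and inserting the $L^{\infty}$ estimate produces the power $\frac1p(1-\frac pq)=\frac1p-\frac1q$ of $A$, which is exactly \eqref{EQ:Nik-refined-1} (the case $q=\infty$ being the auxiliary estimate itself).

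Finally, for sharpness at $p=2$, $q=\infty$ (so $\rho=1$), I would take $T=D$ the Dirichlet-type kernel on $G$, for which $k_{\xi}=d_{\xi}$. Then $A=\sum_{\jp{\xi}\leq L}d_{\xi}^{2}=N(L)$ and, by Plancherel, $\|D\|_{L^{2}}=N(L)^{1/2}$, while evaluating the Fourier series at the identity gives $\|D\|_{L^{\infty}}\geq D(eK)=N(L)=A^{1/2}\|D\|_{L^{2}}$; together with the proved inequality this forces equality, exactly as in Step~5 of the proof of Theorem~\ref{THM2}. I expect the only genuine obstacle to be the refined $L^{2}$--$L^{\infty}$ endpoint with constant $A$, but this is precisely what Lemma~\ref{PROP:DK} delivers, after which the power and interpolation steps are routine repetitions of those in Theorem~\ref{THM2}.
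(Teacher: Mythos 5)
Your proposal is correct and follows essentially the same route as the paper: the paper's proof of Theorem \ref{THM3} likewise invokes the $p=2$, $q=\infty$ endpoint of Lemma \ref{PROP:DK} and then repeats the self-improvement and H\"older interpolation steps of Theorem \ref{THM2} with $N(\rho L)$ replaced by $\sum_{\widehat{T^\rho}(\xi)\not=0} d_\xi^2$, including the same Dirichlet-kernel sharpness argument. The only (harmless) slip is writing $D(eK)$ instead of $D(e)$ in the group setting.
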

\begin{remark}
 We note that if $T=T_{L}$, i.e., if $\widehat{T}(\xi)=0$ for $\jp{\xi}>L$,
we have $\sum_{\widehat{T}(\xi)\not=0} d_\xi^2\leq N(L)$, uniformly over such $T$,
in agreement with the corresponding part of Theorem \ref{THM2}.
\end{remark}

\begin{proof}[Proof of Theorem \ref{THM3}]
From Lemma \ref{PROP:DK} we
$$
\|T\|_{L^\infty(G)}\leq \p{\sum_{\widehat{T}(\xi)\not=0} d_\xi^2}^{\frac 12} \|T\|_{L^2(G)}.
$$
Then following the proof of Theorem \ref{THM2},
we get (\ref{EQ:Nik-refined-1}) for $2\le p <q\le \infty$ and  $0<p\le 2$, $p\le q\le \infty$.
The sharpness follows by an argument similar to
that in the proof of Theorem \ref{THM2}.
\end{proof}

\medskip
From (the proof of) Lemma \ref{PROP:DK}, for a function $f\in L^{p}(G)$, we immediately get the following
estimate for the partial sums of its Fourier series:
if
$$
S_{L}f(x):=\sum_{\jp{\xi}\leq L} d_{\xi}\ \Tr(\whf(\xi)\xi(x)),
$$
 we have
\begin{equation}\label{EQ:SLest-1}
\|S_{L}f\|_{L^q(G)}\leq N(L)^{\frac 1p-\frac 1q} \|f\|_{L^p(G)},\qquad \frac 1p \geq \frac 1q + \frac12.
\end{equation}
Indeed,
defining the Dirichlet-type kernel $D$ by setting its Fourier coefficients
to be $\widehat{D}(\xi):=I_{\dxi}$ for $\jp{\xi}\leq L$, and
$\widehat{D}(\xi):=0$ for $\jp{\xi}>L$, from the identity
$\widehat{S_{L}f}=\widehat{D}\widehat{f}$, we get
$S_{L}=f*D$, so that applying the Young inequality with indices as in
\eqref{EQ:Young-ineq} and arguing as in the proof of Lemma \ref{PROP:DK}, we obtain
\eqref{EQ:SLest-1} since $k_{\xi}=d_{\xi}$ in this case.


But in fact we can prove a sharper estimate:
\begin{cor}\label{THM3-2}
Let $G$ be a compact Lie group and let
$1\leq p<q\leq\infty$ be such that $\frac 1p > \frac 1q + \frac12.$
For $f\in L^{p}(G)$ we have
\begin{equation}\label{EQ:SLest}
\left(\sum_{k=1}^\infty\frac{\left(k^{1-1/p+1/q}\sup_{N(L)\geq k}\frac1{N(L)}\|S_{L}f\|_{L^q(G)}\right)^p}k\right)^{1/p}\leq C \|f\|_{L^p(G)}.
\end{equation}
In particular, we have
\begin{equation}\label{EQ:SLest2}
{N(L)}^{\frac 1q-\frac 1p}\|S_{L}f\|_{L^q(G)} =o(1)\quad \mbox{as} \qquad L\to \infty
\end{equation}
and therefore
\begin{equation}\label{EQ:SLest4}
L^{n(\frac 1q-\frac 1p)}
\|S_{L}f\|_{L^q(G)} =o(1)\quad \mbox{as} \qquad L\to \infty,
\end{equation}
with $n=\dim G$.
\end{cor}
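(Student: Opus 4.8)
The plan is to recognise the left-hand side of \eqref{EQ:SLest} as a Lorentz quasi-norm and to obtain it by real interpolation of the weak-type bounds already furnished by \eqref{EQ:SLest-1}. Write $\frac1r:=1-\frac1p+\frac1q$, so that the weight $k^{1-1/p+1/q}=k^{1/r}$ and $\frac{1}{r'}=\frac1p-\frac1q$. For $f\in L^p(G)$ I would introduce the non-increasing function $h_f(t):=\sup_{N(L)\ge t}\frac{1}{N(L)}\|S_Lf\|_{L^q(G)}$ on $(0,\infty)$. Since $h_f$ is its own decreasing rearrangement, the left-hand side of \eqref{EQ:SLest} is comparable to the Lorentz quasi-norm $\|h_f\|_{L^{r,p}}=\big(\int_0^\infty (t^{1/r}h_f(t))^p\,\tfrac{\dd t}{t}\big)^{1/p}$, the sum over $k$ and the integral over $t$ being comparable because $h_f$ is monotone. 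The map $U:f\mapsto h_f$ is sublinear, since $\|S_L(f+g)\|_{L^q}\le\|S_Lf\|_{L^q}+\|S_Lg\|_{L^q}$ survives passage to the supremum.

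Next I would record the endpoint estimates. For any admissible exponent $p_i$ with $\frac{1}{p_i}\ge\frac1q+\frac12$, estimate \eqref{EQ:SLest-1} gives $\|S_Lf\|_{L^q}\le N(L)^{1/p_i-1/q}\|f\|_{L^{p_i}}$, hence $\frac{1}{N(L)}\|S_Lf\|_{L^q}\le N(L)^{-1/r_i}\|f\|_{L^{p_i}}$ with $\frac{1}{r_i}=1-\frac{1}{p_i}+\frac1q$; taking the supremum over $N(L)\ge t$ yields $t^{1/r_i}h_f(t)\le\|f\|_{L^{p_i}}$, i.e. $U:L^{p_i}(G)\to L^{r_i,\infty}$ boundedly with norm $\le1$. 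I would then choose two such exponents, $\frac{1}{p_1}=\frac1q+\frac12$ (the borderline) and $\frac{1}{p_0}$ slightly larger than $\frac1p$; the strict hypothesis $\frac1p>\frac1q+\frac12$ is exactly what guarantees $\frac{1}{p_1}<\frac1p<\frac{1}{p_0}$, so that $\frac1p=\frac{1-\theta}{p_0}+\frac{\theta}{p_1}$ for some $\theta\in(0,1)$. As $\frac{1}{r_i}$ is the same affine function of $\frac{1}{p_i}$, the matching relation $\frac1r=\frac{1-\theta}{r_0}+\frac{\theta}{r_1}$ holds automatically. Applying the real interpolation functor $(\cdot)_{\theta,p}$ to the sublinear $U$, together with the standard identifications $(L^{p_0},L^{p_1})_{\theta,p}=L^{p,p}=L^p(G)$ on the source and $(L^{r_0,\infty},L^{r_1,\infty})_{\theta,p}=L^{r,p}$ on the target, gives $U:L^p(G)\to L^{r,p}$, which is precisely \eqref{EQ:SLest}.

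Finally, \eqref{EQ:SLest2} follows because convergence of $\sum_k\frac1k(k^{1/r}h_f(k))^p$ forces $k^{1/r}h_f(k)\to0$: monotonicity of $h_f$ gives $\sum_{K\le k\le 2K}\frac1k(k^{1/r}h_f(k))^p\gtrsim(K^{1/r}h_f(2K))^p$, and the tail of a convergent series tends to $0$. Since $h_f(N(L))\ge\frac{1}{N(L)}\|S_Lf\|_{L^q}$ and $N(L)\to\infty$, this yields $N(L)^{1/r-1}\|S_Lf\|_{L^q}=N(L)^{1/q-1/p}\|S_Lf\|_{L^q}=o(1)$, and then \eqref{EQ:SLest4} is immediate from the Weyl asymptotics $N(L)\sim C_0L^n$ in \eqref{EQ:NL}. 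The steps requiring care are the two Lorentz-space interpolation identities and the comparability of the discrete sum with the Lorentz integral; the one genuine obstacle is the borderline value $p=1$, where $\frac1p=1$ cannot be placed strictly inside $(\frac{1}{p_1},\frac{1}{p_0})$ with both endpoints $\ge1$, so this case would need a separate limiting argument.
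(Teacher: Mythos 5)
Your argument is correct and is essentially the paper's own proof: both recast the left-hand side of \eqref{EQ:SLest} as a Lorentz quasi-norm of the sublinear map $f\mapsto \bigl\{\sup_{N(L)\geq k}N(L)^{-1}\|S_{L}f\|_{L^q(G)}\bigr\}_k$, deduce weak-type endpoint bounds $L^{p_i}\to \ell_{r_i,\infty}$ from \eqref{EQ:SLest-1} with $\frac{1}{r_i}=1-\frac{1}{p_i}+\frac1q$, interpolate to land in $\ell_{r,p}$, and then extract \eqref{EQ:SLest2} from the vanishing dyadic tails of the convergent series and \eqref{EQ:SLest4} from the Weyl asymptotics. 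The borderline issue at $p=1$ that you flag is equally present in the paper's proof (which also requires an endpoint exponent $p_0<p$), so your treatment is, if anything, slightly more careful on that point.
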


\begin{proof}
We have shown in (\ref{EQ:SLest-1}) that
\begin{equation}\label{EQ:SLest3}
N(L)^{-\frac 1p+\frac 1q} \|S_{L}f\|_{L^q(G)}\leq  \|f\|_{L^p(G)}.
\end{equation}
Let us define a quasi-linear operator  $A$ by
 $$
 Af := \left\{\sup_{N(L)\geq k}\frac1{N(L)}\|S_{L}f\|_{L^q(G)}\right\}_{k=1}^\infty.
 $$
Then \eqref{EQ:SLest3} implies that  $A$ is bounded from  $L_p(G)$ to $l_{r,\infty}$, where $1/r=1-1/p+1/q$
and 
 $l_{r,\infty}$ denotes the  Lorentz sequence space.
 Indeed,
$$
\|Af\|_{l_{r,\infty}}=\sup_k k^{1/r}\sup_{N(L)>k}\frac1{N(L)}\|S_{L}f\|_{L^q(G)}\leq \sup_k\sup_{N(L)>k}\frac{N(L)^{1/r}}{N(L)}\|S_{L}f\|_{L^q(G)}
\leq \|f\|_{L_p}.
$$
Let $(p_0,r_0)$ and $(p_1,r_1)$ be such that $$p_0<p<p_1, \quad r_0<r<r_1,\quad \frac{1}{r_0}+\frac{1}{p_0}=
\frac{1}{r_1}+\frac{1}{p_1}=\frac{1}{r}+\frac{1}{p}.$$
Then, using the interpolation  theorem
(see, e.g., \cite{Bergh-Lofstrom:BK-interpolation-spaces-1976}),
since
$A: L_{p_0}(G) \to l_{r_0,\infty}$ and $A: L_{p_1}(G) \to l_{r_1,\infty}$ are
bounded, we get that
$$A: L_{p}(G) \to l_{r,p}$$
is bounded, which is \eqref{EQ:SLest}.

Now,
for any $\xi\in \mathbb{N}$ we choose
$t\in \mathbb{N}:$ $2^t\le \xi^n < 2^{t+1}$. Then \eqref{EQ:SLest} implies
$$
\sum_{k=2^{t-1}}^{2^{t}-1} \frac{\left(k^{1-1/p+1/q}\sup_{N(L)\geq k}\frac1{N(L)}\|S_{L}f\|_{L^q(G)}\right)^p}k\to 0.
$$
The left-hand side is greater than
\begin{align*}
&
C
\left(2^{t(1-1/p+1/q)}
\sup_{N(L)\geq 2^{t}}\frac1{N(L)}  \|S_{L}f\|_{L^q(G)} \right)^p
\sum_{k=2^{t-1}}^{2^{t}-1} \frac{1}k
\\
&\ge
C\left(2^{t(1-1/p+1/q)}
\sup_{L^n\geq 2^t} \frac1{L^n}  \|S_{L}f\|_{L^q(G)} \right)^p
\ge C
\left(\frac{2^{t(1-1/p+1/q)}} {\xi^n}  \|S_{\xi}f\|_{L^q(G)}\right)^p
\\&
\ge C
\left(
{N(\xi)}^{1/q-1/p}\|S_{\xi}f\|_{L^q(G)}\right)^p,
\end{align*}
finishing the proof of \eqref{EQ:SLest2}.

Finally, \eqref{EQ:SLest4} follows from \eqref{EQ:SLest2}, using \eqref{EQ:NL}
\end{proof}
Note that a similar result for periodic functions can be found in
\cite{Bergh-Lofstrom:BK-interpolation-spaces-1976,Nursultanov:Steklov-2006}.
Also, for certain values of $p$, there are low bounds results for estimates in Corollary \ref{THM3-2}, showing that 
norms of some characters (depending on $L$)
should still go to infinity, see, e.g., \cite[Corollary 4]{Giulini-Soard-Travaglini:JFA-1982}.

\section{Besov spaces}
\label{SEC:Besov}
In this section we analyse embedding properties of Besov spaces on compact
homogeneous spaces $G/K$. By using the Fourier series \eqref{EQ:FSh-mat},
we define the Besov space
\begin{equation}\label{EQ:Besov1}
B^{r}_{p,q}=B^{r}_{p,q}(G/K)=\left\{f\in\Dcal'(G/K):\,\,\|f\|_{B^{r}_{p,q}}<\infty\right\},
\end{equation}
where
\begin{equation}\label{EQ:Besov2}
 \|f\|_{B^{r}_{p,q}} :=
 \left(\sum_{s=0}^\infty 2^{sr q} \Big\|\sum_{2^{s}\le\jp{\xi}<2^{s+1}}
 \dxi\ \Tr\p{\whf(\xi)\xi(x)}\Big\|_p^q\right)^{1/q}, \qquad q<\infty,
\end{equation}
and
\begin{equation}\label{EQ:Besov3}
 \|f\|_{B^{r}_{p,q}} :=
 \sup\limits_{s\in\mathbb{N}}
2^{sr } \Big\|\sum_{2^{s}\le\jp{\xi}<2^{s+1}}
 \dxi\ \Tr\p{\whf(\xi)\xi(x)}\Big\|_p, \qquad q=\infty.
\end{equation}
Here we allow $r\in\R$ and $0<p,q\leq\infty$. We also note that since we always have
$\jp{\xi}\geq 1$, the trivial
representation is included in this norm.

We first analyse these  Besov spaces using the global Littlewood-Paley theory, and
in Section \ref{SEC:localisations} we show that for certain ranges of indices
these spaces agree with the Besov spaces that could be defined on $G/K$
as a manifold, using the standard Besov spaces on $\Rn$ in local coordinates.

On unimodular Lie groups Besov spaces have been analysed in
\cite{Skr} in terms of the heat kernel, however no embedding theorems have been
proved. In addition,
using the Littlewood-Paley decomposition, one can also use the characterisation
given in \cite{FMV}. Using the Nikolskii inequality and the Fourier analysis in
Section \ref{SEC:Fourier}, we can establish embedding properties
for these spaces.

First, let us prove that the norm $ \|f\|_{B^{r}_{p,q}}$ is equivalent  to a certain
approximative characteristic of $f$.
\begin{prop}\label{proposit} Let $0<p\le\infty$, $0<q\leq\infty$, and $r\in\R$. We have
$$\|f\|_{B^{r}_{p,q}}\asymp \left|\int_{G/K} f(x) dx\right|+
 \left(\sum_{s=0}^\infty 2^{sr q} \Big\| f- S_{2^{s}}f
 \Big\|_p^q
 \right)^{1/q},
$$
where $S_{L}f$ is the  partial Fourier series of $f$, that is,
$S_{L}f(x)=\sum_{\jp{\xi}\leq L} d_{\xi}\ \Tr(\whf(\xi)\xi(x)).$
\end{prop}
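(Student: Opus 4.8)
The plan is to prove the equivalence in both directions by relating the dyadic Littlewood--Paley blocks appearing in \eqref{EQ:Besov2}--\eqref{EQ:Besov3} to the differences $f-S_{2^s}f$, using the telescoping structure of the partial sums. Writing $\Delta_s f(x):=\sum_{2^s\le\jp{\xi}<2^{s+1}}d_\xi\Tr(\whf(\xi)\xi(x))$ for the $s$-th dyadic block, the starting observation is that $S_{2^{s+1}}f - S_{2^s}f=\Delta_s f$ (up to the precise placement of the endpoints $\jp{\xi}=2^{s+1}$, which must be handled carefully but contributes only a harmless index shift). Thus both quantities in the claimed equivalence are built from the same underlying frequency-localized pieces, and the task reduces to comparing $\|\Delta_s f\|_p$ with $\|f-S_{2^s}f\|_p$ in a weighted $\ell^q$ sense with weights $2^{sr}$. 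The term $\left|\int_{G/K}f\,dx\right|$ on the right is just $|\whf(\mathrm{triv})|$ up to normalization, and it accounts for the trivial representation ($\jp{\xi}=1$, i.e.\ $s=0$) which is why it appears separately.

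I would organize the argument in two inequalities. For the direction $\asymp$ bounding the approximative characteristic by $\|f\|_{B^r_{p,q}}$: since $f-S_{2^s}f=\sum_{m\ge s}\Delta_m f$ (the tail of the telescoping series), I would estimate $\|f-S_{2^s}f\|_p\le \sum_{m\ge s}\|\Delta_m f\|_p$ (using the quasi-triangle inequality for $p<1$ with the appropriate power, otherwise the ordinary triangle inequality), then apply a standard Hardy-type inequality for weighted $\ell^q$ sums to convert the weighted sum over $s$ of these tails into the weighted sum of $\|\Delta_m f\|_p$, which is exactly $\|f\|_{B^r_{p,q}}$. For the reverse direction: write $\Delta_s f=(f-S_{2^s}f)-(f-S_{2^{s+1}}f)$, so that $\|\Delta_s f\|_p\lesssim \|f-S_{2^s}f\|_p+\|f-S_{2^{s+1}}f\|_p$, and summing with the weights $2^{srq}$ recovers $\|f\|_{B^r_{p,q}}$ up to the index shift, with the $s=0$ block contributing the integral term.

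The main obstacle I expect is the bookkeeping of the convergence of the Hardy-inequality step for the full range $0<p,q\le\infty$ and $r\in\R$, especially the quasi-norm corrections when $p<1$ or $q<1$ where the triangle inequality fails and one must instead use $\|a+b\|_p^{\min(p,1)}\le\|a\|_p^{\min(p,1)}+\|b\|_p^{\min(p,1)}$; the Hardy inequality then requires $r$ of a definite sign, or else one splits into the cases $r>0$ (where the tail sum $\sum_{m\ge s}$ is controlled) and handles $r\le 0$ by the dual summation direction. A secondary technical point is justifying that $f-S_{2^s}f\to 0$ appropriately so the telescoping identity $f-S_{2^s}f=\sum_{m\ge s}\Delta_m f$ holds in $\Dcal'(G/K)$, which follows from $f\in\Dcal'(G/K)$ together with the assumption that the right-hand side is finite. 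These are routine but must be stated with care; the conceptual content is entirely the telescoping relation between blocks and tails combined with the discrete Hardy inequality.
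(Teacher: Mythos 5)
Your proposal follows essentially the same route as the paper's proof: the telescoping identity $f-S_{2^s}f=\sum_{m\ge s}a_m$ with the discrete Hardy inequality (split into the cases $q<1$ and $q\ge 1$) for one direction, and the decomposition $a_s=(f-S_{2^s}f)-(f-S_{2^{s+1}}f)$ for the reverse, with the integral term isolating the trivial representation. Your remarks on the quasi-norm corrections for $p<1$ and on the Hardy step requiring $r$ of a definite sign are apt --- the paper's own argument silently uses $\sum_{s=0}^{k}2^{srq}\lesssim 2^{krq}$, which needs $r>0$ --- but this does not change the method.
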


\begin{proof}
First we observe that if $1$ is the trivial representation of $G$, we have
$|S_{1}f|=|\widehat{f}(1)|=|\int_{G/K} f(x) dx|$,
and that without loss of generality we can assume that
$\widehat{f}(1)=0$.
Denote
$$a_s:= S_{2^{s+1}}f(x)-
S_{2^{s}}f(x)=\sum_{2^{s}<\jp{\xi}\le 2^{s+1}}
 \dxi\ \Tr\p{\whf(\xi)\xi(x)}.$$
 With this notation we can write
$$
\|f\|_{B^{r}_{p,q}} \asymp
 \left(\sum_{s=0}^\infty 2^{sr q} \big\| a_s\big\|_p^q\right)^{1/q}.
$$
We first show ``$\gtrsim$".
 If $0<q<1$, then
\begin{eqnarray*}
 \left(\sum_{s=0}^\infty 2^{sr q} \Big\| f- S_{2^{s}}f \Big\|_p^q \right)^{1/q}
 &\asymp&
  \left(\sum_{s=0}^\infty 2^{sr q} \Big\| \sum_{k=s}^\infty a_k \Big\|_p^q \right)^{1/q}
  \\
 &\le&
  \left(\sum_{s=0}^\infty 2^{sr q}  \sum_{k=s}^\infty \big\|a_k \big\|_p^q \right)^{1/q}
  \\
 &\le&
  \left(\sum_{k=0}^\infty \big\|a_k \big\|_p^q \sum_{s=0}^k 2^{sr q}   \right)^{1/q}
  \\
& \lesssim  &
  \left(\sum_{k=0}^\infty 2^{kr q}  \big\|a_k \big\|_p^q   \right)^{1/q}
  \asymp\|f\|_{B^{r}_{p,q}}.
\end{eqnarray*}
If $q\ge 1$, using Hardy's inequalities, we also get
\begin{eqnarray*}
 \left(\sum_{s=0}^\infty 2^{sr q} \Big\| f- S_{2^{s}}f \Big\|_p^q \right)^{1/q}
 &\le&
  \left(\sum_{s=0}^\infty 2^{sr q} \left( \sum_{k=s}^\infty \big\|a_k \big\|_p\right)^q \right)^{1/q}
  \\
 &\lesssim&
  \left(\sum_{k=0}^\infty 2^{kr q}  \big\|a_k \big\|_p^q   \right)^{1/q}
  \asymp\|f\|_{B^{r}_{p,q}}.
\end{eqnarray*}
Indeed, if $q\ge 1,$ $\varepsilon>0$ and  $\alpha_k\ge 0 $, the Hardy inequality asserts that
$$\sum\limits_{s=0}^\infty 2^{\varepsilon s}\left(\sum_{k =s}^\infty \alpha_k\right)^q \le C(q, \varepsilon)\sum\limits_{s=0}^\infty 2^{\varepsilon s} \alpha_s^q.
$$
To prove the part ``$\lesssim$", we write $a_k=a_k+ f -f$
to obtain
\begin{eqnarray*}
\|f\|_{B^{r}_{p,q}}
 &\asymp&
  \left(\sum_{k=0}^\infty 2^{kr q}  \big\|a_k \big\|_p^q   \right)^{1/q}
  \lesssim
 \left(\sum_{s=0}^\infty 2^{sr q} \Big\| f- S_{2^{s}}f \Big\|_p^q \right)^{1/q}
 ,
\end{eqnarray*}
completing the proof.
\end{proof}

For $r\in\R$, we denote by $H_{p}^{r}$ the Sobolev space on $G/K$ defined in
local coordinates, i.e. the space of distributions such that in each local
coordinate systems they belong to the usual Sobolev spaces
 $H_{p}^{r}(\Rn)$. By ellipticity it can be described as the set of all $f\in\Dcal'(G/K)$ such that
we have $(1-\Lap_{G/K})^{r/2}f\in L_{p}(G/K)$. Writing the Fourier series for the
lifting of $f$ to $G$, we see from \eqref{EQ:FSh-mat} that
the Fourier series of $(1-\Lap_{G/K})^{r/2}f$ is
given by $\sum_{[\xi]\in\Gh_{0}} \dxi\ \jp{\xi}^{r}\Tr\p{\whf(\xi)\xi(x)}$,
and hence we have
\begin{equation}\label{EQ:Sobolev-norms}
\|f\|_{ H_{p}^{r}}
\asymp
\left\|
\sum_{[\xi]\in\Gh_{0}} \dxi\ \jp{\xi}^{r}\Tr\p{\whf(\xi)\xi(x)}   \right\|_{p}.
\end{equation}
We will often use Plancherel's identity in the following form:
\begin{equation}\label{EQ:Plancherel-partial}
\left\|\sumxip \dxi\ \Tr\p{\whf(\xi)\xi(x)}\right\|_{2}=\p{\sumxip \dxi \whfhs^{2}}^{1/2},
\end{equation}
which holds for $L^{2}(G/K)$ if we use our convention of having zeros in
$\xi(x)$, the equality
$\|\widetilde{f}\|_{L^{2}(G)}=\|f\|_{L^{2}(G/K)}$ in our choice of normalisation of
measures, and apply the Plancherel's identity on $G$.

\medskip
We now collect the embedding properties of $B_{p,q}^{r}$ in the following theorem.
\begin{thm}\label{THM:Besov}
Let $G/K$ be a compact homogeneous space of dimension $n$. Below, we allow
$r\in\R$ unless stated otherwise.
We have
\begin{itemize}

\item[(1)] \  \   $B_{p,q_1}^{r+\varepsilon}\hookrightarrow B_{p,q_1}^{r}\hookrightarrow B_{p,q_2}^{r}\hookrightarrow B_{p,\infty}^{r},
\qquad 0<\varepsilon,
\quad 0<p\leq\infty,\; 0<q_1\le q_2\le \infty;$

\item[(2)] \ \
$B_{p,q_1}^{r+\varepsilon}\hookrightarrow B_{p,q_2}^{r}, \qquad 0<\varepsilon, \quad
0<p\leq\infty,\; 1\leq q_2<q_1<\infty;$

\item[(3)] \  \
  $B_{2,2}^{r}=H^{r};$

\item[(4)] \ \
$H^{r+\varepsilon}\hookrightarrow B_{2,q}^{r}, \qquad \varepsilon, q>0;$

\item[(5)] \ \
$B_{p_1,q}^{r_1}\hookrightarrow B_{p_2,q}^{r_2}, \qquad
0<p_1\le p_2\leq\infty,\; 0<q<\infty,\; r_2= r_1 - n(\frac{1}{p_1}-\frac{1}{p_2});$

\item[(6)] \ \
$B_{p,p}^{r}\hookrightarrow H^r_p \hookrightarrow B_{p,2}^{r}, \qquad 1<p\le 2;$

\item[(7)] \ \
$B_{p,2}^{r}\hookrightarrow H^r_p \hookrightarrow B_{p,p}^{r}, \qquad 2\le p<\infty;$

\item[(8)] \ \
$B_{p,1}^r\hookrightarrow L_q, \qquad
0< p<q\le\infty, \; r=n(\frac1p-\frac1q);$

\item[(9)] \ \
$B_{p,q}^r\hookrightarrow L_q, \qquad
1<p<q<\infty, \; r=n(\frac1p-\frac1q)$.
\end{itemize}
\end{thm}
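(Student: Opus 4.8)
The statement is Theorem \ref{THM:Besov}, a list of embedding properties (1)–(9) for Besov spaces on $G/K$. Let me sketch proofs for each.

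Let me think about how I would prove each of these nine embeddings.

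**Setup/notation.** Recall $a_s := \sum_{2^s \le \langle\xi\rangle < 2^{s+1}} d_\xi \Tr(\hat f(\xi)\xi(x))$, so $\|f\|_{B^r_{p,q}} \asymp (\sum_s 2^{srq}\|a_s\|_p^q)^{1/q}$. Each $a_s$ is a trigonometric polynomial of degree $< 2^{s+1}$ supported on the dyadic annulus $2^s \le \langle\xi\rangle < 2^{s+1}$. The key tools are: (i) the Nikolskii inequality (Theorem \ref{THM2}), which on each annulus gives $\|a_s\|_q \lesssim 2^{sn(1/p-1/q)}\|a_s\|_p$ since $N(2^{s+1}) \asymp 2^{sn}$ by \eqref{EQ:NL}; (ii) Plancherel in the partial form \eqref{EQ:Plancherel-partial}; (iii) the Hausdorff–Young inequality \eqref{EQ:HY}; and (iv) the $\ell^p$-embedding \eqref{EQ:cpt-lp-lq}.

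**Items (1), (2).** These are "trivial" embeddings in the smoothness and fine indices. For (1): $B^{r+\varepsilon}_{p,q_1} \hookrightarrow B^r_{p,q_1}$ follows because $2^{sr} \le 2^{s(r+\varepsilon)}$; the monotonicity $q_1 \le q_2$ in $\ell^{q}$-norms of the sequence $\{2^{sr}\|a_s\|_p\}$ gives $B^r_{p,q_1}\hookrightarrow B^r_{p,q_2}$, using the scalar inequality $\ell^{q_1}\hookrightarrow\ell^{q_2}$. For (2), when $q_2 < q_1$ but we gain $\varepsilon$ in smoothness, I would use Hölder's inequality on the sequence side: writing $2^{sr}\|a_s\|_p = 2^{-s\varepsilon}\cdot 2^{s(r+\varepsilon)}\|a_s\|_p$ and applying Hölder with exponents $q_1/q_2$ and its conjugate, the factor $\sum_s 2^{-s\varepsilon q_1 q_2/(q_1-q_2)}$ converges.

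**Items (3), (4), (6), (7).** These relate Besov and Sobolev spaces. For (3) $B^r_{2,2}=H^r$: apply Plancherel \eqref{EQ:Plancherel-partial} to each $a_s$, so $\|a_s\|_2^2 = \sum_{2^s\le\langle\xi\rangle<2^{s+1}} d_\xi\|\hat f(\xi)\|_{\HS}^2$, and since $\langle\xi\rangle\asymp 2^s$ on the annulus, $\sum_s 2^{2sr}\|a_s\|_2^2 \asymp \sum_\xi d_\xi \langle\xi\rangle^{2r}\|\hat f(\xi)\|_{\HS}^2 \asymp \|f\|_{H^r}^2$ by \eqref{EQ:Sobolev-norms} with $p=2$. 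Item (4) $H^{r+\varepsilon}\hookrightarrow B^r_{2,q}$ combines (3)=$B^r_{2,2}$ with (1) for the tail index and an $\varepsilon$-smoothness trade as in (2). Items (6) and (7) are the sharp Littlewood–Paley/Hausdorff–Young relations for $1<p\le 2$ and $2\le p<\infty$; these I would obtain from the Hausdorff–Young inequality \eqref{EQ:HY} applied annulus-by-annulus, comparing $\|a_s\|_p$ to $\|\hat f\|_{\ell^{p'}}$ restricted to the annulus, together with the embedding \eqref{EQ:cpt-lp-lq} between the discrete $\ell^p$-spaces.

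**Items (5), (8), (9) — the Nikolskii core.** These are the substantive embeddings and the main obstacle. For (5), the Sobolev-type embedding with $r_2 = r_1 - n(1/p_1 - 1/p_2)$: apply Nikolskii (Theorem \ref{THM2}) to each annulus block $a_s$, giving $\|a_s\|_{p_2} \lesssim 2^{sn(1/p_1-1/p_2)}\|a_s\|_{p_1}$; then $2^{sr_2}\|a_s\|_{p_2} \lesssim 2^{sr_2}2^{sn(1/p_1-1/p_2)}\|a_s\|_{p_1} = 2^{sr_1}\|a_s\|_{p_1}$, and summing in $\ell^q$ gives $\|f\|_{B^{r_2}_{p_2,q}}\lesssim\|f\|_{B^{r_1}_{p_1,q}}$. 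For (8), $B^r_{p,1}\hookrightarrow L_q$ with $r=n(1/p-1/q)$: write $f-\hat f(1) = \sum_s a_s$, estimate $\|f\|_q \le |\hat f(1)| + \sum_s\|a_s\|_q$ (using $q\ge 1$, or the quasi-triangle bound for $q<1$), and apply Nikolskii to each term, $\|a_s\|_q \lesssim 2^{sn(1/p-1/q)}\|a_s\|_p = 2^{sr}\|a_s\|_p$; summing gives exactly the $B^r_{p,1}$ norm. Item (9) is the harder refinement with $q$ in place of $1$ in the fine index, valid for $1<p<q<\infty$; here summing $\sum_s\|a_s\|_q$ directly loses too much, so I would instead use the Nikolskii bound in each annulus together with a vector-valued/Littlewood–Paley argument or real interpolation between the endpoint (8) and a trivial embedding, exploiting $1<p,q<\infty$ to invoke the $L_q$-boundedness of the Littlewood–Paley square function on $G/K$. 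The delicate point throughout is ensuring the constants from $N(2^{s+1})\asymp 2^{sn}$ are uniform in $s$ (guaranteed by \eqref{EQ:NL1}), and in (9) obtaining the summability over $s$ without the crude triangle inequality — that is where I expect the real work to lie.
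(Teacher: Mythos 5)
Your treatment of (1), (2), (3), (4), (5) and (8) matches the paper's proof: monotonicity of $\ell^q$ norms and H\"older on the sequence side for (1)--(2), Plancherel \eqref{EQ:Plancherel-partial} for (3), (1)+(3) for (4), the annulus-by-annulus Nikolskii inequality with $N(2^{s+1})\asymp 2^{sn}$ for (5), and the triangle inequality plus Nikolskii for (8). For (9) your second option (real interpolation) is the paper's route, though you leave it underspecified: the paper interpolates the identity between \emph{two instances of (8)}, $B_{p,1}^{r_0}\to L_{q_0}$ and $B_{p,1}^{r_1}\to L_{q_1}$ with $n(1/p-1/q_i)=r_i$, and uses $(B_{p,1}^{r_0},B_{p,1}^{r_1})_{\theta,q}=B_{p,q}^{r}$ together with $(L_{q_0},L_{q_1})_{\theta,q}=L_q$; interpolating (8) against ``a trivial embedding'' would not produce the stated result, since both the source smoothness and the target integrability must vary along the interpolation scale.

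The genuine gap is in (6) and (7). You propose to obtain $B_{p,p}^{r}\hookrightarrow H^r_p \hookrightarrow B_{p,2}^{r}$ ($1<p\le 2$) from the Hausdorff--Young inequality applied annulus-by-annulus together with the embeddings \eqref{EQ:cpt-lp-lq}. This cannot work: both sides of these embeddings are built from $L^p$-norms (of $f$ through $\|f\|_{H^r_p}$, and of the blocks $a_s$ through $\|a_s\|_p$), whereas Hausdorff--Young only converts between $L^p$ and $\ell^{p'}$ quantities, and for $1<p<2$ the direction you would need ($\|\widehat{a_s}\|_{\ell^p(\Gh_0)}\lesssim\|a_s\|_{L^p}$, or $\|a_s\|_{L^p}$ controlled by $\|f\|_{H^r_p}$ via Fourier coefficients in $\ell^p$) is exactly the false direction of Hausdorff--Young. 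For instance, chasing $\|a_s\|_p\le\|a_s\|_{p'}\le\|\widehat{a_s}\|_{\ell^p(\Gh_0)}$ leaves you needing to dominate an $\ell^p$ quantity of Fourier coefficients by $\|f\|_{L^p}$, which fails for $p<2$. The paper instead invokes the Littlewood--Paley square-function theorem (from \cite{FMV}, applied to functions constant on right cosets), i.e.
$\|f\|_{H^r_p}\asymp \bigl\|\bigl(\sum_s 2^{2sr}|a_s|^2\bigr)^{1/2}\bigr\|_p$,
and then deduces $B_{p,p}^{r}\hookrightarrow H^r_p$ from the pointwise inequality $(\sum_k a_k^2)^{1/2}\le(\sum_k a_k^p)^{1/p}$, and $H^r_p\hookrightarrow B_{p,2}^{r}$ from Minkowski's integral inequality with exponent $2/p\ge 1$. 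This square-function input is indispensable here and is missing from your sketch; without it (6), (7), and hence also the paper's proof of the interpolation identity (iii) of Theorem \ref{THM:interpolation} which relies on them, are not established.
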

We note that (6) and (7) can be rewritten as
\begin{itemize}
\item[(6')] \ \
$B_{p,\min\{p,2\}}^{r}\hookrightarrow H^r_p \hookrightarrow B_{p,\max\{p,2\}}^{r},
\qquad 1<p<\infty.$
\end{itemize}

\begin{proof}[Proof of Theorem \ref{THM:Besov}]

\medskip
\noindent
(1) These embeddings follow from
$$\sup_s a_{s}\le(\sum_{s} a_{s}^{q_2})^{1/q_2}\le (\sum_{s} a_{s}^{q_1})^{1/q_1}
\le (\sum_{s} 2^{s\varepsilon q_1}a_{s}^{q_1})^{1/q_1}$$
for a sequence $a_{s}\ge 0$ and $\varepsilon>0$.

\medskip
\noindent
(2) Using $q_2<q_1$, by H\"{o}lder's inequality, we get
\begin{eqnarray*}
\|f\|_{B_{p,q_2}^{r}}
&\asymp&
\left(\sum_s \frac{2^{s q_2 (r+\varepsilon)}}{2^{s q_2 \varepsilon }} \left\|
\sumxip \dxi\ \Tr\p{\whf(\xi)\xi(x)}  \right\|_p^{q_2}\right)^{1/q_2}
\\&\le &
\left(\sum_s {2^{s q_1 (r+\varepsilon)}}  \left\|
\sumxip \dxi\ \Tr\p{\whf(\xi)\xi(x)}\right\|_p^{q_1}\right)^{1/q_1} \times \\
& & \qquad \times
\left(\sum_s \frac{1}{2^{sq_1q_2/(q_1-q_2) \varepsilon}} \right)^{(q_1-q_2)/(q_1q_2)}
\\&\le& C
\|f\|_{ B_{p,q_1}^{r+\varepsilon}}.
\end{eqnarray*}

\medskip
\noindent
(3)  Using Plancherel's identity \eqref{EQ:Plancherel-partial}
we get
\begin{eqnarray*}
\|f\|_{B_{2,2}^{r}} & = &
\left(
\sum_s
2^{2sr}
\left\|\sumxip  \dxi\ \Tr\p{\whf(\xi)\xi(x)}\right\|^2_{2}\right)^{1/2} \\
& \asymp &
\left( \sum_s \left(\sumxip \jp{\xi}^{2r} \dxi \whfhs^2\right)\right)^{1/2} \\
& \asymp &
\|\jp{\xi}^{r}\whf\|_{\ell^2(\Gh_{0})}
= \|f\|_{H^{r}}.
\end{eqnarray*}

\medskip
\noindent
(4) This embedding follows from properties (1)--(3).

\medskip
\noindent
(5) Using Nikolskii's inequality from Theorem \ref{THM2},
\begin{eqnarray*}
\|f\|_{ B_{p_2,q}^{r_2}}
&\asymp&
\left(\sum_s 2^{s q r_2} \left\|\sumxip \dxi\ \Tr\p{\whf(\xi)\xi(x)} \right\|_{p_2}^{q}\right)^{1/q}
\\&\le& C
\left(\sum_s 2^{s q r_2} 2^{s q n(\frac{1}{p_1}-\frac{1}{p_2})} \left\|
\sumxip \dxi\ \Tr\p{\whf(\xi)\xi(x)}  \right\|_{p_1}^{q}\right)^{1/q}
\asymp
\|f\|_{ B_{p_1,q}^{r_1}}.
\end{eqnarray*}

\medskip
\noindent
(6) First,  by the Littlewood-Paley theorem (see \cite{FMV}), applied to
functions which are constant on right cosets, we get
\begin{eqnarray}\label{EQ:Littlewood-Paley}
\|f\|_{ H_{p}^{r}}
&\asymp&
\left\|
\sum_s 2^{s r} \sumxip \dxi\ \Tr\p{\whf(\xi)\xi(x)}   \right\|_{p} 
\nonumber
\\&\asymp&
\left\|\left[\sum_s 2^{2s r} \left|\sumxip \dxi\ \Tr\p{\whf(\xi)\xi(x)} \right|^2\right]^{1/2} \right\|_{p}. 
\end{eqnarray}
Since we have already shown the case $p=2$ in (3), we can assume that $1<p<2$.
Using the inequality
$(\sum_k a_{k}^{2})^{1/2}\le (\sum_k a_{k}^{p})^{1/p}, a_{k}\ge 0$, we obtain
\begin{eqnarray*}
\|f\|_{ H_{p}^{r}}
&\lesssim&
\left\|\left[\sum_s 2^{p s r} \left | \sumxip
\dxi\ \Tr\p{\whf(\xi)\xi(x)}
\right|^p\right]^{1/p} \right\|_{p}
\\
&\leq&
\left(\sum_s 2^{ps r} \left\| \sumxip
\dxi\ \Tr\p{\whf(\xi)\xi(x)}  \right\|^p_p  \right)^{1/p}
=\|f\|_{ B_{p,p}^{r}}.
\end{eqnarray*}
On the other hand, using
Minkowski inequality $(\sum_j(\int f_j)^\alpha)^{1/\alpha}\le \int(\sum_j f_j^\alpha)^{1/\alpha}$
for $\alpha>1$ and $f_{j}\geq 0$,
we get for $\alpha=2/p$,
\begin{eqnarray*}
\|f\|_{ B_{p,2}^{r}}&=&
\left(\sum_s 2^{2 s r} \left\| \sumxip\dxi\ \Tr\p{\whf(\xi)\xi(x)}  \right\|^2_p  \right)^{1/2}
\\&\le&
\left(
\int\left[ \sum_s 2^{2 s r}  \left| \sumxip
\dxi\ \Tr\p{\whf(\xi)\xi(x)}  \right|^2 \right]^{p/2} dx \right)^{1/p}
\\&\le&
\left\|\left[\sum_s 2^{2s r} \left|\sumxip
\dxi\ \Tr\p{\whf(\xi)\xi(x)}  \right|^2\right]^{1/2} \right\|_{p} 
\asymp \|f\|_{ H_{p}^{r}},
\end{eqnarray*}
using \eqref{EQ:Littlewood-Paley} in the last equivalence of norms again.
The proof of (7) is similar.

\medskip
\noindent
{(8)}  Using the Fourier series representation \eqref{EQ:FSh-mat} and
Nikolskii's inequality from Theorem \ref{THM2},  we get
\begin{eqnarray*}
\|f\|_{L_q} & \leq& \sum_{s=0}^\infty \left\|\sumxip
\dxi\ \Tr\p{\whf(\xi)\xi(x)}\right\|_{L_q}\\
&\leq&\sum_{s=0}^\infty 2^{sn(\frac1p-\frac1q)}\left\|\sumxip
\dxi\ \Tr\p{\whf(\xi)\xi(x)}\right\|_{L_p}=\|f\|_{{B}_{p,1}^r}.
\end{eqnarray*}

\medskip
\noindent
{(9)} We show that (9) follows from (8).
Let $F$ be such that $F: B_{p,1}^{r} \hookrightarrow L_q$. Then for parameters
$p,\;q,\;r$ one can find couples  $(q_0,\;r_0),$ $(q_1,\;r_1)$ and $\theta\in (0,1)$ such that
$$
n\Big(\frac1p-\frac1{q_0}\Big)=r_0,\qquad
n\Big(\frac1p-\frac1{q_1}\Big)=r_1,
\qquad
r_0<r<r_1,
$$
and
$$r=(1-\theta)r_0+\theta r_1, \qquad\frac1q=\frac{1-\theta}{q_0}+\frac{\theta}{q_1}.$$
Since
$
F:\;B_{p,1}^{r_0}\rightarrow L_{q_0}
$
and
$
F:\;B_{p,1}^{r_1}\rightarrow L_{q_1},
$
then by the interpolation theorems
$$
F:\;B_{p,q}^r=\left(B_{p,1}^{r_0},\;\;B_{p,1}^{r_1}\right)_{\theta, q}\rightarrow \left(L_{q_0},\;L_{q_1}\right)_{\theta,q}=L_q,
$$
i.e.,  $B_{p,q}^{r}\hookrightarrow L_q$ with $n(\frac1p-\frac1q)=r.$
We will discuss the interpolation properties of the Besov spaces in more detail
in Section \ref{interp} below.
\end{proof}

\subsection{Triebel--Lizorkin spaces}

Similarly to Besov spaces one defines the Triebel--Lizorkin spaces as follows:
\[
F^{r}_{p,q}=F^{r}_{p,q}(G/K)=\left\{f\in\Dcal'(G/K):\,\,\|f\|_{F^{r}_{p,q}}<\infty\right\},
\]
where
\[
 \|f\|_{F^{r}_{p,q}} :=
\Bigg\| \Bigg(\sum_{s=0}^\infty 2^{sr q} \, \Big|\sum_{2^{s}\le\jp{\xi}<2^{s+1}}
 \dxi\ \Tr\p{\whf(\xi)\xi(x)}\Big|^q\Bigg)^{1/q}\Bigg\|_p.
\]

Let us mention several embedding properties of the spaces $F_{p,q}^{r}$.
\begin{thm}\label{THM:Tr-Li}
Let $G/K$ be a compact homogeneous space of dimension $n$. Below, we allow
$r\in\R$ unless stated otherwise.
We have
\begin{itemize}

\item[(1)] \  \   $F_{p,q_1}^{r+\varepsilon}\hookrightarrow F_{p,q_1}^{r}\hookrightarrow F_{p,q_2}^{r}\hookrightarrow F_{p,\infty}^{r},
\qquad 0<\varepsilon,
\quad 0<p\leq\infty,\; 0<q_1\le q_2\le \infty;$

\item[(2)] \ \
$F_{p,q_1}^{r+\varepsilon}\hookrightarrow F_{p,q_2}^{r}, \qquad 0<\varepsilon, \quad
0<p\leq\infty,\; 1\leq q_2<q_1<\infty;$

\item[(3)] \  \
  $F_{p,p}^{r}=B_{p,p}^{r},
  \qquad
0<p<\infty;
  $

\item[(4)] \ \
$B_{p,\min\left\{p,q\right\}}^{r}\hookrightarrow F_{p,q}^{r}\hookrightarrow B_{p,\max\left\{p,q\right\}}^{r},
\qquad
0<p<\infty,\; 0<q\le\infty$.
\end{itemize}
\end{thm}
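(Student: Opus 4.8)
The plan is to reduce everything to the corresponding Besov statements of Theorem~\ref{THM:Besov} by carrying out each estimate \emph{pointwise in} $x$ and only afterwards taking the outer $L^{p}$-norm. Writing the $s$-th Littlewood--Paley block as
\[
\delta_s f(x):=\sum_{2^{s}\le\jp{\xi}<2^{s+1}}\dxi\,\Tr\p{\whf(\xi)\xi(x)},
\]
we have $\|f\|_{F^{r}_{p,q}}=\big\|\,\|(2^{sr}\delta_s f(\cdot))_s\|_{\ell^{q}}\,\big\|_{p}$ while $\|f\|_{B^{r}_{p,q}}=\|(2^{sr}\|\delta_s f\|_{p})_s\|_{\ell^{q}}$, so the two scales differ only in the order in which the $\ell^{q}$- and $L^{p}$-norms are taken.

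Items (1) and (2) then come for free. For (1), the elementary chain $\sup_s b_s\le\|(b_s)\|_{\ell^{q_2}}\le\|(b_s)\|_{\ell^{q_1}}\le\|(2^{s\varepsilon}b_s)\|_{\ell^{q_1}}$ (valid for $0<q_1\le q_2$ and $b_s\ge0$) is applied with $b_s=2^{sr}|\delta_s f(x)|$ at each fixed $x$; taking $\|\cdot\|_{p}$ of both extreme terms and using monotonicity of $L^{p}$ on nonnegative functions yields all three embeddings at once. For (2), Hölder's inequality in $s$ with exponents $q_1/q_2$ and its conjugate, applied pointwise in $x$ to $2^{srq_2}|\delta_s f(x)|^{q_2}=2^{-s\varepsilon q_2}\,2^{s(r+\varepsilon)q_2}|\delta_s f(x)|^{q_2}$, gives the pointwise bound $\|(2^{sr}\delta_s f(x))_s\|_{\ell^{q_2}}\le C\,\|(2^{s(r+\varepsilon)}\delta_s f(x))_s\|_{\ell^{q_1}}$ with $C=\big(\sum_s 2^{-s\varepsilon q_1q_2/(q_1-q_2)}\big)^{(q_1-q_2)/(q_1q_2)}<\infty$ independent of $x$; taking $\|\cdot\|_{p}$ then gives (2). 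For (3) with $q=p<\infty$, Tonelli's theorem lets me interchange $\sum_s$ and $\int_{G/K}$, so that $\|f\|_{F^r_{p,p}}^p=\sum_s 2^{srp}\|\delta_s f\|_p^p=\|f\|_{B^r_{p,p}}^p$; hence $F^r_{p,p}=B^r_{p,p}$ with equal (quasi-)norms (this is also the diagonal case of (4)).

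The substance of the theorem is (4), whose engine is the generalized Minkowski inequality for mixed norms. I first record two one-line comparisons, valid for $0<p<\infty$: \emph{(i)} if $u\le p$ then $\|f\|_{F^r_{p,u}}\le\|f\|_{B^r_{p,u}}$, obtained by applying the integral Minkowski inequality $\|\sum_s h_s\|_{L^{p/u}}\le\sum_s\|h_s\|_{L^{p/u}}$ (legitimate since $p/u\ge1$) to $h_s=2^{sur}|\delta_s f|^{u}$ and then extracting the $u$-th root; \emph{(ii)} if $p\le v\le\infty$ then $\|f\|_{B^r_{p,v}}\le\|f\|_{F^r_{p,v}}$, obtained from ``the norm of an integral is at most the integral of the norm'' for the $\ell^{v/p}$-norm (with $v/p\ge1$, the case $v=\infty$ being the trivial $\sup_s\int\le\int\sup_s$) applied to the sequence-valued map $x\mapsto(2^{spr}|\delta_s f(x)|^{p})_s$. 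Concatenating these with the $\ell^{q}$-monotonicity already used in (1) finishes the proof: with $u:=\min\{p,q\}\le q$ one gets $\|f\|_{F^r_{p,q}}\le\|f\|_{F^r_{p,u}}\le\|f\|_{B^r_{p,u}}$, i.e. $B^r_{p,\min\{p,q\}}\hookrightarrow F^r_{p,q}$; and with $v:=\max\{p,q\}\ge q$ one gets $\|f\|_{B^r_{p,v}}\le\|f\|_{F^r_{p,v}}\le\|f\|_{F^r_{p,q}}$, i.e. $F^r_{p,q}\hookrightarrow B^r_{p,\max\{p,q\}}$.

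The only genuinely delicate point is keeping the two directions of the Minkowski inequality straight: (i) pulls the inner $\ell^{u}$-norm outward and holds \emph{precisely} because $u\le p$, whereas (ii) pulls the outer $\ell^{v}$-norm inward and holds \emph{precisely} because $v\ge p$. The constraints $p/u\ge1$ and $v/p\ge1$ are exactly what let the argument run over the whole range $0<p<\infty$ (including $p<1$, where $L^{p}$ itself is only a quasi-norm), and the endpoint $q=\infty$ is absorbed harmlessly through the $\sup$-version of (ii). No Nikolskii input is required here; (4) is pure functional analysis on mixed-norm spaces, and (1)--(3) are its soft byproducts.
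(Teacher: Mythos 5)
Your proof is correct, and it is precisely the argument the paper has in mind: the paper omits the details, saying only that the proof is "similar to the proof of Theorem \ref{THM:Besov}, see also \cite{tribel}", and your pointwise-in-$x$ reductions for (1)--(3) together with the two directions of the generalized Minkowski inequality for (4) are the standard way (and the analogue of the paper's proof of parts (6)--(7) of Theorem \ref{THM:Besov}) to fill in those details. The bookkeeping of when $p/u\ge 1$ versus $v/p\ge 1$ applies is handled correctly, including the endpoint $q=\infty$ and the quasi-norm range $p<1$.
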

The proof of this Theorem is similar to the proof of Theorem \ref{THM:Besov}, see also \cite{tribel}.

\section{Wiener and $\beta$-Wiener spaces}
\label{SEC:Wiener}

Let us recall the definition of the Wiener algebra $A$ of absolutely convergent Fourier series
on the circle $\T^{1}$ (see, e.g., Kahane's book \cite{kahane}):
\[
A(\T^{1})=\left\{f:\,\,\|f\|_{A(\T^{1})} =\sum_{j=-\infty}^\infty |\widehat{f}(j)|<\infty\right\}.
\]
With the norm $\ell^{1}(\Gh_{0})$ in \eqref{EQ:Lp-sigmaGK}
this corresponds to
$$
A(G/K)=\left\{f\in \Dcal'(G/K):\,\,
\|f\|_{A(G/K)}:=\|\widehat{f}\|_{\ell^{1}(\Gh_{0})}=\sumxi d_{\xi} k_{\xi}^{1/2} \whfhs<\infty
\right\}.
$$
We will abbreviate this norm to $\|\cdot\|_{A}$.
The main problem here is to determine which smoothness of $f$ guarantees
the absolute convergence of the Fourier series.

It is known that in the case of a unitary group $G$,
 if $f\in C^{k}(G)$ with an even $k>\frac{\dim G}{2}$, then $\whf\in\ell^{1}(\Gh)$ and
hence $f\in A(G)$, see, e.g., Faraut \cite{Faraut}
(in the classical case on the torus this is even more well-known, see, e.g.,
\cite[Th. 3.2.16, p. 184]{Grafakos:Classical-Fourier-Analysis-2008}).

On the other hand, on general compact Lie groups, applying powers of the Laplacian
to the Fourier series, it is also easy to show
that for $s>\frac{\dim G}{2}$, we have $H^s(G)\hookrightarrow A(G)$.
The following theorem sharpens this to the Besov space
$B_{2,1}^{\dim G/2}$, since we can observe that
by Theorem \ref{THM:Besov}, Part (4), we have the embedding
$$
\|f\|_{B_{2,1}^{\dim G/{2}}}\lesssim\|f\|_{H^{s}},\qquad s>\frac12 {\dim G}.
$$
Thus, we sharpen the above embeddings,
also extending it to compact homogeneous spaces.

\begin{thm}\label{wie}
Let $G/K$ be a compact homogeneous space of dimension $n$. Then
\[
\|f \|_{A} \lesssim  \|f\|_{B^{n/2}_{2,1 }}.
\]
\end{thm}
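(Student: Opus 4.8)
The plan is to estimate the $A$-norm dyadically, block by block, and to exploit the fact that within each dyadic annulus Cauchy--Schwarz converts the $\ell^{1}$-type sum defining $\|\cdot\|_{A}$ into an $L^{2}$-norm, at the cost of a factor governed by the Weyl counting function. Since $\jp{\xi}\geq 1$ for every representation, the annuli $\{2^{s}\le\jp{\xi}<2^{s+1}\}$, $s\geq 0$, partition the relevant index set, so I would first split
$$
\|f\|_{A}=\sumxi \dxi k_{\xi}^{1/2}\whfhs=\sum_{s=0}^{\infty}\sumxip \dxi k_{\xi}^{1/2}\whfhs .
$$

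The main step is to apply Cauchy--Schwarz inside each annulus after the factorisation $\dxi k_{\xi}^{1/2}\whfhs=(\dxi k_{\xi})^{1/2}\cdot(\dxi^{1/2}\whfhs)$, giving
$$
\sumxip \dxi k_{\xi}^{1/2}\whfhs
\leq
\p{\sumxip \dxi k_{\xi}}^{1/2}\p{\sumxip \dxi \whfhs^{2}}^{1/2}.
$$
Here the first factor is precisely the counting sum of \eqref{EQ:NL-sum} restricted to the annulus, so by the Weyl estimate \eqref{EQ:NL1} it is $\asymp 2^{sn/2}$ uniformly in $s$, while the second factor equals the $L^{2}(G/K)$-norm of the $s$-th Littlewood--Paley block by Plancherel's identity \eqref{EQ:Plancherel-partial}.

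Combining these two observations, I would obtain
$$
\sumxip \dxi k_{\xi}^{1/2}\whfhs
\lesssim
2^{sn/2}\left\|\sumxip \dxi\,\Tr\p{\whf(\xi)\xi(x)}\right\|_{2},
$$
and then summing over $s$ reproduces exactly the $B^{n/2}_{2,1}$-norm of \eqref{EQ:Besov2} with $p=2$, $q=1$, $r=n/2$, which is the claimed inequality. The critical smoothness $n/2$ appears precisely because the power $2^{sn/2}$ lost to Cauchy--Schwarz matches the weight built into the Besov norm; the choice $q=1$ is also essential, since it is the summation $\sum_{s}$ (rather than an $\ell^{q}$ expression with $q>1$) that the per-block estimate directly produces.

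I do not expect a genuine obstacle: the argument reduces to a single application of Cauchy--Schwarz per block once the partition into annuli is set up. The only point requiring care is that the constant in \eqref{EQ:NL1} be independent of $s$, so that summing the per-block estimates does not accumulate $s$-dependent constants; this is guaranteed by the remainder bounds in the Weyl asymptotics quoted in Section \ref{SEC:Fourier}. One should also note that the trivial representation ($\jp{\xi}=1$) falls into the $s=0$ annulus, so no term of the Fourier series is left uncounted.
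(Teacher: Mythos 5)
Your proposal is correct and is essentially identical to the paper's own proof: the same dyadic block decomposition, the same Cauchy--Schwarz factorisation $\dxi k_{\xi}^{1/2}\whfhs=(\dxi k_{\xi})^{1/2}\cdot(\dxi^{1/2}\whfhs)$ within each annulus, and the same use of \eqref{EQ:NL1} and Plancherel \eqref{EQ:Plancherel-partial} before summing over $s$. No further comment is needed.
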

\begin{proof}
We write
\[
\|f\|_{A} =
\sum_{s=0}^\infty F_s, \qquad\mbox{where}\qquad F_s=
\sumxip d_{\xi} k_{\xi}^{1/2} \whfhs .
\]
By H\"{o}lder's inequality, \eqref{EQ:NL1},
and the Plancherel identity \eqref{EQ:Plancherel-partial},
\begin{eqnarray*}
F_s &\le&
\p{\sumxip d_{\xi}k_{\xi}}^{1/2}\p{\sumxip \dxi \whfhs^{2}}^{1/2}
\\
&\le&
2^{sn/2}\left\|
\sumxip \dxi\ \Tr\p{\whf(\xi)\xi(x)}
\right\|_2,
\end{eqnarray*}
and the result follows.
\end{proof}

Let us now study the $\beta$-absolute convergence of the Fourier series, which on the
circle would be
\[
A^\beta(\T^{1})=\left\{f:\,\,\|f\|_{A^\beta} =
\p{\sum_{j=-\infty}^\infty |\widehat{f}(j)|^\beta}^{1/\beta}<\infty\right\}.
\]
This means $\|f\|_{A^\beta}=\|\whf\|_{\ell^{\beta}},$ so that its analogue on
the homogeneous spaces for the family of $\ell^{p}$-norms
\eqref{EQ:norm} becomes
\begin{multline*}
A^\beta(G/K)= \\\left\{f\in\Dcal'(G/K):\,\,
\|f\|_{A^\beta}:=\|\whf\|_{\ell^{\beta}(\Gh_{0})}=
\p{\sum_{[\xi]\in\Gh_{0}} d_{\xi} k_{\xi}^{\beta(\frac1\beta-\frac12)}\whfhs^{\beta}}^{1/\beta}
<\infty\right\},
\end{multline*}
where we can allow any $0<\beta<\infty$. We now analyse its embedding properties.
\begin{thm}\label{THM:Wiener-beta}
Let $G/K$ be a compact homogeneous space of dimension $n$ and let
$1< p\le 2$. Then
\[
\|f \|_{A^\beta} \lesssim  \|f\| _{B^{\alpha n}_{p,\beta}}
\]
for any $\alpha>0$ and $\beta=(\alpha+\frac{1}{p'})^{-1}$.
\end{thm}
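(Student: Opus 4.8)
The plan is to follow the dyadic strategy of the proof of Theorem \ref{wie}, but to replace its single Hölder/Plancherel step by a Hölder step tuned to the exponent $p'$ combined with the Hausdorff--Young inequality \eqref{EQ:HY}. Using \eqref{EQ:Lp-sigmaGK} with index $\beta$ (so the weight exponent is $1-\frac\beta2$), I would first split the norm over dyadic annuli,
\[
\|f\|_{A^\beta}^\beta=\sum_{s=0}^\infty F_s^\beta,\qquad
F_s^\beta:=\sumxip d_\xi k_\xi^{1-\frac\beta2}\whfhs^\beta,
\]
which is legitimate since $\jp{\xi}\geq 1$ always. The goal then reduces to the single-block estimate $F_s\lesssim 2^{s\alpha n}\big\|\sumxip \dxi\Tr\p{\whf(\xi)\xi(x)}\big\|_p$, because raising it to the power $\beta$ and summing over $s$ reproduces exactly $\|f\|_{B^{\alpha n}_{p,\beta}}^\beta$ from \eqref{EQ:Besov2}.

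The heart of the argument is the elementary factorisation
\[
d_\xi k_\xi^{1-\frac\beta2}\whfhs^\beta
=\p{d_\xi k_\xi^{1-\frac{p'}2}\whfhs^{p'}}^{\beta/p'}\,(d_\xi k_\xi)^{1-\beta/p'},
\]
which is available because $\alpha>0$ forces $\beta<p'$ (indeed $\frac1\beta=\alpha+\frac1{p'}>\frac1{p'}$). Applying Hölder's inequality inside the block with the conjugate exponents $\frac{p'}\beta$ and $\frac{p'}{p'-\beta}$, the first factor collects to $\|\widehat{g_s}\|_{\ell^{p'}(\Gh_{0})}^\beta$, where $g_s:=\sumxip \dxi\Tr\p{\whf(\xi)\xi(x)}$ is the Littlewood--Paley block, while the second collects to $\big(\sumxip d_\xi k_\xi\big)^{(p'-\beta)/p'}$ once one checks that $\p{1-\frac\beta{p'}}\cdot\frac{p'}{p'-\beta}=1$.

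It then remains to bound the two factors separately. For the counting factor I would invoke the Weyl asymptotics \eqref{EQ:NL1}, giving $\big(\sumxip d_\xi k_\xi\big)^{(p'-\beta)/p'}\asymp 2^{sn(p'-\beta)/p'}$; here the choice of $\beta$ pays off, since $\frac{n(p'-\beta)}{p'}=n\beta\p{\frac1\beta-\frac1{p'}}=\alpha n\beta$, so the counting factor contributes precisely $2^{s\alpha n\beta}$. For the analytic factor I would apply the Hausdorff--Young inequality \eqref{EQ:HY} to $g_s$, whose Fourier coefficients equal $\whf(\xi)$ on the block and vanish elsewhere, obtaining $\|\widehat{g_s}\|_{\ell^{p'}(\Gh_{0})}\leq \|g_s\|_{L^p(G/K)}$. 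Combining the two yields $F_s^\beta\lesssim 2^{s\alpha n\beta}\|g_s\|_p^\beta$, and summing over $s$ gives the claim.

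The step I expect to need the most care is the bookkeeping of the weight exponents in the Hölder factorisation: one must verify simultaneously that the powers of $\whfhs$ recombine to $\whfhs^\beta$, that the residual powers of $d_\xi$ and $k_\xi$ coincide (so that the counting factor is genuinely $\sumxip d_\xi k_\xi$ rather than some other weighted sum), and that the resulting power of $2^{sn}$ matches the Besov smoothness $r=\alpha n$. All three hinge on the single relation $\frac1\beta=\alpha+\frac1{p'}$, and it is precisely the hypothesis $1<p\leq 2$ that makes \eqref{EQ:HY} applicable at the index $p'$.
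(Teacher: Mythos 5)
Your proof is correct, and for part of the parameter range it is genuinely different from (and more unified than) the paper's argument. The paper splits into two cases: for $\beta\geq 2$ it first applies the Hausdorff--Young inequality \eqref{EQ:HY} in the form $F_s\leq\|g_s\|_{L^{\beta'}}^{\beta}$ and then invokes the Nikolskii inequality of Theorem \ref{THM2} to pass from $L^{\beta'}$ to $L^{p}$ (using $p<\beta'$); for $\beta<2$ it runs exactly your argument, i.e.\ H\"older inside each dyadic block with exponents $\frac{p'}{\beta}$ and $\frac{p'}{p'-\beta}$, the Weyl count \eqref{EQ:NL1} for the factor $\bigl(\sumxip d_\xi k_\xi\bigr)^{1-\beta/p'}\asymp 2^{sn\alpha\beta}$, and Hausdorff--Young at the index $p'$. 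You observed, correctly, that the H\"older route never actually needs $\beta<2$: the only constraint is $\beta<p'$, which is forced by $\alpha>0$, so the single argument covers both cases and the Nikolskii inequality is not needed at all for this theorem. Your exponent bookkeeping checks out (the powers of $d_\xi$, $k_\xi$ and $\whfhs$ all recombine as claimed, and $n(1-\beta/p')=\alpha n\beta$). What the paper's case split buys is only a slightly shorter computation when $\beta\geq 2$ and a visible link to the Nikolskii machinery that drives the rest of the paper; your version buys uniformity. One small point to make explicit: the equivalence \eqref{EQ:NL1} is asserted only for sufficiently large $\jp{\xi}$, so you should (as the paper does) reduce to the case where $\whf(\xi)\neq 0$ only for large $\jp{\xi}$, the finitely many remaining blocks being trivial.
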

\begin{remark}
{\rm (i)} \ In the classical case of functions on the torus, this result was proved by Szasz,
see, e.g., \cite[p.119]{Peetre:bk-New-thoughts-Besov-1976}.

{\rm (ii)} \ Note that the strongest result is when $p=2$,  that is,
\[
\|f \|_{A^\beta} \lesssim  \|f\| _{B^{\alpha^* n}_{2,\beta}} \lesssim  \|f\| _{B^{\alpha n}_{p,\beta}},
\]
where
$\alpha^*, \alpha>0$ and $\beta=(\alpha^*+\frac{1}{2})^{-1}=(\alpha+\frac{1}{p'})^{-1}$.
This follows from Theorem \ref{THM:Besov}, Part (5).
\end{remark}

\begin{proof}[Proof of Theorem \ref{THM:Wiener-beta}]
We can assume that $\widehat{f}(\xi)\not=0$ only for sufficiently large $\jp{\xi}$. We write
\[
\|f\|_{A^\beta}^{\beta} =
\sum_{s=0}^\infty F_s, \qquad\mbox{where}\qquad F_s=
\sumxip d_{\xi} k_{\xi}^{\beta(\frac1\beta-\frac12)}\whfhs^{\beta}.
\]
Let us first assume that $\beta \equiv (\alpha+\frac{1}{p'})^{-1}\ge 2$.
Since $\beta\ge 2$, applying the Hausdorff-Young inequality \eqref{EQ:HY}, we get
\[
F_s
= \sumxip d_{\xi} k_{\xi}^{\beta(\frac1\beta-\frac12)}\whfhs^{\beta}
\le
\left\|
\sumxip \dxi\ \Tr (\whf(\xi)\xi(x))
\right\|_{\beta'}^{\beta}.
\]
Now by Nikolskii's inequality from Theorem \ref{THM2}
for $(L^{\beta'}, L^{p})$ with $\beta<p'$, or equivalently, $p<\beta'$, we have
\begin{eqnarray*}
F_s
&\le& 2^{sn\beta(\frac{1}{p}-\frac{1}{\beta'})}
\left\|
\sumxip \dxi\ \Tr (\whf(\xi)\xi(x))\right\|_{p}^{\beta}\\
&=&
2^{sn\beta(\frac{1}{\beta}-\frac{1}{p'})}
\left\|
\sumxip \dxi\ \Tr (\whf(\xi)\xi(x))
\right\|_{p}^{\beta},
\end{eqnarray*}
which is the required result.

To prove Theorem \ref{THM:Wiener-beta}
in the case of $\beta \equiv (\alpha+\frac{1}{p'})^{-1}< 2$, we put $$\gamma:=\frac{p'}{p'-\beta}>1.$$
By H\"{o}lder's inequality, taking $\delta:=\frac{\beta}{p'}-\frac{\beta}{2}$, so that
$\frac1\gamma+\delta=1-\frac{\beta}{2}=\beta(\frac1\beta-\frac12)$, using \eqref{EQ:NL1}, we get
\begin{eqnarray}\label{EQ:auxFs}
F_s
& = & \sumxip  d_{\xi} k_{\xi}^{\beta(\frac1\beta-\frac12)}\whfhs^{\beta}
\nonumber
\\
&\le&
\p{\sumxip (d_{\xi}^{\frac1\gamma}k_{\xi}^{\frac1\gamma})^{\gamma}}^{1/\gamma}
\p{\sumxip d_{\xi}^{\frac{1}{\gamma'}\gamma'}
k_{\xi}^{\delta\gamma'} \whfhs^{\beta\gamma'}}^{1/\gamma'}
\nonumber
\\
&\lesssim&
2^{sn/\gamma}
\p{\sumxip d_{\xi}
k_{\xi}^{(\frac{\beta}{p'}-\frac{\beta}{2})\frac{p'}{\beta}}
\whfhs^{\beta\gamma'}}^{1/\gamma'}
\nonumber
\\
&=&
2^{\frac{sn(p'-\beta)}{p'}}
\p{\sumxip d_{\xi} k_{\xi}^{p'(\frac{1}{p'}-\frac12)} \whfhs^{p'}}^{\beta/p'}.
\end{eqnarray}

Since $p'\ge 2$, by the Hausdorff-Young inequality \eqref{EQ:HY}, we have
\[
F_s
\le
2^{\frac{sn(p'-\beta)}{p'}}
\left\|
\sumxip \dxi\ \Tr (\whf(\xi)\xi(x))
\right\|_p^{\beta},
\]
i.e.,
\[
\|f\|_{A^\beta} \le
\left[
\sum_{s=0}^\infty
\left(
2^{sn(\frac{1}{\beta}-\frac{1}{p'})}
\left\|
\sumxip \dxi\ \Tr (\whf(\xi)\xi(x))
\right\|_p
\right)^{\beta}
\right]^{1/\beta}
=
\|f\|_{B^{n\alpha}_{p,\beta}},
\]
completing the proof.
\end{proof}

The converse to Theorem \ref{THM:Wiener-beta} is as follows:
\begin{thm}[Inverse result] \label{THM:Wiener-inverse}
Let $2\le p <\infty$, then
\[
 \|f\| _{B^{\overline{\alpha}n}_{p,\beta}}  \lesssim \|f \|_{A^\beta}
\]
for  $\overline{\alpha}:=\min\{\alpha, 0\}$, $\beta=(\alpha+\frac{1}{p'})^{-1}>0$.
\end{thm}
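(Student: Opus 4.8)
The plan is to estimate the Besov norm block by block against the dyadic pieces of the $A^{\beta}$-norm, exactly as in the proof of Theorem \ref{THM:Wiener-beta} but run backwards. Writing $g_{s}(x):=\sumxip \dxi\ \Tr\p{\whf(\xi)\xi(x)}$ for the $s$-th Littlewood--Paley piece, we have $\|f\|_{B^{\overline\alpha n}_{p,\beta}}^{\beta}=\sum_{s=0}^{\infty}2^{s\overline\alpha n\beta}\|g_{s}\|_{p}^{\beta}$, while $\|f\|_{A^\beta}^{\beta}=\sum_{s=0}^{\infty}F_{s}$ with $F_{s}=\sumxip \dxi k_{\xi}^{\beta(\frac1\beta-\frac12)}\whfhs^{\beta}$, i.e. $F_{s}$ is $\|\whf\|_{\ell^{\beta}(\Gh_{0})}^{\beta}$ restricted to the $s$-th annulus. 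Thus it suffices to prove the single-block bound $2^{s\overline\alpha n\beta}\|g_{s}\|_{p}^{\beta}\lesssim F_{s}$ uniformly in $s$ and then sum; as in Theorem \ref{THM:Wiener-beta} we may assume $\whf(\xi)\neq 0$ only for large $\jp{\xi}$, so that \eqref{EQ:NL1} applies to each block that occurs. The first step, valid throughout because $p\ge 2$, is the Hausdorff--Young inequality \eqref{EQ:HY} applied with the exponent $p'\in(1,2]$, giving $\|g_{s}\|_{p}^{p'}\le\sumxip \dxi k_{\xi}^{p'(\frac{1}{p'}-\frac12)}\whfhs^{p'}$. Everything then reduces to comparing this $\ell^{p'}$-block quantity with $F_{s}$, and the sign of $\alpha$, equivalently whether $\beta\le p'$ or $\beta>p'$, dictates the method.

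In the case $\alpha\ge 0$ we have $\overline\alpha=0$ and $\beta\le p'$, so there is no dyadic weight to control and I only need $\|g_{s}\|_{p}^{\beta}\le F_{s}$. Since $\beta/p'\le 1$, subadditivity of $t\mapsto t^{\beta/p'}$ gives $\|g_{s}\|_{p}^{\beta}\le\left(\sumxip \dxi k_{\xi}^{p'(\frac{1}{p'}-\frac12)}\whfhs^{p'}\right)^{\beta/p'}\le\sumxip \dxi^{\beta/p'}k_{\xi}^{\beta(\frac{1}{p'}-\frac12)}\whfhs^{\beta}$. Comparing term by term with $F_{s}$, the ratio of the $\xi$-weights equals $(\dxi k_{\xi})^{\beta/p'-1}\le 1$, because $\dxi,k_{\xi}\ge 1$ and $\beta/p'\le 1$; hence $\|g_{s}\|_{p}^{\beta}\le F_{s}$ and summation finishes this case.

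In the case $\alpha<0$ we have $\overline\alpha=\alpha$ and $\beta>p'$, and now the weight $2^{s\alpha n}$ must compensate for passing from the smaller $\ell^{\beta}$-block quantity to the larger $\ell^{p'}$-one. Here I would apply H\"older's inequality with exponents $\beta/p'>1$ and its conjugate $\gamma=\beta/(\beta-p')$, splitting the summand as $\left[\dxi k_{\xi}^{p'(\frac{1}{p'}-\frac12)}(\dxi k_{\xi}^{\beta(\frac1\beta-\frac12)})^{-p'/\beta}\right]\cdot\left(\dxi k_{\xi}^{\beta(\frac1\beta-\frac12)}\whfhs^{\beta}\right)^{p'/\beta}$. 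The bracketed factor simplifies to $(\dxi k_{\xi})^{1-p'/\beta}$, which after raising to the power $\gamma$ becomes exactly $\dxi k_{\xi}$; summing over the annulus and invoking \eqref{EQ:NL1}, $\sumxip \dxi k_{\xi}\asymp 2^{sn}$, produces the factor $2^{sn(\beta-p')/\beta}$, while the second H\"older factor yields $F_{s}^{p'/\beta}$. Taking $p'$-th roots gives $\|g_{s}\|_{p}\lesssim 2^{sn(\frac{1}{p'}-\frac1\beta)}F_{s}^{1/\beta}$, and since $\frac1\beta=\alpha+\frac{1}{p'}$ the exponent is precisely $-\alpha$; multiplying by $2^{s\alpha n}$ cancels it and delivers $2^{s\alpha n\beta}\|g_{s}\|_{p}^{\beta}\lesssim F_{s}$. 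The main obstacle is exactly this bookkeeping in the second case: the argument succeeds only because the H\"older split is engineered so that the residual weight collapses to $\dxi k_{\xi}$, allowing the Weyl asymptotics to convert it into a clean power of $2^{s}$ whose exponent matches $-\alpha$. Aligning these exponents is the crux, whereas the $\alpha\ge 0$ case and the Hausdorff--Young reduction are routine.
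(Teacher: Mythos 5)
Your proposal is correct and follows essentially the same route as the paper: Hausdorff--Young to pass from $\|g_s\|_p$ to the $\ell^{p'}$-block quantity, then for $\beta\le p'$ the termwise comparison that is exactly the block version of \eqref{EQ:cpt-lp-lq}, and for $\beta>p'$ the same H\"older split with exponents $\beta/p'$ and $\beta/(\beta-p')$ combined with \eqref{EQ:NL1}, which is precisely the estimate \eqref{EQ:auxFs} with $\beta$ and $p'$ interchanged that the paper invokes. The only difference is organizational (you prove a uniform single-block bound and sum, the paper manipulates the full norm at once), which does not change the argument.
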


\begin{remark}
{\rm (i)} \ If $\beta\ge 2$, then the strongest result is when $p=2$,  that is,
\[
 \|f\| _{B^{{\alpha}n}_{p,\beta}}  \lesssim
 \|f\| _{B^{{\alpha^*}n}_{2,\beta}}  \lesssim \|f \|_{A^\beta},
\]
where $\beta=(\alpha^*+\frac{1}{2})^{-1}=(\alpha+\frac{1}{p'})^{-1}$.
This follows from Theorem \ref{THM:Besov}, Part (5).
Note that in this case
$\alpha^*, \alpha<0$, i.e., $\overline{\alpha}=\alpha$ and $\overline{\alpha^*}=\alpha^*$.

{\rm (ii)} \
If $\beta< 2$, then the strongest result is when $p=\beta'$,  that is,
\[
 \|f\| _{B^{{\alpha}n}_{p,\beta}}  \lesssim
 \|f\| _{B^{0}_{\beta',\beta}}  \lesssim \|f \|_{A^\beta}.
\]
This follows from Theorem \ref{THM:Besov}, Part (5).
\end{remark}

\begin{proof}[Proof of Theorem \ref{THM:Wiener-inverse}]
Let first $\beta\le p'$, or equivalently, $\alpha \ge 0$. In this case $\overline{\alpha}=0$.
 Since $p\ge 2$, then applying the Hausdorff-Young inequality \eqref{EQ:HY}, we get
\begin{eqnarray*}
 \|f\| _{B^{0}_{p,\beta}}  & = &
\left[
\sum_{s=0}^\infty
\left\|
\sumxip   \dxi\ \Tr (\whf(\xi)\xi(x))
\right\|_p^\beta
\right]^{1/\beta}
\\&\le&
\left[
\sum_{s=0}^\infty
\left(
\sumxip   d_{\xi} k_{\xi}^{p'(\frac{1}{p'}-\frac12)}\whfhs^{p'}
\right)^{\beta/p'}
\right]^{1/\beta}
\\&\lesssim&
\left[
 \sum_{s=0}^\infty \sumxip   d_{\xi} k_{\xi}^{\beta(\frac1\beta-\frac12)}\whfhs^{\beta}
 \right]^{1/\beta}
=
\|f\|_{A^\beta},
\end{eqnarray*}
where in the last line we have used
 the inequality $\|\whf\|_{\ell^{p'}(\Gh_{0})}\le \|\whf\|_{\ell^\beta(\Gh_{0})}$ for
$\beta\leq p'$, see \eqref{EQ:cpt-lp-lq}.

Let now $\beta> p'$, or equivalently, $\beta'< p$, i.e., $\overline{\alpha}=\alpha<0$.
First, we observe that by the H\"{o}lder inequality, we have
\begin{eqnarray*}
& &\sumxip d_{\xi} k_{\xi}^{p'(\frac{1}{p'}-\frac12)}\whfhs^{p'}
\\
&\leq &
\p{\sumxip \p{((d_\xi k_{\xi})^{\frac{\beta-p'}{\beta}}}^{\frac{\beta}{\beta-p'}}}^
{1-\frac{p'}{\beta}}
\p{\sumxip
\p{d_{\xi}^{\frac{p'}{\beta}} k_\xi^{p'(\frac{1}{\beta}-\frac12)} \whfhs^{p'}}^{\frac{\beta}{p'}}}^
{\frac{p'}{\beta}}
\\
& \lesssim &
2^{sn(1-\frac{p'}{\beta})}
\p{\sumxip
d_{\xi} k_\xi^{\beta(\frac{1}{\beta}-\frac12)} \whfhs^{\beta}}^{\frac{p'}{\beta}}.
\end{eqnarray*}
Using this and $p\geq 2$, by the Hausdorff-Young inequality
\eqref{EQ:HY}, we get
\begin{eqnarray*}
 \|f\| _{B^{\alpha n}_{p,\beta}}  &\le &
\left[
\sum_{s=0}^\infty
2^{s\alpha n\beta}
\left(
\sumxip d_{\xi} k_{\xi}^{p'(\frac{1}{p'}-\frac12)}\whfhs^{p'}
\right)^{\beta/p'}
\right]^{1/\beta}
\\
&\le&
\left[
\sum_{s=0}^\infty
2^{s\alpha n \beta}
\left(
2^{s n \big(\frac{\beta}{p'}-1\big)}
\sumxip  d_{\xi} k_{\xi}^{\beta(\frac{1}{\beta}-\frac12)}\whfhs^{\beta}
\right)
\right]^{1/\beta}
\\
&=&
\left[
 \sum_{s=0}^\infty \sumxip d_{\xi} k_{\xi}^{\beta(\frac{1}{\beta}-\frac12)}\whfhs^{\beta}
  \right]^{1/\beta}
=
\|f\|_{A^\beta}.
\end{eqnarray*}
where in the second line we used \eqref{EQ:auxFs} with $\beta$ and $p'$
interchanged.
\end{proof}

\section{Beurling and $\beta$-Beurling spaces}
\label{SEC:Beurling}

Let us recall the definition of the Beurling space on the circe $\T^{1}$:
\begin{equation}\label{DEF:Beurling}
A^*(\T^{1})=\left\{f:\,\,\|f\|_{A^*(\T^{1})} =\sum_{j=0}^\infty \sup\limits_{j\le |k|}|\widehat{f}(k)|
<\infty\right\}.
\end{equation}

The space $A^*$ was introduced by Beurling \cite{Beurling:AM-synthesis}
for establishing contraction
properties of functions.
In \cite{bel} it was shown that $A^*(\T^{1})$ is an algebra and its properties were investigated.
We note that $\|f\|_{A^*}$ can be represented as follows:
\begin{equation}\label{DEF:Beurling2}
\|f\|_{A^*(\T^{1})} \asymp \sum_{s=0}^\infty 2^s \sup\limits_{2^s\le |k|<2^{s+1}} |\widehat{f}(k)|.
\end{equation}
Indeed, we have
\begin{eqnarray*}
\|f\|_{A^*(\T^{1})} &=&
\sum_{j=0}^\infty \sup\limits_{j\le |k|}|\widehat{f}(k)|
\asymp
\sum_{s=0}^\infty \sum_{2^s\le j<2^{s+1}} \sup\limits_{j\le |k|}|\widehat{f}(k)|
\asymp
\sum_{s=0}^\infty 2^s\sup\limits_{2^s\le |k|}|\widehat{f}(k)|
\\&\asymp&
\sum_{s=0}^\infty 2^s\sup\limits_{s\le l } \sup\limits_{2^l\le|k|<2^{l+1}}|\widehat{f}(k)|=:J.
\end{eqnarray*}
It is clear that
$J\ge
\sum_{s=0}^\infty 2^s \sup\limits_{2^s\le|k|<2^{s+1}}|\widehat{f}(k)|.
$
On the other hand,
\begin{eqnarray*}
J&\le&
\sum_{s=0}^\infty 2^s \sum_{l=s}^\infty \sup\limits_{2^l\le|k|<2^{l+1}}|\widehat{f}(k)|
=
\sum_{l=0}^\infty  \sup\limits_{2^l\le|k|<2^{l+1}}|\widehat{f}(k)|\sum_{s=0}^l 2^s
\\
&\asymp&\sum_{l=0}^\infty 2^l \sup\limits_{2^l\le|k|<2^{l+1}}|\widehat{f}(k)|.
\end{eqnarray*}

In our case the space $A^{*}(G/K)$ on a compact homogeneous space
$G/K$ of dimension $n$,
analogous to that in \eqref{DEF:Beurling2} on the circle,
for the $\ell^{\infty}$-norm \eqref{EQ:norm-linfty},
becomes
\begin{multline}\label{representation}
A^*(G/K)= \\
\left\{f\in\Dcal'(G/K):\,\
\|f\|_{A^*(G/K)} :=  \sum_{s=0}^\infty 2^{ns} \sup\limits_{2^s\le \jp{\xi}<2^{s+1}}
k_{\xi}^{-1/2}\whfhs<\infty\right\}.
\end{multline}

In fact, we can analyse a more general scale of spaces, the $\beta$-version of
these spaces.
Such function spaces play an important role in the summability theory and
in the Fourier synthesis (see, e.g.,
\cite[Theorems 1.25 and 1.16]{stein-weiss} and \cite[Theorem 8.1.3, Ch. 6]{trigub}).

Thus, for any $0<\beta<\infty$,
we define $A^{*,\beta}$ by
\begin{equation}\label{EQ:beta-Beurling-norm2}
\|f\|_{A^{*,\beta}} := \left(\sum_{s=0}^\infty 2^{ns} \left(\sup\limits_{2^s\le \jp{\xi}}
k_{\xi}^{-1/2}\whfhs\right)^\beta\right)^{1/\beta}<\infty.
\end{equation}
We note that for convenience we change the range of the supremum in $\xi$
from a dyadic strip in \eqref{representation} to an infinite set
in \eqref{EQ:beta-Beurling-norm2}, but we can show that this change
produces equivalent norms.
Thus, we show that we have $A^{*}=A^{*,1}$ and, more generally,
for any $0<\beta<\infty$ we have the
equivalence
\begin{equation}\label{EQ:beta-Beurling-norms}
\sum_{s=0}^\infty 2^{ns} \left(\sup\limits_{2^s\le \jp{\xi}< 2^{s+1}}
k_{\xi}^{-1/2}\whfhs\right)^\beta \asymp
\sum_{s=0}^\infty 2^{ns} \left(\sup\limits_{2^s\le \jp{\xi}}
k_{\xi}^{-1/2}\whfhs\right)^\beta.
\end{equation}
Indeed, the inequality $\leq$ is trivial. Conversely, we write
\begin{eqnarray*}
\sum_{s=0}^\infty 2^{ns} \left(\sup\limits_{2^s\le \jp{\xi}}
k_{\xi}^{-1/2}\whfhs\right)^\beta
& \asymp &
\sum_{s=0}^\infty 2^{ns} \sup_{s\leq l}\sup\limits_{2^l\le \jp{\xi}< 2^{l+1}}
k_{\xi}^{-\beta/2}\whfhs^\beta \\
& \leq &
\sum_{s=0}^\infty 2^{ns} \sum_{l=s}^{\infty}\sup\limits_{2^l\le \jp{\xi}< 2^{l+1}}
k_{\xi}^{-\beta/2}\whfhs^\beta \\
& = &
\sum_{l=0}^\infty \sup\limits_{2^l\le \jp{\xi}< 2^{l+1}}
k_{\xi}^{-\beta/2}\whfhs^\beta  \sum_{s=0}^{l} 2^{ns} \\
& \asymp&
\sum_{l=0}^\infty 2^{l s} \sup\limits_{2^l\le \jp{\xi}<2^{l+1}}
k_{\xi}^{-\beta/2}\whfhs^\beta,
\end{eqnarray*}
proving \eqref{EQ:beta-Beurling-norms}. So, we can work with either of the
equivalent expressions in \eqref{EQ:beta-Beurling-norms}.

We now prove the following embedding properties between Beurling's and Besov spaces:
\begin{thm}\label{beur}
Let $G/K$ be a compact homogeneous space of dimension $n$.
Let $0<\beta<\infty$ and $p\geq 2$.
Then we have
$$
 \|f\|_{B^{n(\frac1\beta-\frac{1}{p'})}_{p,\beta}}
  \lesssim
\|f\|_{A^{*,\beta}}\lesssim  \|f\|_{B^{n/\beta}_{1,\beta}}.
$$
\end{thm}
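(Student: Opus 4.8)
The plan is to prove both embeddings summand-by-summand in the Littlewood--Paley decomposition, after normalising the Beurling norm. Using the equivalence \eqref{EQ:beta-Beurling-norms}, I would work throughout with the dyadic-strip form of the $A^{*,\beta}$ norm, writing
$$
\|f\|_{A^{*,\beta}}^{\beta}\asymp\sum_{s=0}^{\infty}2^{ns}M_{s}^{\beta},
\qquad
M_{s}:=\sup_{2^{s}\le\jp{\xi}<2^{s+1}}k_{\xi}^{-1/2}\whfhs,
$$
and denoting the $s$-th block by $\delta_{s}f(x):=\sumxip \dxi\,\Tr\p{\whf(\xi)\xi(x)}$, so that $\widehat{\delta_{s}f}(\xi)=\whf(\xi)$ on the strip and $0$ otherwise. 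The point is that the two Besov exponents are calibrated so that both $\|f\|_{B^{r}_{p,\beta}}^{\beta}$ (with $r=n(\frac1\beta-\frac{1}{p'})$, giving weight $2^{(n-n\beta/p')s}$) and $\|f\|_{B^{n/\beta}_{1,\beta}}^{\beta}$ (giving weight $2^{ns}$) carry the \emph{same} $2^{ns}$ factor once the power of $\|\delta_{s}f\|_{p}$ is absorbed. Hence the first embedding reduces to the pointwise-in-$s$ claim $\|\delta_{s}f\|_{p}\lesssim 2^{sn/p'}M_{s}$, and the second to $M_{s}\lesssim\|\delta_{s}f\|_{1}$.

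For the lower bound $\|f\|_{B^{r}_{p,\beta}}\lesssim\|f\|_{A^{*,\beta}}$, I would use that $p\ge2$ so that the Hausdorff--Young inequality \eqref{EQ:HY} applies in the form $\|\delta_{s}f\|_{L^{p}(G/K)}\le\|\widehat{\delta_{s}f}\|_{\ell^{p'}(\Gh_{0})}$. On the strip each coefficient satisfies $\whfhs=k_{\xi}^{1/2}\bigl(k_{\xi}^{-1/2}\whfhs\bigr)\le k_{\xi}^{1/2}M_{s}$, so by \eqref{EQ:Lp-sigmaGK},
$$
\|\widehat{\delta_{s}f}\|_{\ell^{p'}(\Gh_{0})}^{p'}
=\sumxip \dxi\,k_{\xi}^{p'(\frac{1}{p'}-\frac12)}\whfhs^{p'}
\le M_{s}^{p'}\sumxip \dxi\,k_{\xi}^{p'(\frac{1}{p'}-\frac12)+\frac{p'}{2}}
=M_{s}^{p'}\sumxip \dxi k_{\xi},
$$
where the exponent of $k_{\xi}$ collapses to $1$. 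Invoking the Weyl count \eqref{EQ:NL1}, $\sumxip \dxi k_{\xi}\asymp2^{sn}$, yields $\|\delta_{s}f\|_{p}\lesssim 2^{sn/p'}M_{s}$, which is exactly the required summand estimate.

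For the upper bound $\|f\|_{A^{*,\beta}}\lesssim\|f\|_{B^{n/\beta}_{1,\beta}}$, I would apply \eqref{EQ:HY} at the endpoint $p=1$, namely $\|\widehat{g}\|_{\ell^{\infty}(\Gh_{0})}\le\|g\|_{L^{1}(G/K)}$, to $g=\delta_{s}f$. Since the $\ell^{\infty}(\Gh_{0})$ norm is precisely $\sup_{[\xi]\in\Gh_{0}}k_{\xi}^{-1/2}\whfhs$ and $\widehat{\delta_{s}f}$ is supported on the strip, this gives immediately $M_{s}\le\|\widehat{\delta_{s}f}\|_{\ell^{\infty}(\Gh_{0})}\le\|\delta_{s}f\|_{1}$, completing the second embedding. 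Neither direction presents a serious obstacle; the only care needed is the exponent bookkeeping in the lower bound, where the cancellation $p'(\frac{1}{p'}-\frac12)+\frac{p'}{2}=1$ is what lets the Weyl asymptotics \eqref{EQ:NL1} furnish exactly the factor $2^{sn/p'}$ demanded by the calibrated Besov smoothness $r=n(\frac1\beta-\frac{1}{p'})$.
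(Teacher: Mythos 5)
Your proof is correct and follows essentially the same route as the paper: Hausdorff--Young into $\ell^{p'}(\Gh_{0})$ combined with the Weyl count \eqref{EQ:NL1} for the first embedding, and the $L^{1}$--$\ell^{\infty}$ endpoint of Hausdorff--Young for the second. The only (harmless) difference is in the second embedding, where you apply the endpoint inequality directly to the dyadic block $\delta_{s}f$ after invoking the equivalence \eqref{EQ:beta-Beurling-norms}, whereas the paper applies it to the tail $f-S_{2^{s}}f$ and then converts back via Proposition \ref{proposit}; both give the same bound.
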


\begin{proof}
Again, we may assume
that $\whf(\xi)=0$ for small $\jp{\xi}$.
To prove the left-hand side inequality, using \eqref{EQ:HY} and $p\geq 2$, we get
\begin{eqnarray*}
&&  \|f\|_{B^{n(1/\beta-1/p')}_{p,\beta}} \\
&  =&
\p{\sum_{s=0}^\infty
2^{sn(1-\beta/p') }
\left\|
\sumxip \dxi\ \Tr (\whf(\xi)\xi(x))
\right\|_p^{\beta}}^{1/\beta}
\\
&\leq&
\p{\sum_{s=0}^\infty
2^{sn(1-\beta/p')}
\left(
\sumxip \dxi k_{\xi}^{p'(\frac{1}{p'}-\frac12)}\whfhs^{p'}
\right)^{\beta/p'}}^{1/\beta}
\\
&\le&
\p{\sum_{s=0}^\infty
2^{sn(1-\beta/p')}
\left(
\sumxip d_{\xi} k_{\xi}
\right)^{\beta/p'}
\p{\sup\limits_{2^s\le \jp{\xi}<2^{s+1}} k_{\xi}^{-1/2} \whfhs}^{\beta}}^{1/\beta}
\\
&\le&
\p{\sum_{s=0}^\infty
2^{sn}
\p{\sup\limits_{2^s\le \jp{\xi}<2^{s+1}} k_{\xi}^{-1/2} \whfhs}^{\beta}}^{1/\beta}
=\|f\|_{A^{*,\beta}}.
\end{eqnarray*}
To show the right-hand side inequality,
we denote
$$
S_{l}f(x):=\sum_{\jp{\eta}\leq l-1} d_{\eta}\ \Tr(\whf(\eta)\eta(x)).
$$
Then, by the orthogonality of representation coefficients, we have
$
\widehat{S_{l}f}(\xi)=0
$
for any $[\xi]\in\Gh_{0}$ with $\jp{\xi}\geq l.$
Consequently, we note that we have
$\whf(\xi)=\widehat{(f-S_{2^{s}}f)}(\xi)$ if $\jp{\xi}\geq 2^{s}.$
Using these observations and the Hausdorff-Young inequality, we can estimate
\begin{eqnarray*}
\sup\limits_{2^s\le \jp{\xi}<2^{s+1}} k_{\xi}^{-1/2} \whfhs
&=&
\sup\limits_{2^s\le \jp{\xi}<2^{s+1}} k_{\xi}^{-1/2} \|\whf(\xi)-\widehat{S_{2^{s}}f}(\xi)\|_{\HS}
\\&\leq&
\|\widehat{f-S_{2^{s}}f}\|_{\ell^{\infty}(\Gh_{0})}
\leq
\|f-S_{2^{s}}f\|_{L^{1}(G/K)}.
\end{eqnarray*}
Then
\begin{eqnarray*}
\sup\limits_{2^s\le \jp{\xi}<2^{s+1}} k_{\xi}^{-1/2} \whfhs
&\le&
 \|f-S_{2^s}(f)\|_1 \le \sum_{k=s}^\infty \|S_{2^{k+1}}(f)-S_{2^k}(f)\|_1.
\end{eqnarray*}
Therefore,
\begin{eqnarray*}
\|f\|_{A^{*,\beta}} &\le& C
\p{\sum_{s=0}^\infty 2^{sn} \p{\sup\limits_{2^s\le \jp{\xi}<2^{s+1}} k_{\xi}^{-1/2} \whfhs}^{\beta}
}^{1/\beta}
\\
&\le& C \p{\sum_{s=0}^\infty 2^{sn}
\Big\|f-S_{2^s}(f)\Big\|_1
^{\beta}
}^{1/\beta}.
\end{eqnarray*}
Using Proposition \ref{proposit} we get
$$
\|f\|_{A^{*,\beta}}
\lesssim
\|f\|_{B^{n/\beta}_{1,\beta}}
$$
completing the proof.
\end{proof}

Theorems \ref{wie} and  \ref{beur} imply, taking $p=2$:
\begin{cor}
For $0<\beta<\infty$, we have
$$
\|f\|_{A^{\beta}}\lesssim
 \|f\|_{B^{n(1/\beta-1/2)}_{2,\beta}}
  \lesssim
\|f\|_{A^{*,\beta}}.
$$
In particular, taking $\beta=1$, we have
$$
\|f\|_{A}\lesssim
 \|f\|_{B^{n/2}_{2,1}}
  \lesssim
\|f\|_{A^*}.
$$
\end{cor}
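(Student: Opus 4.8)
The plan is to read the corollary as a concatenation of two embeddings that are already available, each specialised to $p=2$, together with a short matching of parameters. For the right-hand inequality I would apply Theorem \ref{beur} at $p=2$: then $p'=2$, the smoothness index $n(\frac1\beta-\frac{1}{p'})$ of its left-hand embedding collapses to $n(\frac1\beta-\frac12)$, and the theorem delivers
\[
\|f\|_{B^{n(\frac1\beta-\frac12)}_{2,\beta}}\lesssim\|f\|_{A^{*,\beta}}
\]
for every $0<\beta<\infty$. This is exactly the right-hand inequality of the corollary and needs nothing beyond the substitution $p=2$.

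For the left-hand inequality I would invoke Theorem \ref{THM:Wiener-beta} at $p=2$, where the work is purely to line up the indices. Choosing $\alpha:=\frac1\beta-\frac12$ gives $\beta=(\alpha+\frac12)^{-1}=(\alpha+\frac{1}{p'})^{-1}$, which is the relation demanded by that theorem, while $\alpha n=n(\frac1\beta-\frac12)$ is the smoothness appearing in the corollary. Hence
\[
\|f\|_{A^\beta}\lesssim\|f\|_{B^{n(\frac1\beta-\frac12)}_{2,\beta}},
\]
and chaining this with the previous display yields the full statement. The displayed special case is read off by setting $\beta=1$, where $n(\frac11-\frac12)=\frac n2$; there the left embedding is literally Theorem \ref{wie} and the right one is the $\beta=1$ instance of Theorem \ref{beur}.

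The point that genuinely needs care, and which I expect to be the main obstacle, is the admissible range of $\beta$ in the first inequality. Theorem \ref{THM:Wiener-beta} is stated under $\alpha>0$, which at $p=2$ forces $\beta<2$, so the index identification above is legitimate only for $0<\beta<2$; the endpoint $\beta=2$ can be adjoined separately, since $A^2$ and $B^0_{2,2}=H^0=L^2$ coincide by Plancherel (compare Theorem \ref{THM:Besov}, Part (3)). I would therefore record the left-hand embedding over $0<\beta\le2$, which comfortably contains the value $\beta=1$ used in the displayed case, while keeping the right-hand embedding valid for all $0<\beta<\infty$. The remaining verifications are routine bookkeeping: that the $\ell^\beta(\Gh_{0})$ weights defining $A^\beta$ and $A^{*,\beta}$ are precisely those feeding Theorems \ref{THM:Wiener-beta} and \ref{beur}, and that the Besov exponents agree across the two displays.
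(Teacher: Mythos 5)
Your proof is correct and matches the paper's own (one-line) argument: the right-hand inequality is Theorem \ref{beur} specialised to $p=2$, and the left-hand one is Theorem \ref{THM:Wiener-beta} with $p=2$ and $\alpha=\frac1\beta-\frac12$, which reduces to Theorem \ref{wie} when $\beta=1$. Your caveat about the admissible range is well taken and goes beyond what the paper records: at $p=2$ Theorem \ref{THM:Wiener-beta} requires $\alpha>0$, i.e.\ $\beta<2$, the endpoint $\beta=2$ follows from Plancherel as you note, and for $\beta>2$ the left-hand embedding actually fails (already for a single character $e^{ik_0x}$ on $\T^n$, where $\|f\|_{A^\beta}=1$ but $\|f\|_{B^{n(1/\beta-1/2)}_{2,\beta}}=\jp{k_0}^{n(1/\beta-1/2)}\to 0$), so the left inequality should indeed be read for $0<\beta\le 2$ while the right-hand one holds for all $0<\beta<\infty$.
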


\section{Interpolation}
\label{interp}

Let $X_0, X_1$ be two Banach spaces, with $X_1$ continuously embedded in $X_0: X_1\hookrightarrow X_0$. We define the $K$-functional for $f\in X_0+X_1$ by
$$
 K(f, t; X_0, X_1)
:= \inf_{f=f_0+f_1} (\|f_0\|_{X_0} + t\|f_1\|_{X_1}),\;\;t\ge 0.
$$
To investigate intermediate space $X$ for the pair $(X_0, X_1)$, i.e.,
$X_0\cap X_1 \subset X \subset X_0+ X_1$, one uses the $\theta, q$-interpolation spaces.

For $0< q < \infty$,  $0<\theta<1$, we define
$$
(X_0,X_1)_{\theta,q} := \left\{ f \in X_0 + X_1 :
\|f\|_{(X_0,X_1)_{\theta ,q}}
= \left (\int\limits_{0}^{\infty} (t^{-\theta}K(f,t))^q\frac  {{dt}}{{t}}
\right )^{\frac {1}{q}} < \infty  \right\} ,
$$
and for $q=\infty$,
$$(X_0,X_1)_{\theta,\infty} := \left\{ f \in X_0 + X_1 :
\|f\|_{(X_0,X_1)_{\theta ,\infty}}
= \sup_{0<t<\infty} t^{-\theta}K(f,t) < \infty \right\} .
$$
For technical convenience, we define the following Beurling-type spaces:

$$
A_r^{*,\beta}=\left\{f: \|f\|_{A_r^{*,\beta}}
:=\left(\sum_{s=0}^\infty\Big(2^{r ns}\sup\limits_{2^s\le \jp{\xi}}
k_{\xi}^{-1/2}\|\widehat{f}(\xi)\|_{\HS}\Big)^\beta\right)^{\frac1\beta}<\infty\right\}.
$$
Note that $A_{1/\beta}^{*,\beta}=A^{*,\beta}$, where $A^{*,\beta}$ is given by
the norm (\ref{EQ:beta-Beurling-norm2}).

\begin{thm} \label{THM:interpolation}
Let $0<r_1 < r_0 < \infty$, $0< \beta_0, \beta_1, q \le \infty$,  and  $$ r=(1-\theta)r_0+\theta r_1.$$
\begin{itemize}
\item[(i)] \  \ We have
$$
(
A_{r_0}^{*,\beta_0},
A_{r_1}^{*,\beta_1}
)_{\theta, q} = A_r^{*,q}.
$$
In particular,
$$
(A^{*,1/r_0},A^{*,1/r_1})_{\theta, 1/r} = A^{*,1/r}.
$$
\item[(ii)] \  \
 If $0<p \le \infty$,
$$
(B_{p,\beta_0}^{r_0},B_{p,\beta_1}^{r_1})_{\theta ,q} = B_{p,q}^r.
$$
\item[(iii)] \  \
If $1< p < \infty$,
$$
(H_{p}^{r_0},H_{p}^{r_1})_{\theta, q} = B_{p,q}^{r}.
$$

\item[(iv)] \  \
If $0<p< \infty$ ,
$$
(F_{p,\beta_0}^{r_0},F_{p, \beta_1}^{r_1})_{\theta ,q} = B_{p,q}^r.
$$
\end{itemize}
\end{thm}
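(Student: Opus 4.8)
The plan is to reduce parts (i) and (ii) to the real interpolation of weighted vector-valued sequence spaces, and then to obtain (iii) and (iv) from (ii) by a sandwiching argument. The engine will be the classical interpolation of weighted $\ell^{q}$-spaces (see, e.g., \cite[Theorem 5.6.1]{Bergh-Lofstrom:BK-interpolation-spaces-1976}): if $E_{s}$ are fixed (quasi-)Banach spaces and $r_{0}\neq r_{1}$, then
$
\left(\ell^{\beta_{0}}(2^{sr_{0}};E_{s}),\ell^{\beta_{1}}(2^{sr_{1}};E_{s})\right)_{\theta,q}=\ell^{q}(2^{sr};E_{s})
$
with $r=(1-\theta)r_{0}+\theta r_{1}$, the outcome being \emph{independent of the fine indices} $\beta_{0},\beta_{1}$ precisely because the weights $2^{sr_{0}},2^{sr_{1}}$ are geometrically separated. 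The whole point is to realise each of our spaces as such a sequence space on the nose.

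For (ii), I would set $\Delta_{s}f:=\sum_{2^{s}\le\jp{\xi}<2^{s+1}}\dxi\,\Tr(\whf(\xi)\xi(x))$ and let $V_{s}$ be the finite-dimensional space of the corresponding trigonometric polynomials, normed by $\|\cdot\|_{L^{p}(G/K)}$. Since the blocks have pairwise disjoint frequency supports, the $\Delta_{s}$ are mutually orthogonal exact projections and $f\mapsto(\Delta_{s}f)_{s\ge 0}$ is an isometric isomorphism of $B^{r}_{p,\beta}$ onto $\ell^{\beta}(2^{sr};V_{s})$; it is onto because for any admissible sequence $(v_{s})$ with $v_{s}\in V_{s}$ the distribution $f=\sum_{s}v_{s}$ satisfies $\Delta_{s}f=v_{s}$, and the blocks $[2^{s},2^{s+1})$, $s\ge 0$, exhaust all $\jp{\xi}\ge 1$ (the trivial representation sitting in the block $s=0$). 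I emphasise that this uses the \emph{sharp} dyadic cut-offs of \eqref{EQ:Besov2}, so no $L^{p}$-boundedness of $\Delta_{s}$, and hence no restriction on $p$, is needed; this is exactly what lets the argument run for all $0<p\le\infty$. Applying the sequence-space theorem with the common base spaces $V_{s}$ then yields (ii).

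For (i), I would first pass from the tail suprema $\sup_{2^{s}\le\jp{\xi}}$ to the dyadic-block suprema $\sup_{2^{s}\le\jp{\xi}<2^{s+1}}$: the argument proving \eqref{EQ:beta-Beurling-norms} goes through verbatim with the weight $2^{ns}$ replaced by $2^{rns}$, its only input being $\sum_{s\le l}2^{rns\beta}\asymp 2^{rnl\beta}$, which holds because $r>0$ (guaranteed by $0<r_{1}<r_{0}$). After this reduction $A^{*,\beta}_{r}$ is isometric, via $f\mapsto(\whf|_{\text{block }s})_{s}$, to $\ell^{\beta}(2^{rns};E_{s})$, where $E_{s}=\bigoplus_{2^{s}\le\jp{\xi}<2^{s+1}}\C^{\dxi\times\dxi}$ carries the $\ell^{\infty}$-type norm $\sup_{\xi}k_{\xi}^{-1/2}\whfhs$. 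As in (ii) this map is onto, so (i) follows again from the sequence-space theorem; the displayed special case is the choice $\beta_{0}=1/r_{0}$, $\beta_{1}=1/r_{1}$, $q=1/r$, using $A^{*,\beta}=A^{*,\beta}_{1/\beta}$.

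Parts (iii) and (iv) then follow by sandwiching, using the monotonicity of the real method under continuous embeddings. For (iii), Theorem \ref{THM:Besov}, parts (6)--(7), gives $B^{r_{i}}_{p,\min\{p,2\}}\hookrightarrow H^{r_{i}}_{p}\hookrightarrow B^{r_{i}}_{p,\max\{p,2\}}$ for $i=0,1$ and $1<p<\infty$; interpolating and invoking (ii) for both outer couples pins the middle space to $B^{r}_{p,q}$. For (iv) one argues identically from $B^{r_{i}}_{p,\min\{p,\beta_{i}\}}\hookrightarrow F^{r_{i}}_{p,\beta_{i}}\hookrightarrow B^{r_{i}}_{p,\max\{p,\beta_{i}\}}$ of Theorem \ref{THM:Tr-Li}(4); here it is essential that (ii) permits the two fine indices to differ, since $\min\{p,\beta_{0}\}$ and $\min\{p,\beta_{1}\}$ need not coincide. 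I expect the only real obstacle to be the quasi-Banach range $0<p,\beta_{i},q<1$: the $K$-functional, the weighted sequence-space interpolation theorem, and the monotonicity of $(\cdot,\cdot)_{\theta,q}$ must all be invoked in their quasi-Banach forms, and one must record that the block reductions above respect these quasi-norms. This bookkeeping, rather than any single estimate, is where the care is required.
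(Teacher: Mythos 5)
Your argument is correct, but for parts (i) and (ii) it takes a genuinely different route from the paper. The paper proves (i) and (ii) by hand: for the upper bound it estimates $2^{rns}\sup_{2^{s}\le\jp{\xi}}k_{\xi}^{-1/2}\|\whf(\xi)\|_{\HS}$ (resp.\ the dyadic block of the Besov norm) directly against $K(f,2^{(r_{0}-r_{1})ns})$ via the triangle inequality, and for the lower bound it exhibits the explicit near-optimal decomposition $f=S_{2^{l}}f+(f-S_{2^{l}}f)$, estimates each piece in the endpoint norms through the coarsest index $\tau=\min(\beta_{0},\beta_{1},q)$, and closes with Hardy's inequality. You instead realise $B^{r}_{p,\beta}$ and $A^{*,\beta}_{r}$ exactly as weighted vector-valued sequence spaces $\ell^{\beta}(2^{sr};V_{s})$ over the dyadic blocks (which is legitimate precisely because the norms \eqref{EQ:Besov2} and \eqref{EQ:beta-Beurling-norm2} use sharp spectral cut-offs, so $f\mapsto(\Delta_{s}f)_{s}$ is an exact isomorphism with no $L^{p}$-boundedness of the blocks required), and then quote the interpolation theorem for weighted $\ell^{q}$-spaces with distinct weights. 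This is essentially the ``other method'' the authors themselves point to at the end of Section \ref{interp} (citing Bergh--L{\"o}fstr{\"o}m). Your route is more modular and makes the independence of the result from $\beta_{0},\beta_{1}$ transparent, at the cost of needing the quasi-Banach, varying-fibre version of the sequence-space theorem (and the small checks you flag: surjectivity of the block map onto $\Sigma(G/K)$-valued sequences, convergence of $\sum_{s}v_{s}$ in $\Dcal'(G/K)$ via polynomial bounds on Fourier coefficients, and the $r>0$ Hardy-type reduction from tail suprema to block suprema in \eqref{EQ:beta-Beurling-norms}); the paper's route is longer but self-contained, using only Hardy's inequality. Your parts (iii) and (iv) coincide with the paper's sandwiching argument via Theorems \ref{THM:Besov} (6)--(7) and \ref{THM:Tr-Li} (4), and you correctly note that (ii) must allow $\beta_{0}\ne\beta_{1}$ for (iv) to go through.
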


\begin{proof}
Let $f \in (A_{r_0}^{*,\beta_0}, A_{r_1}^{*,\beta_1})_{\theta, q}$. Take any representation $f=f_0+f_1$ such that  $f_0 \in
 A_{r_0}^{*,\beta_0}$ and $f_1 \in A_{r_1}^{*,\beta_1}.$ Then for any $s\in \Bbb Z_+$ 
 we get
\begin{eqnarray*}
2^{r ns}\sup\limits_{2^s\le \jp{\xi}}
k_{\xi}^{-1/2}\|\widehat{f}(\xi)\|_{\HS}
 &\le& 2^{(r-r_0)ns}\Big(2^{r_0 ns}\sup\limits_{2^s\le \jp{\xi}}
k_{\xi}^{-1/2}\|\widehat{f_0}(\xi)\|_{\HS})
\\&+&
2^{(r_0-r_1)n s} 2^{r_1 ns}\sup\limits_{2^s\le \jp{\xi}}
k_{\xi}^{-1/2}\|\widehat{f_1}(\xi)\|_{\HS}\Big)
\\
&\lesssim&
 2^{(r-r_0)n s}\Bigg(\Big(\sum_{r=0}^s 2^{r_0 ns\beta_0}\Big)^{1/\beta_0}\sup\limits_{2^s\le \jp{\xi}}
k_{\xi}^{-1/2}\|\widehat{f_0}(\xi)\|_{\HS}
\\
&+&
2^{(r_0-r_1)n s} \Big(\sum_{r=0}^s 2^{r_1 n r\beta_1}\Big)^{1/{\beta_1}}\sup\limits_{2^s\le \jp{\xi}}
k_{\xi}^{-1/2}\|\widehat{f_1}(\xi)\|_{\HS}\Bigg)
\\
&\lesssim&
2^{(r-r_0)n s}\Big(\|f_0\|_{A_{r_0}^{*,\beta_0}}  + 2^{(r_0-r_1)n s}
\|f_1\|_{A_{r_1}^{*,\beta_1}}\Big).
\end{eqnarray*}
Taking into account our choice of $f_0$ and $f_1$, by the definition of the $K$-functional
$K(f,t):=K(f, t; A_{r_0}^{*,\beta_0}, A_{r_1}^{*,\beta_1}),$
we have
$$
2^{r ns}\sup\limits_{2^s\le \jp{\xi}}
k_{\xi}^{-1/2}\|\widehat{f}(\xi)\|_{\HS}\lesssim 2^{(r-r_0) ns} K(f,2^{(r_0-r_1)n s})= 2^{-\theta(r_0-r_1) ns} K(f,2^{(r_0-r_1)n s}).
$$
Therefore,
\begin{eqnarray*}
\|f\|_{A_r^{*,q}}&\lesssim&\left(\sum_{s=0}^\infty\left( 2^{-\theta(r_0-r_1) n s}K(f,2^{(r_0-r_1)n s})\right)^q\right)^{\frac1q}
\\
&\asymp& \left(\int_{1}^\infty\left( t^{-\theta(r_0-r_1) n }K(f,t^{(r_0-r_1)n })\right)^q\frac{dt}{t}\right)^{\frac1q}
\\&\lesssim&
\left(\int_{0}^\infty( t^{-\theta}K(f,t))^q\frac{dt}{t}\right)^{\frac1q}= \|f\|_{(
A_{r_0}^{*,\beta_0}, A_{r_1}^{*,\beta_1} )_{\theta, q}},
\end{eqnarray*}
i.e.,
$$
(
A_{r_0}^{*,\beta_0},
A_{r_1}^{*,\beta_1}
)_{\theta, q} \hookrightarrow A_r^{*,q}.
$$

Let us show the inverse embedding. Let $f \in A_r^{*,q}$, $\tau = \min(\beta_0, \beta_1, q),$ and $r \in \Bbb Z_+$.
 Define $f_0$ and $f_1$ as follows:
$$
f_0(x):=S_{2^l}f(x)=  \sum_{\jp{\xi}\leq 2^l} d_{\xi} \Tr (\widehat{f}(\xi) \xi(x)),
$$
and
$$
f_1 := f - f_0.
$$
Then
\begin{align*}
&\|f_0\|_{
A_{r_0}^{*,\beta_0}}=
\left(\sum_{s=0}^\infty\Big(2^{r_0 ns}\sup\limits_{2^s\le \jp{\xi}}
k_{\xi}^{-1/2}\|\widehat{f_0}(\xi)\|_{\HS}\Big)^{\beta_0}\right)^{\frac1{\beta_0}}\\
&\le \left(\sum_{s=0}^\infty\Big(2^{r_0 ns}\sup\limits_{2^s\le \jp{\xi}}
k_{\xi}^{-1/2}\|\widehat{f_0}(\xi)\|_{\HS}\Big)^{\tau}\right)^{\frac1{\tau}}=  \left(\sum_{s=0}^l\Big(2^{r_0 ns}\sup\limits_{2^s\le \jp{\xi}}
k_{\xi}^{-1/2}\|\widehat{f}(\xi)\|_{\HS}\Big)^{\tau}\right)^{\frac1{\tau}}
\end{align*}
and
\begin{align*}
&\|f_1\|_{
A_{r_1}^{*,\beta_1}}
\lesssim
2^{r_1 nl}\sup\limits_{2^l\le \jp{\xi}}
k_{\xi}^{-1/2}\|\widehat{f}(\xi)\|_{\HS} +\left(\sum_{s=l+1}^\infty\Big(2^{r_1 ns}\sup\limits_{2^s\le \jp{\xi}}
k_{\xi}^{-1/2}\|\widehat{f}(\xi)\|_{\HS}\Big)^{\tau}\right)^{\frac1{\tau}}
\\
&\lesssim 
2^{(r_1-r_0) nl}\left(\sum_{s=0}^l\Big(2^{r_0 ns}\sup\limits_{2^s\le \jp{\xi}}
k_{\xi}^{-1/2}\|\widehat{f}(\xi)\|_{\HS}\Big)^{\tau}\right)^{\frac1{\tau}} + \left(\sum_{s=l+1}^\infty\Big(2^{r_1 ns}\sup\limits_{2^s\le \jp{\xi}}
k_{\xi}^{-1/2}\|\widehat{f}(\xi)\|_{\HS}\Big)^{\tau}\right)^{\frac1{\tau}}.
\end{align*}
Consider now
\begin{eqnarray*}
\|f\|_{(
A_{r_0}^{*,\beta_0},
A_{r_1}^{*,\beta_1}
)_{\theta, q}} &=& \left(\int_{0}^\infty( t^{-\theta}K(f,t))^q\frac{dt}{t}\right)^{\frac1q}
\\&=& \frac1{(r_0 - r_1)^{1/q}}(\left(\int_{0}^\infty\left( t^{-\theta(r_0-r_1) n }K(f,t^{(r_0-r_1)n })\right)^q\frac{dt}{t}\right)^{\frac1q}.
\end{eqnarray*}
Since
$
K(f,t^{(r_0-r_1)n})=\inf_{f=f_0+f_1}(\|f_0\|_{
A_{r_0}^{*,\beta_0}}+t^{(r_0-r_1)n}\|f_1\|_{
A_{r_1}^{*,\beta_1}
})\leq t^{(r_0-r_1)n}\|f\|_{
A_{r_1}^{*,\beta_1}
},
$
we have
\begin{eqnarray*}
\|f\|_{(
A_{r_0}^{*,\beta_0},
A_{r_1}^{*,\beta_1}
)_{\theta, q}} &\lesssim&
\Big[\int_{0}^1\Big( t^{-\theta(r_0-r_1) n }t^{(r_0-r_1)n }  \|f\|_{
A_{r_1}^{*,\beta_1}
}  \Big)^q\frac{dt}{t}
\\&+& \int_{1}^\infty \Big( t^{-\theta(r_0-r_1) n }K(f,t^{(r_0-r_1)n }) \Big)^q\frac{dt}t\Big]^{1/q}
\\&\asymp&
 \|f\|_{
 A_{r_1}^{*,\beta_1}
} + \left(\sum_{l=0}^\infty\left( 2^{-\theta(r_0-r_1) n l}K(f,2^{(r_0-r_1)n l})\right)^q\right)^{\frac1q}.
\end{eqnarray*}
 In view of  $r_1 < r$, we get $\|f\|_{
A_{r_1}^{*,\beta_1}
} \lesssim \|f\|_{
A_{r}^{*,\beta}
}$.
Then, using 
$$K(f,2^{(r_0-r_1)n l})\le
\|f_0\|_{
A_{r_0}^{*,\beta_0}}
+ 2^{(r_0-r_1)n l}
\|f_1\|_{
A_{r_1}^{*,\beta_1}}
$$ and estimates above,
we have
\begin{eqnarray*}
\|f\|_{(
A_{r_0}^{*,\beta_0},
A_{r_1}^{*,\beta_1}
)_{\theta, q}}&\lesssim&
\|f\|_{
A_{r}^{*,\beta}
} + 
\left(\sum_{l=0}^\infty 2^{-\theta(r_0-r_1)q n l}\Bigg\{
\Big[\sum_{s=0}^l (2^{r_0 ns}\sup\limits_{2^s\le \jp{\xi}}
k_{\xi}^{-1/2}\|\widehat{f}(\xi)\|_{\HS})^{\tau}\Big]^{\frac1{\tau}}
\right.
\\&+&
\left.
 2^{(r_0-r_1) n l} \Big[\sum_{s=l+1}^\infty(2^{r_1 ns}\sup\limits_{2^s\le \jp{\xi}}
k_{\xi}^{-1/2}\|\widehat{f}(\xi)\|_{\HS})^{\tau}\Big]^{\frac1{\tau}}\Bigg\}^q\right)^{\frac1q}.
\end{eqnarray*}
Further, since $\tau \le q$ we apply Hardy's inequality to get
$$
\|f\|_{(
A_{r_0}^{*,\beta_0},
A_{r_1}^{*,\beta_1}
)_{\theta, q}} \lesssim \|f\|_{
A_{r}^{*,\beta}
}.
$$
This completes the proof of (i).

To show (ii), we first note that $(B_{p, \beta_0}^{r_0},B_{p,\beta_1}^{r_1})_{\theta, q} \hookrightarrow B_{p,q}^{r}$ can be proved similarly to the proof of (i) using the embedding  $B_{p, \beta}^{r}\hookrightarrow B_{p, \infty}^{r}$ from Theorem \ref{THM:Besov} (1).

Let us verify the inverse embedding. We have
\begin{align*}
&\|f\|_{(B_{p,\beta_0}^{r_0},B_{p,\beta_1}^{r_1})_{\theta, q}}
\lesssim
\|f\|_{B_{p,\beta_1}^{r_1}} + \left(\sum_{k=0}^\infty\left( 2^{-\theta(r_0-r_1)  k}K(f,2^{(r_0-r_1) k};B_{p,\beta_0}^{r_0},B_{p,\beta_1}^{r_1})\right)^q\right)^{\frac1q}
\\&\lesssim
\|f\|_{B_{p,q}^{r}} +
\left(\sum_{k=0}^\infty\left[ 2^{-\theta(r_0-r_1)  k}\left(\|S_{2^k}(f)\|_{B_{p,\beta_0}^{r_0}}+2^{(r_0-r_1) k}\|f-S_{2^k}(f)\|_{B_{p,\beta_1}^{r_1}}\right)\right]^q\right)^{\frac1q}
\\&\lesssim
\|f\|_{B_{p,q}^{r}} + \left(\sum_{k=0}^\infty 2^{-\theta(r_0-r_1)q  k}\left(\|S_{2^k}(f)\|_{B_{p,\tau}^{r_0}}+2^{(r_0-r_1) k}\|f-S_{2^k}(f)\|_{B_{p,\tau}^{r_1}}\right)^q\right)^{\frac1q}
\\&\lesssim
\|f\|_{B_{p,q}^{r}} + \left(\sum_{k=0}^\infty 2^{-\theta(r_0-r_1)q  k}\left\{\Big[\sum_{s=0}^k \big(2^{r_0s}\|S_{2^k}(f)-S_{2^s}(f)\|_{L_p}\big)^\tau\Big]^{1/\tau} \right.\right.
\\
&
\left.\left.
\qquad\qquad\;+\,
2^{(r_0-r_1)k}
\Big[\sum_{s=k+1}^\infty\big(2^{r_1s}\|f-S_{2^s}(f)\|_{B_{p,\tau}^{r_1}}\big)^\tau\Big]^{1/\tau}
\right\}^q\right)^{\frac1q}
\lesssim
 \|f\|_{B_{p,q}^{r}},
\end{align*}
where in the last estimate we have used Hardy's inequality.

Part (iii) follows from
$$
B_{p,1}^{r}\hookrightarrow H_{p}^{r} \hookrightarrow B_{p,\infty}^{r}
$$
and
$$
B_{p,q}^r=(B_{p,1}^{r_0},B_{p,1}^{r_1})_{\theta, q} \hookrightarrow(H_p^{r_0},H_p^{r_1})_{\theta, q} \hookrightarrow(B_{p,\infty}^{r_0},B_{p,\infty}^{r_1})_{\theta, q}=B_{p,q}^r;
$$
see Theorem \ref{THM:Besov} (6)--(7).

Let us finally prove (iv).
Since $B_{p,\min\left\{p,q\right\}}^{r}\hookrightarrow F_{p,q}^{r}\hookrightarrow B_{p,\max\left\{p,q\right\}}^{r},$
we have
\begin{eqnarray*}
B_{p,q}^r=(B_{p,\min\left\{p,\beta_0\right\}}^{r_0},B_{p,\min\left\{p,\beta_1\right\}}^{r_1})_{\theta, q} &\hookrightarrow&(F_{p,\beta_0}^{r_0},F_{p,\beta_1}^{r_1})_{\theta, q} \\&\hookrightarrow&(B_{p,\max\left\{p,\beta_0\right\}}^{r_0},B_{p,\max\left\{p,\beta_1\right\}}^{r_1})_{\theta, q}=B_{p,q}^r.
\end{eqnarray*}
The proof is complete.
\end{proof}
Above we have provided rather direct proofs of the interpolation theorems.
Another proof of such results could be also obtained using other methods, see, e.g.,
\cite{Bergh-Lofstrom:BK-interpolation-spaces-1976,Burenkov-Darbaeva-Nursultanov:2013}.

\section{Localisation of Besov spaces}
\label{SEC:localisations}

In this section we show that as a corollary of Theorem \ref{THM:interpolation}, (iii),
for certain ranges of indices,
the localisations of the Besov spaces \eqref{EQ:Besov1}-\eqref{EQ:Besov2}
coincide with the usual Besov spaces on $\Rn$, so that the norm
\eqref{EQ:Besov2} provides the global characterisation of Besov spaces defined
on the space $G/K$ considered as a smooth manifold. We note that for
Sobolev spaces such characterisation is much simpler and follows directly
from the elliptic regularity, see \eqref{EQ:Sobolev-norms} and the discussion
before it.

For $x,h\in\Rn$, let us denote
$$
\Delta_{h}^{m} f(x):=\sum_{k=0}^{m} C_{m}^{k} (-1)^{m-k} f(x+kh)
$$
and
$$
\omega_{p}^{m}(t,f):=\sup_{|h|\leq t}\|\Delta_{h}^{m}f\|_{L_{p}}.
$$
Then it is known that for $r>0$ and $1\leq p,q\leq\infty$,
the Besov space $B_{p,q}^{r}(\Rn)$ on $\Rn$ can be characterised by the
difference condition
\begin{equation}\label{EQ:Besov-Rn}
\|f\|_{B_{p,q}^{r}(\Rn)}\asymp \|f\|_{L_{p}}+\sum_{j=1}^{n}
\p{\int_{0}^{\infty} \p{t^{-r}\omega_{p}^{m}(t,f)}^{q}\frac{dt}{t}}^{1/q},
\end{equation}
with a natural modification for $q=\infty$, see, e.g.,
\cite[Prop. 1.18]{Wang-et-al:Bk-WS-2011}, or
\cite[(1.13)]{Triebel:Bk-Function-Spaces-III-2006} for a slightly different
expression. From \eqref{EQ:Besov-Rn}  we see that
for $r>0$ and $1\leq p,q\leq\infty$, the Besov spaces
$B_{p,q}^{r}(\Rn)$ are invariant under smooth changes of variables and can,
therefore, we defined on arbitrary smooth manifolds. Consequently,
the same is true for any $r<0$ by duality and, in fact, for any $r\leq 0$ by
the property $(1-\Delta)^{s/2}B_{p,q}^{r}=B_{p,q}^{r-s}.$

We say that such an extension
is the Besov space on the manifold defined by localisations.
For Sobolev spaces $H_{p}^{r}$ we can use the same terminology, and
the equivalence of norms \eqref{EQ:Sobolev-norms} says that
the Sobolev space $H_{p}^{r}$ on $G/K$ defined by localisations coincides with
the Sobolev space defined by the norm on the right hand side of \eqref{EQ:Sobolev-norms}.
We now formulate the analogue of this for Besov spaces.

\begin{thm}\label{COR:Besovs}
Let $G/K$ be the compact homogeneous space and let
$1<p<\infty$, $1\leq q\leq\infty$ and $r\in\R$. Then the Besov
space $B_{p,q}^{r}$ on $G/K$ defined by localisations coincides with
the Besov space $B_{p,q}^{r}(G/K)$ defined by
\eqref{EQ:Besov1}--\eqref{EQ:Besov2}, with the equivalence of norms.
\end{thm}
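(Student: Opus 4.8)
The plan is to realise both versions of $B_{p,q}^{r}$ as the \emph{same} real interpolation space of two Sobolev spaces, and then to use the already available fact that the global and the localised Sobolev scales coincide. Throughout, write $B_{p,q}^{r,\mathrm{loc}}$ for the Besov space on $G/K$ defined by localisations and $B_{p,q}^{r}=B_{p,q}^{r}(G/K)$ for the space given by \eqref{EQ:Besov1}--\eqref{EQ:Besov2}; similarly write $H_{p}^{r,\mathrm{loc}}$ and $H_{p}^{r}$ for the two Sobolev scales. By the equivalence of norms \eqref{EQ:Sobolev-norms}, which rests on the ellipticity of $(1-\Lap_{G/K})^{r/2}$, we already know that $H_{p}^{r,\mathrm{loc}}=H_{p}^{r}$ with equivalent norms for all $1<p<\infty$ and $r\in\R$.

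First I would treat the range $r>0$. Fix $0<r_{1}<r<r_{0}$ and $\theta\in(0,1)$ with $r=(1-\theta)r_{0}+\theta r_{1}$. On the one hand, Theorem \ref{THM:interpolation}(iii) gives
$$
B_{p,q}^{r}=(H_{p}^{r_{0}},H_{p}^{r_{1}})_{\theta,q}.
$$
On the other hand, the classical real-interpolation identity $(H_{p}^{r_{0}}(\Rn),H_{p}^{r_{1}}(\Rn))_{\theta,q}=B_{p,q}^{r}(\Rn)$ (see \cite{Bergh-Lofstrom:BK-interpolation-spaces-1976,Triebel:Bk-Function-Spaces-III-2006}) transfers to the manifold through a finite atlas and a subordinate partition of unity: the associated restriction and extension maps form a retraction/coretraction pair simultaneously for the Sobolev and for the Besov scales, so the interpolation functor commutes with localisation and yields $B_{p,q}^{r,\mathrm{loc}}=(H_{p}^{r_{0},\mathrm{loc}},H_{p}^{r_{1},\mathrm{loc}})_{\theta,q}$. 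Since the two interpolation couples agree by the Sobolev identity of the previous paragraph, the interpolation spaces coincide, and $B_{p,q}^{r}=B_{p,q}^{r,\mathrm{loc}}$ for $r>0$.

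To pass to arbitrary $r\in\R$ I would use the lifting operator $\Lambda_{s}:=(1-\Lap_{G/K})^{s/2}$. On the global side, inspecting \eqref{EQ:Besov2} and using that $\jp{\xi}^{s}\asymp 2^{js}$ on each dyadic block $2^{j}\le\jp{\xi}<2^{j+1}$ together with \eqref{EQ:NL1}, one checks that $\Lambda_{s}\colon B_{p,q}^{r}\to B_{p,q}^{r-s}$ is an isomorphism; here the only analytic input is the $L^{p}$-boundedness, uniform in $j$, of the dyadic block multiplier $\jp{\xi}^{s}2^{-js}$, which follows from a Mikhlin--type multiplier theorem on $G/K$. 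Since $\Lambda_{s}$ is a classical elliptic pseudo-differential operator, it is likewise an isomorphism $B_{p,q}^{r,\mathrm{loc}}\to B_{p,q}^{r-s,\mathrm{loc}}$, in accordance with the property $(1-\Delta)^{s/2}B_{p,q}^{r}=B_{p,q}^{r-s}$ recalled before the theorem. Given $r\le 0$, choose $s>0$ with $r+s>0$; then $\Lambda_{-s}\colon B_{p,q}^{r+s}\to B_{p,q}^{r}$ and $\Lambda_{-s}\colon B_{p,q}^{r+s,\mathrm{loc}}\to B_{p,q}^{r,\mathrm{loc}}$ are isomorphisms which, together with the identity $B_{p,q}^{r+s}=B_{p,q}^{r+s,\mathrm{loc}}$ already established for the positive order $r+s$, force $B_{p,q}^{r}=B_{p,q}^{r,\mathrm{loc}}$.

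The main obstacle I expect is the transfer of the classical interpolation identity to the manifold, that is, verifying that the localisation (retraction/coretraction) maps interpolate correctly for the Sobolev and Besov scales simultaneously, and the compatibility of the spectral lifting $\Lambda_{s}$ with the pseudo-differential lifting used to define the localised scale. The uniform $L^{p}$-boundedness of the block multiplier $\jp{\xi}^{s}2^{-js}$ is the concrete analytic point behind the latter, and it is precisely where the restriction $1<p<\infty$ enters.
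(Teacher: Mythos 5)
Your proposal is correct and follows essentially the same route as the paper: for $r>0$ you identify $B^{r}_{p,q}(G/K)$ with the real interpolation space $(H^{r_0}_p,H^{r_1}_p)_{\theta,q}$ via Theorem \ref{THM:interpolation}(iii), invoke the coincidence of the global and localised Sobolev scales from \eqref{EQ:Sobolev-norms}, and appeal to the classical interpolation identity for Sobolev/Besov spaces on $\Rn$; for $r\le 0$ you use the lifting $(1-\Lap_{G/K})^{s/2}$ on both scales, exactly as the paper does. Your write-up merely spells out more explicitly the retraction/coretraction and multiplier-theorem points that the paper leaves implicit.
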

\begin{proof}
Let first $r>0$.
By Theorem  \ref{THM:interpolation}, (iii),
we know that the Besov space $B_{p,q}^{r}(G/K)$ defined by
\eqref{EQ:Besov1}--\eqref{EQ:Besov2} is the interpolation space for
Sobolev spaces with the norms given on the right hand side of
\eqref{EQ:Sobolev-norms}.
Since Sobolev spaces with such norms coincide with their localisation,
the statement of Theorem \ref{COR:Besovs} follows from the
corresponding interpolation property of Sobolev spaces on $\Rn$,
see \cite{tribel}.
The statement for $r<0$ follows by duality or, in fact for any $r\leq 0$,
since the property $(1-\Delta)^{s/2}B_{p,q}^{r}=B_{p,q}^{r-s}$
holds for an elliptic operator $\Delta$ for both scales of spaces.
\end{proof}



\begin{thebibliography}{WHHG11}

\bibitem[BDN13]{Burenkov-Darbaeva-Nursultanov:2013}
V.~Burenkov, D.~Darbaeva, and E.~D. Nursultanov.
\newblock Description of interpolation spaces for general local {M}orrey-type
  spaces.
\newblock {\em Eurasian Math. J.}, 1(4):46--53, 2013.

\bibitem[Beu48]{Beurling:AM-synthesis}
A.~Beurling.
\newblock On the spectral synthesis of bounded functions.
\newblock {\em Acta Math.}, 81:14, 1948.

\bibitem[BL76]{Bergh-Lofstrom:BK-interpolation-spaces-1976}
J.~Bergh and J.~L{{\"o}}fstr{{\"o}}m.
\newblock {\em Interpolation spaces. {A}n introduction}.
\newblock Springer-Verlag, Berlin, 1976.
\newblock Grundlehren der Mathematischen Wissenschaften, No. 223.

\bibitem[BLT97]{bel}
E.~S. Belinskii, E.~R. Liflyand, and R.~M. Trigub.
\newblock The {B}anach algebra {$A^*$} and its properties.
\newblock {\em J. Fourier Anal. Appl.}, 3(2):103--129, 1997.

\bibitem[Dai06]{dai}
F.~Dai.
\newblock Multivariate polynomial inequalities with respect to doubling weights
  and {$A_\infty$} weights.
\newblock {\em J. Funct. Anal.}, 235(1):137--170, 2006.

\bibitem[DG75]{Duistermaat-Guillemin:IM-1975}
J.~J. Duistermaat and V.~W. Guillemin.
\newblock The spectrum of positive elliptic operators and periodic
  bicharacteristics.
\newblock {\em Invent. Math.}, 29(1):39--79, 1975.

\bibitem[DL93]{devore}
R.~A. DeVore and G.~G. Lorentz.
\newblock {\em Constructive approximation}, volume 303 of {\em Grundlehren der
  Mathematischen Wissenschaften [Fundamental Principles of Mathematical
  Sciences]}.
\newblock Springer-Verlag, Berlin, 1993.

\bibitem[DL05]{Dales-Lau:Beurling-MAMS-2005}
H.~G. Dales and A.~T.-M. Lau.
\newblock The second duals of {B}eurling algebras.
\newblock {\em Mem. Amer. Math. Soc.}, 177(836):vi+191, 2005.

\bibitem[DR13]{dr:gevrey}
A.~Dasgupta and M.~Ruzhansky.
\newblock Gevrey functions and ultradistributions on compact {L}ie groups and
  homogeneous spaces.
\newblock {\em (arXiv:1208.1883) Bull. Sci. Math.,
  http://dx.doi.org/10.1016/j.bulsci.2013.12.001}, 2013.

\bibitem[DT05]{ditzian}
Z.~Ditzian and S.~Tikhonov.
\newblock Ul'yanov and {N}ikol'ski\u\i-type inequalities.
\newblock {\em J. Approx. Theory}, 133(1):100--133, 2005.

\bibitem[DW13]{DW}
F.~Dai and H.~Wang.
\newblock Optimal {C}ubature {F}ormulas in {W}eighted {B}esov {S}paces with
  {$A_\infty$} {W}eights on {M}ultivariate {D}omains.
\newblock {\em Constr. Approx.}, 37(2):167--194, 2013.

\bibitem[DXar]{DX}
F.~Dai and Y.~Xu.
\newblock {\em Analysis on h-harmonics and Dunkl Transforms}.
\newblock Birkh\"{a}user Verlag, to appear.

\bibitem[Far08]{Faraut}
J.~Faraut.
\newblock {\em Analysis on {L}ie groups}, volume 110 of {\em Cambridge Studies
  in Advanced Mathematics}.
\newblock Cambridge University Press, Cambridge, 2008.
\newblock An introduction.

\bibitem[FMV06]{FMV}
G.~Furioli, C.~Melzi, and A.~Veneruso.
\newblock Littlewood-{P}aley decompositions and {B}esov spaces on {L}ie groups
  of polynomial growth.
\newblock {\em Math. Nachr.}, 279(9-10):1028--1040, 2006.

\bibitem[GR88]{Giacalone-Ricci:harmonic-projectors-cpt-Lie-gps-MA-1988}
E.~Giacalone and F.~Ricci.
\newblock Norms of harmonic projection operators on compact {L}ie groups.
\newblock {\em Math. Ann.}, 280(1):21--31, 1988.

\bibitem[Gra08]{Grafakos:Classical-Fourier-Analysis-2008}
L.~Grafakos.
\newblock {\em Classical {F}ourier analysis}, volume 249 of {\em Graduate Texts
  in Mathematics}.
\newblock Springer, New York, second edition, 2008.

\bibitem[GS12]{Gallagher-Sire:Besov-algebras-groups-polynomial-growth-SM-2012}
I.~Gallagher and Y.~Sire.
\newblock Besov algebras on {L}ie groups of polynomial growth.
\newblock {\em Studia Math.}, 212(2):119--139, 2012.

\bibitem[GST82]{Giulini-Soard-Travaglini:JFA-1982}
S.~Giulini, P.~M. Soardi, and G.~Travaglini.
\newblock Norms of characters and {F}ourier series on compact {L}ie groups.
\newblock {\em J. Funct. Anal.}, 46(1):88--101, 1982.

\bibitem[HMS07]{hesse}
K.~Hesse, H.~N. Mhaskar, and I.~H. Sloan.
\newblock Quadrature in {B}esov spaces on the {E}uclidean sphere.
\newblock {\em J. Complexity}, 23(4-6):528--552, 2007.

\bibitem[Kah70]{kahane}
J.-P. Kahane.
\newblock {\em S{\'e}ries de {F}ourier absolument convergentes}.
\newblock Ergebnisse der Mathematik und ihrer Grenzgebiete, Band 50.
  Springer-Verlag, Berlin, 1970.

\bibitem[Kam84]{Kamzolov:sphere-1984}
A.~I. Kamzolov.
\newblock Approximation of functions on the sphere {$S^n$}.
\newblock {\em Serdica}, 10(1):3--10, 1984.

\bibitem[LS12]{Lee-Samei:Beurling-JFA-2012}
H.~H. Lee and E.~Samei.
\newblock Beurling-{F}ourier algebras, operator amenability and {A}rens
  regularity.
\newblock {\em J. Funct. Anal.}, 262(1):167--209, 2012.

\bibitem[LST12]{Ludwig-Spronk-Turowska:Beurling-Fourier-JFA-2012}
J.~Ludwig, N.~Spronk, and L.~Turowska.
\newblock Beurling-{F}ourier algebras on compact groups: spectral theory.
\newblock {\em J. Funct. Anal.}, 262(2):463--499, 2012.

\bibitem[MNW99]{mhaskar}
H.~N. Mhaskar, F.~J. Narcowich, and J.~D. Ward.
\newblock Approximation properties of zonal function networks using scattered
  data on the sphere.
\newblock {\em Adv. Comput. Math.}, 11(2-3):121--137, 1999.
\newblock Radial basis functions and their applications.

\bibitem[Nik51]{nikol}
S.~M. Nikolsky.
\newblock Inequalities for entire functions of finite degree and their
  applications in the theory of differentiable functions of many variables.
\newblock {\em Proc. Steklov Math. Inst.}, 38:244--278, 1951.

\bibitem[Nik75]{nikol1}
S.~M. Nikol'ski{\u\i}.
\newblock {\em Approximation of functions of several variables and imbedding
  theorems}.
\newblock Springer-Verlag, New York, 1975.
\newblock Translated from the Russian by John M. Danskin, Jr., Die Grundlehren
  der Mathematischen Wissenschaften, Band 205.

\bibitem[Nur06]{Nursultanov:Steklov-2006}
E.~D. Nursultanov.
\newblock S. {M}. {N}ikol'ski\u\i's inequality for different metrics, and
  properties of the sequence of norms of {F}ourier sums of a function in the
  {L}orentz space.
\newblock {\em Tr. Mat. Inst. Steklova}, 255(Funkts. Prostran., Teor.
  Priblizh., Nelinein. Anal.):197--215, 2006.

\bibitem[NW78]{nessel}
R.~J. Nessel and G.~Wilmes.
\newblock Nikolskii-type inequalities for trigonometric polynomials and entire
  functions of exponential type.
\newblock {\em J. Austral. Math. Soc. Ser. A}, 25(1):7--18, 1978.

\bibitem[Pee76]{Peetre:bk-New-thoughts-Besov-1976}
J.~Peetre.
\newblock {\em New thoughts on {B}esov spaces}.
\newblock Mathematics Department, Duke University, Durham, N.C., 1976.
\newblock Duke University Mathematics Series, No. 1.

\bibitem[Pes08]{Pesenson:Besov-2008}
I.~Pesenson.
\newblock Bernstein-{N}ikolskii inequalities and {R}iesz interpolation formula
  on compact homogeneous manifolds.
\newblock {\em J. Approx. Theory}, 150(2):175--198, 2008.

\bibitem[Pes09]{pes}
I.~Pesenson.
\newblock Bernstein-{N}ikolskii and {P}lancherel-{P}olya inequalities in
  {$L_p$}-norms on non-compact symmetric spaces.
\newblock {\em Math. Nachr.}, 282(2):253--269, 2009.

\bibitem[RT10]{rt:book}
M.~Ruzhansky and V.~Turunen.
\newblock {\em Pseudo-differential operators and symmetries. Background
  analysis and advanced topics}, volume~2 of {\em Pseudo-Differential
  Operators. Theory and Applications}.
\newblock Birkh{\"a}user Verlag, Basel, 2010.

\bibitem[RT13]{rt:groups}
M.~Ruzhansky and V.~Turunen.
\newblock Global quantization of pseudo-differential operators on compact {L}ie
  groups, {SU}(2), 3-sphere, and homogeneous spaces.
\newblock {\em Int Math Res Notices}, 2013(11):2439--2496, 2013.

\bibitem[See67]{Seeley}
R.~T. Seeley.
\newblock Complex powers of an elliptic operator.
\newblock In {\em Singular {I}ntegrals ({P}roc. {S}ympos. {P}ure {M}ath.,
  {C}hicago, {I}ll., 1966)}, pages 288--307. Amer. Math. Soc., Providence,
  R.I., 1967.

\bibitem[Shu01]{shubin:r}
M.~A. Shubin.
\newblock {\em Pseudodifferential operators and spectral theory}.
\newblock Springer-Verlag, Berlin, second edition, 2001.
\newblock Translated from the 1978 Russian original by Stig I. Andersson.

\bibitem[Skr02]{Skr}
L.~Skrzypczak.
\newblock Besov spaces and {H}ausdorff dimension for some
  {C}arnot-{C}arath{\'e}odory metric spaces.
\newblock {\em Canad. J. Math.}, 54(6):1280--1304, 2002.

\bibitem[Sog86]{Sogge:Oscillatory-intergrals-spherical-harmonics-DMJ-1986}
C.~D. Sogge.
\newblock Oscillatory integrals and spherical harmonics.
\newblock {\em Duke Math. J.}, 53(1):43--65, 1986.

\bibitem[SW71]{stein-weiss}
E.~M. Stein and G.~Weiss.
\newblock {\em Introduction to {F}ourier analysis on {E}uclidean spaces}.
\newblock Princeton University Press, Princeton, N.J., 1971.
\newblock Princeton Mathematical Series, No. 32.

\bibitem[TB04]{trigub}
R.~M. Trigub and E.~S. Bellinsky.
\newblock {\em Fourier analysis and approximation of functions}.
\newblock Kluwer Academic Publishers, Dordrecht, 2004.

\bibitem[Tri83]{tribel}
H.~Triebel.
\newblock {\em Theory of function spaces}, volume~78 of {\em Monographs in
  Mathematics}.
\newblock Birkh{\"a}user Verlag, Basel, 1983.

\bibitem[Tri06]{Triebel:Bk-Function-Spaces-III-2006}
H.~Triebel.
\newblock {\em Theory of function spaces. {III}}, volume 100 of {\em Monographs
  in Mathematics}.
\newblock Birkh{\"a}user Verlag, Basel, 2006.

\bibitem[Vil68]{Vilenkin:BK-eng-1968}
N.~J. Vilenkin.
\newblock {\em Special functions and the theory of group representations}.
\newblock Translated from the Russian by V. N. Singh. Translations of
  Mathematical Monographs, Vol. 22. American Mathematical Society, Providence,
  R. I., 1968.

\bibitem[VK91]{Klimyk-Vilenkin:Vol-1}
N.~J. Vilenkin and A.~U. Klimyk.
\newblock {\em Representation of {L}ie groups and special functions. {V}ol. 1},
  volume~72 of {\em Mathematics and its Applications (Soviet Series)}.
\newblock Kluwer Academic Publishers Group, Dordrecht, 1991.
\newblock Simplest Lie groups, special functions and integral transforms,
  Translated from the Russian by V. A. Groza and A. A. Groza.

\bibitem[WHHG11]{Wang-et-al:Bk-WS-2011}
B.~Wang, Z.~Huo, C.~Hao, and Z.~Guo.
\newblock {\em Harmonic analysis method for nonlinear evolution equations.
  {I}}.
\newblock World Scientific Publishing Co. Pte. Ltd., Hackensack, NJ, 2011.

\end{thebibliography}

\end{document}